\newcommand{\be}{\begin{equation}}
\newcommand{\ee}{\end{equation}}
\newcommand{\G}{\mathbb{G}}
\newcommand{\N}{\mathbb{N}}
\newcommand{\Z}{\mathbb{Z}}
\newcommand{\C}{\mathbb{C}}
\newcommand{\R}{\mathbb{R}}
\newcommand{\s}{s}
\newtheorem{theorem}{Theorem} [section]
\newtheorem{corollary}[theorem]{Corollary}
\newtheorem{proposition}[theorem]{Proposition}
\newtheorem{remark}[theorem]{Remark}
\newtheorem{lemma}[theorem]{Lemma}
\newtheorem{definition}[theorem]{Definition}
\renewcommand{\Re}{\mathrm{Re}}
\begin{document}

\title[Characterizations of Sobolev spaces] {Mean Value Inequalities and Characterizations of Sobolev spaces on graded groups}

\author {Pablo De N\'apoli}
\address{IMAS (UBA-CONICET) and Departamento de Matem\'atica\\
	Facultad de Ciencias Exactas y Naturales - Universidad de Buenos Aires\\
	Ciudad Universitaria - 1428 Buenos Aires, Argentina.}
\email{pdenapo@dm.uba.ar}

\author{Rocío Díaz Martín}
\address{Department of Mathematics, Vanderbilt University, Nashville, Tennessee, USA. }
\email{rocio.p.diaz.martin@vanderbilt.edu}
\thanks{The first author was supported by ANPCyT under grant PICT-2018-03017, and by Universidad de
	Buenos Aires under grant 20020160100002BA. He is a member of
	CONICET, Argentina. The second author was primarily supported by a CONICET postdoctoral fellowship. 
	This project has mainly been carried out while she was a postdoctoral researcher at IAM-CONICET, a Graduate Teaching Assistant at Facultad de Ciencias Exactas y Naturales – Universidad de Buenos Aires, and an Assistant Professor at Facultad de Matem\'atica, Astronom\'ia, F\'isica y Computaci\'on (FaMAF)– Universidad Nacional de C\'ordoba, Argentina. She finished this article as a Postdoctoral Research Scholar at Vanderbilt University. The authors thank those institutions for their support.}

\subjclass[2020]{43A15, 43A80}

\keywords{Graded groups, Sobolev spaces, Rockland operator, Littlewood-Paley g-function.}

\begin{abstract}
    We develop characterizations for Sobolev spaces of potential type on graded Lie groups, by means of Littlewood-Paley square functions, and  Strichartz functionals involving second-order differences. A key role is played by some mean value inequalities that may be of independent interest. Indeed, the point-wise first-order inequality is generalized in two directions: on $L^p$-spaces and by considering second-order differences.
\end{abstract}

\maketitle

\section{Introduction}

Functional spaces on Lie groups have been intensively studied in the literature, since the
pioneering works of G. Folland \cite{Folland} and his joint work with E. M. Stein \cite{Folland-Stein}.
In particular, Folland introduced in \cite{Folland} Sobolev spaces of potential type in the setting
of stratified (nilpotent) Lie groups. Recently, V. Fischer and M.
Ruzhansky in \cite{FR-Sobolev} generalized the definition
to the setting of nilpotent graded Lie groups.

Let $\G$ be a homogeneous Lie group\footnote{We refer the reader to Section \ref{section-pelim} for a survey of the basic 
definitions and notations that we use.} and let $\mathfrak{g}$ be its Lie algebra. 
There exist various ways of defining Sobolev spaces on $\G$, which naturally extend the corresponding definitions in the Euclidean case. Let $1 \leq p<\infty$, 
on the one hand, 
one can consider Sobolev spaces defined by means of the vector fields of a Jacobian basis $\{X_1,\dots,X_n\}$ of $\mathfrak{g}$, that is, for $k \in \N$, the Banach space
$$ W^{k,p}(\G) = \left \{ f:\G\to\C \mid \,  X_1^{\alpha_1}\dots X_n^{\alpha_n} f \in L^p(\G) \, \; \forall \alpha \in\N_0^n, | \alpha|_{\G}  \leq k \right\}, $$
with the norm
$$ \| f \|_{W^{k,p}(\G)} = \left( \sum_{|\alpha|_{\G} \leq k}  \| X_1^{\alpha_1}\dots X_n^{\alpha_n} f \|_{L^p(\G)}^p \right)^{1/p}, $$
where $X_jf$ is understood in the weak sense; or, as in \cite{burns1991sobolev},
$$ \mathbb{W}_{p;k}(\G) = \left \{ f:\G\to\C \mid \, X_1^{\alpha_1}\dots X_n^{\alpha_n} f \in L^p(\G) \, \; \forall \alpha \in\N_0^n, | \alpha|  \leq k\right\}, $$
with the norm
$$ \| f \|_{\mathbb{W}_{p;k}(\G)} =\left( \sum_{|\alpha| \leq k}  \| X_1^{\alpha_1}\dots X_n^{\alpha_n} f \|_{L^p(\G)}^p\right)^{1/p} .$$
On the other hand, if $\G$ is moreover a graded group and we fix a positive Rockland operator $\mathcal{R}$ of homogeneous degree $\nu$ on $\G$, it possible to consider (inhomogeneous)
Sobolev spaces of potential type $L^p_\s(\G)$ for real $s>0$ which may be defined as the domain of the fractional powers $(I+ \mathcal{R}_p)^{s/\nu}$
and are equipped with the Sobolev norm
\begin{equation}\label{norm_sob_pot}
    \| f \|_{L^p_\s(\G)} =   \|  (I+\mathcal{R}_p)^{\s/\nu}f \|_{L^p(\G)} ,
\end{equation}
where $\mathcal{R}_p$ denotes the realization of $\mathcal{R}$ in $L^p(\G)$ (see \cite [Theorem 4.3]{FR-Sobolev}).

In the Euclidean context, the spaces $W^{k,p}(\R^n)=\mathbb{W}_{p;k}(\R^n)$ are extensively used in the analysis of partial differential equations, see for instance \cite{Brezis}. On the other side, the classical potential spaces $L^p_s(\R^n)$, which correspond to take  
 $\mathcal{R}$ as the (minus) Laplacian operator $-\Delta$ 
(with $\nu=2$) on $\R^n$, were 
 introduced by N. Aronszajn and K. T. Smith 
in \cite{AS}, and also studied by A. Calderón in \cite{Calderon}.

\medskip

In this work, we develop some characterizations of these Sobolev spaces extending the classical results to 
the context of homogeneous and graded Lie groups. Our proofs are based on \textit{mean value inequalities} 
for these groups. 

\medskip




In this work, we mainly focus on extending classical results on Sobolev spaces of potential type to 
the context of graded Lie groups.
By analyzing \textit{mean value inequalities} 
for these groups, we study two different kinds of characterizations:
\begin{itemize}
\item[(i)] By Littlewood-Paley-Stein square functions (also called in the literature $g$--functions) associated with the (heat) semigroup generated by $\mathcal{R}$.
\item[(ii)] By certain functional involving first and second order differences introduced (in the Euclidean context) by  R. Strichartz in \cite{Strichartz}.
\end{itemize}

Regarding (i), the characterization of functional spaces by means of square functions is a well known tool in Harmonic Analysis. For any $\alpha \in \C$ with $\Re(\alpha)>0$, we consider the Littlewood-Paley-Stein $g$--function of order $\alpha$,
\begin{equation}\label{def_g_alpha}
   g_\alpha(f) = \left( \int_0^\infty |(t\mathcal{R})^\alpha T_t f|^2 \frac{dt}{t}  \right)^{1/2}, 
\end{equation}
where $T_t$ is the (heat) semigroup generated by $\mathcal{R}$, as defined by S. Meda in \cite{Meda}. See also \cite{Cowling}.
We will prove the following basic characterization of the Lebesgue spaces $L^p(\G)$ in terms of $g_\alpha$:
\begin{theorem}\label{g_equiv_norm}
	Let $\G$ be a graded group. For any complex number $\alpha$ with $\Re(\alpha)>0$ and $1<p<\infty$ we have that 
	$$ \| g_\alpha(f) \|_{L^p(\G)} \approx \| f \|_{L^p(\G)} $$
for every function $f\in L^p(\G)$. \footnote{Throughout this paper, 
when $\|f\|\leq C\|g\|$ for some positive constant $C$ we will use the notation $\|f\|\lesssim \|g\|$, and $\|f\|\approx \|g\|$ when the norms of $f$ and $g$ are equivalent.}
\end{theorem}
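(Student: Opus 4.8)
The plan is to regard $g_\alpha$ as (the $L^2(dt/t)$-norm of) a vector-valued singular integral operator and to combine a Plancherel-type identity on $L^2(\G)$ with Calderón–Zygmund theory on the space of homogeneous type $(\G,|\cdot|,dx)$, closing the lower bound by duality. First I would establish the $L^2$ identity. Since $\mathcal{R}$ is positive and self-adjoint on $L^2(\G)$, the spectral theorem gives a resolution $\mathcal{R}=\int_0^\infty\lambda\,dE_\lambda$ with $T_t=\int_0^\infty e^{-t\lambda}\,dE_\lambda$, so $(t\mathcal{R})^\alpha T_tf=\int_0^\infty(t\lambda)^\alpha e^{-t\lambda}\,dE_\lambda f$, and Plancherel together with Tonelli yields
\[
\|g_\alpha(f)\|_{L^2(\G)}^2=\int_0^\infty\int_0^\infty (t\lambda)^{2\Re\alpha}e^{-2t\lambda}\,d\|E_\lambda f\|^2\,\frac{dt}{t}.
\]
After the substitution $u=t\lambda$ the inner $t$-integral equals the finite constant $2^{-2\Re\alpha}\Gamma(2\Re\alpha)$ (finite because $\Re\alpha>0$); since $0$ is not an eigenvalue of $\mathcal{R}$ on $L^2(\G)$ (the group is non-compact, so there are no nonzero $L^2$ heat-invariant functions) the spectral mass at the origin is null, and we obtain $\|g_\alpha(f)\|_{L^2(\G)}^2=c_\alpha\|f\|_{L^2(\G)}^2$. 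This settles $p=2$ and, crucially, gives the $L^2\to L^2(\G;L^2(dt/t))$ boundedness of the linearized operator.

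Next I would pass to general $1<p<\infty$. Writing $T_tf=f*h_t$ with $h_t$ the Rockland heat kernel, the operator $(t\mathcal{R})^\alpha T_t$ is convolution against $k_t^{(\alpha)}=(t\mathcal{R})^\alpha h_t$, which by the homogeneity of $\mathcal{R}$ of degree $\nu$ satisfies $k_t^{(\alpha)}(x)=t^{-Q/\nu}\psi_\alpha(\delta_{t^{-1/\nu}}x)$ for a fixed rapidly decreasing smooth function $\psi_\alpha=\mathcal{R}^\alpha h_1$ with $\int_\G\psi_\alpha=0$ (the operator annihilates constants). Thus $g_\alpha(f)$ is the Littlewood–Paley $g$-function built from the dilated mean-zero bumps $\psi_{\alpha,t}$, i.e. the $L^2(dt/t)$-norm of the vector-valued convolution $f\mapsto(f*\psi_{\alpha,t})_{t>0}$. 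I would then verify that the associated $L^2(dt/t)$-valued convolution kernel obeys the size estimate and the Hörmander smoothness condition of Calderón–Zygmund theory on the doubling group $\G$; these follow from the scaling relation above combined with the smoothness and decay of $\psi_\alpha$ and its left-invariant derivatives. Together with the $L^2$ bound of the first step, the vector-valued Calderón–Zygmund theorem gives $\|g_\alpha(f)\|_{L^p(\G)}\lesssim\|f\|_{L^p(\G)}$ for all $1<p<\infty$.

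Finally, the reverse inequality follows by polarization. For any $\beta$ with $\Re\beta>0$, the spectral calculus produces the reproducing identity
\[
\langle f,h\rangle=c_{\alpha,\beta}^{-1}\int_0^\infty\bigl\langle (t\mathcal{R})^\alpha T_tf,\,(t\mathcal{R})^\beta T_th\bigr\rangle\,\frac{dt}{t},\qquad c_{\alpha,\beta}=2^{-(\alpha+\bar\beta)}\Gamma(\alpha+\bar\beta),
\]
valid for $f\in L^p(\G)$ and $h\in L^{p'}(\G)$ because $T_tf\to f$ as $t\to0$ and $T_tf\to0$ as $t\to\infty$ in each $L^p$, $1<p<\infty$. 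Applying Cauchy–Schwarz in $t$, then Hölder in $x$, and finally the forward bound of the second step at the exponent $p'$ gives $|\langle f,h\rangle|\lesssim\|g_\alpha(f)\|_{L^p(\G)}\,\|h\|_{L^{p'}(\G)}$; taking the supremum over $\|h\|_{L^{p'}(\G)}=1$ yields $\|f\|_{L^p(\G)}\lesssim\|g_\alpha(f)\|_{L^p(\G)}$. I expect the main obstacle to be the second step, namely establishing the vector-valued Hörmander conditions for the kernel of $(t\mathcal{R})^\alpha T_t$ uniformly in $t$; this is precisely where the fine size and regularity estimates for the Rockland heat kernel on graded groups, and the cancellation $\int_\G\psi_\alpha=0$, must be exploited.
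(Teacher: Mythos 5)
Your outline coincides with the paper's own strategy at every stage: the spectral-theorem isometry on $L^2(\G)$ (the paper's Lemma \ref{usando_spectral_th}), the identification of $g_\alpha$ with the square function $g_{\phi}$ built from the dilates of the fixed profile $\phi_\alpha=\mathcal{R}^\alpha h_1$ (the paper's property (P1) in Lemma \ref{phi_properties}), a vector-valued Calder\'on--Zygmund theorem on the homogeneous group (the paper's Theorem \ref{thm-KV}, after Kor\'anyi--V\'agi), and a duality/polarization argument for the reverse bound (which the paper delegates to Journ\'e). So this is the same proof in outline, and the outline is correct.

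However, there is a genuine gap exactly at the point you yourself flag as ``the main obstacle,'' and it is aggravated by a false assertion. You describe $\psi_\alpha=\mathcal{R}^\alpha h_1$ as ``a fixed rapidly decreasing smooth function'' and claim the size and H\"ormander conditions ``follow from the scaling relation \ldots combined with the smoothness and decay of $\psi_\alpha$.'' But for non-integer $\alpha$ the operator $\mathcal{R}^\alpha$ is nonlocal, and $\mathcal{R}^\alpha h_1$ is \emph{not} rapidly decreasing: already in the Euclidean model, $(-\Delta)^\alpha e^{-|x|^2/4}$ decays only like $|x|^{-n-2\alpha}$ at infinity. Establishing the correct (merely polynomial, barely integrable) bounds
\begin{equation*}
|\phi_\alpha(x)|\lesssim (1+|x|)^{-Q-\Re(\alpha)\nu},\qquad
|\mathrm{X}^J\phi_\alpha(x)|\lesssim (1+|x|)^{-Q-\Re(\alpha)\nu-|J|},
\end{equation*}
is the core technical content of the paper's proof, and it requires an idea your proposal does not supply: the Balakrishnan-type representation
\begin{equation*}
\phi_\alpha(x)=\frac{(-1)^m}{\Gamma(m-\alpha)}\int_1^\infty \partial_t^m h_t(x)\,(t-1)^{m-1-\alpha}\,dt
\qquad (m>\Re(\alpha)),
\end{equation*}
proved by induction, integration by parts and analytic continuation in $\alpha$, and then combined with the Gaussian-type bounds of Theorem \ref{theorem-DHZ} on $\partial_t^m \mathrm{X}^\beta h_t$. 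Moreover, even granted these pointwise bounds, passing to the H\"ormander-type integral condition requires one more ingredient you do not mention, namely the $L^1$ mean value inequality of Theorem \ref{Lp_mean_value}. Without these two steps your Calder\'on--Zygmund argument cannot be closed, so what you have is a correct skeleton of the paper's proof with its hardest bone missing. (Two minor positives: your $L^2$ constant $2^{-2\Re\alpha}\Gamma(2\Re\alpha)$ is actually more careful than the paper's, and your remark that the spectral measure has no atom at $\lambda=0$ addresses a point the paper glosses over.)
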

From this result, we will deduce the following characterization of the potential spaces $L^p_s(\G)$ by square functions, which is one of the main results of 
this article.

\begin{theorem}\label{G_s}
Let $\G$ be a graded group. For $f \in L^p(\G)$, $1<p<\infty$, and $0<s<\nu$, consider the square function
	$$   
G_s(f) = \left( \int_0^\infty t^{1-2s/\nu}  \left| \frac{\partial T_t f}{\partial t}(x) \right|^2 \; dt
\right)^{1/2}.  $$
Then, $f \in L^p_s(\G)$ if and only if $G_s(f) \in L^p(\G)$. 	Furthermore,
	$$ \|f \|_{L^p_s(\G)} \approx \| f \|_{L^p(\G)} +  \| G_s(f) \|_{L^p(\G)}. $$
\end{theorem}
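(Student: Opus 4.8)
The plan is to reduce the statement to Theorem \ref{g_equiv_norm} by recognizing $G_s(f)$ as a Littlewood--Paley--Stein $g$-function of a suitable \emph{fractional} order, evaluated at $\mathcal{R}^{s/\nu}f$. Since $T_t=e^{-t\mathcal{R}}$ is the semigroup generated by $\mathcal{R}$, differentiating gives $\partial_t T_t f=-\mathcal{R}T_t f$, so the integrand of $G_s$ is governed by $\mathcal{R}T_t f$. First I would fix the exponent $\beta=1-s/\nu$ and note that the hypothesis $0<s<\nu$ is precisely what guarantees $\Re(\beta)>0$, the range in which Theorem \ref{g_equiv_norm} applies.

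The computational heart is the identity $G_s(f)=g_\beta(\mathcal{R}^{s/\nu}f)$. Using that the fractional powers $\mathcal{R}^{s/\nu}$ and $\mathcal{R}^{\beta}$ commute with $T_t$ through the functional calculus of the positive Rockland operator $\mathcal{R}$, I would write
$$ \frac{\partial T_t f}{\partial t} = -\mathcal{R}T_t f = -\mathcal{R}^{\beta}T_t\left( \mathcal{R}^{s/\nu}f\right), \qquad \beta = 1-\frac{s}{\nu}. $$
Setting $h=\mathcal{R}^{s/\nu}f$ and substituting into the definition of $G_s$, the weight matches up exactly, since $1-2s/\nu=2\beta-1$:
$$ t^{1-2s/\nu}\left|\frac{\partial T_t f}{\partial t}\right|^2 = t^{2\beta}\,\bigl|\mathcal{R}^{\beta}T_t h\bigr|^2\,\frac{1}{t} = \bigl|(t\mathcal{R})^{\beta}T_t h\bigr|^2\,\frac{1}{t}, $$
so that integrating against $dt/t$ yields $G_s(f)=g_\beta(h)$. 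These manipulations with unbounded operators I would first justify on a dense class (Schwartz functions, or the domain of the relevant power of $\mathcal{R}$) and then extend by density.

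Applying Theorem \ref{g_equiv_norm} with $\alpha=\beta$ then gives $\|G_s(f)\|_{L^p(\G)}=\|g_\beta(h)\|_{L^p(\G)}\approx\|h\|_{L^p(\G)}=\|\mathcal{R}^{s/\nu}f\|_{L^p(\G)}$. To pass from this homogeneous quantity to the inhomogeneous Sobolev norm $\|f\|_{L^p_s}=\|(I+\mathcal{R}_p)^{s/\nu}f\|_{L^p}$, I would invoke the equivalence
$$ \|(I+\mathcal{R}_p)^{s/\nu}f\|_{L^p(\G)} \approx \|f\|_{L^p(\G)} + \|\mathcal{R}^{s/\nu}f\|_{L^p(\G)}. $$
Combining the last two displays yields $\|f\|_{L^p_s(\G)}\approx\|f\|_{L^p(\G)}+\|G_s(f)\|_{L^p(\G)}$, and the biconditional $f\in L^p_s(\G)\iff G_s(f)\in L^p(\G)$ (for $f\in L^p(\G)$) follows at once.

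I expect the last equivalence to be the main obstacle: it compares the inhomogeneous norm with the sum of the $L^p$-norm and the homogeneous norm, and is naturally established by showing that the spectral multiplier $\lambda\mapsto(1+\lambda)^{s/\nu}/(1+\lambda^{s/\nu})$ and its reciprocal are $L^p$-bounded, via a Mihlin--Hörmander type multiplier theorem for the Rockland operator $\mathcal{R}$ (valid for $1<p<\infty$). The other delicate point is the rigorous handling of domains underlying the identity $G_s(f)=g_\beta(\mathcal{R}^{s/\nu}f)$, since for a general $f\in L^p(\G)$ the element $\mathcal{R}^{s/\nu}f$ must be interpreted in the homogeneous Sobolev sense, which is exactly why the density reduction is needed before passing to the limit.
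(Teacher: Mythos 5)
Your proposal is correct and follows essentially the same route as the paper: both choose $\alpha=1-s/\nu$, identify $G_s(f)=g_\alpha\bigl((\mathcal{R}_p)^{s/\nu}f\bigr)$ through the functional calculus and the heat equation, and then conclude by applying Theorem \ref{g_equiv_norm} together with the norm equivalence for $L^p_s(\G)$. The one step you flag as the main obstacle, namely $\| f \|_{L^p_s(\G)} \approx \| f \|_{L^p(\G)} + \| (\mathcal{R}_p)^{s/\nu} f \|_{L^p(\G)}$, is not proved in the paper either but simply quoted from \cite[Theorem 4.3]{FR-Sobolev} (see \eqref{Theorem 4.3 FR-book}), so no separate multiplier-theorem argument is required.
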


For the case of the sublaplacian in a stratified Lie group, the semigroup $T_t$ is a symmetric 
diffusion semigroup in $L^p(\G)$. Then, as observed in \cite{CRTN}, this result follows from the results of 
S. Meda \cite{Meda} that generalize the previous results of E. Stein \cite{Stein-topics} (for the case 
of $\alpha$ being an integer). 
However, as noticed by Fischer and Ruzhansky in \cite{FR-Sobolev}, for a general 
Rockland operator in a graded Lie group, the assumptions of S. Meda do not hold as the semigroup 
generated by $\mathcal{R}$ is not contractive in $L^p(\G)$.
For this reason, we follow a direct different approach based on showing that $g_\alpha$ coincides 
with the square function $g_\phi$ associated to an approximation of the identity (given by \eqref{g-phi} 
below) for a specific choice of $\phi$. To our knowledge, this approach is new even in the setting of 
Euclidean spaces.

We remark that in \cite[Theorem 2.2]{Cardona-Ruzhansky} a different  characterization of 
$L^p(\G)$ is given using a dyadic Littlewood-Paley partition of the unit. In turn, this theorem depends on 
the generalization of the Mikhlin-Hörmander multiplier theorem in \cite{fischer2014fourier}. A similar result in the setting of 
Lie groups with polynomial growth (endowed with a system of vector fields satisfying the Hörmander condition and the associated sub-Laplacian) was given in \cite{furioli2006littlewood}. For more information on this setting see \cite{VSCC}). In general, our setting is different since the Rockland operator $\mathcal{R}$ may not be of second order. Carnot groups (like the Heisenberg group) are included in both settings.

\medskip

As for (ii), in \cite{Strichartz} R. Strichartz  gave an intrinsic characterization of the classical potential spaces $L^p_\s$ on $\R^n$ 
by using  the functional
$$ S_\s f(x) = \left( \int_{0}^{\infty} \left[ \int_{|y|<1} |f(x+ry) - f(x) | \; dy \right]^2 \; \frac{dr}{r^{1+2\s}} \right)^{1/2} $$ 
and its analog with second-order differences
$$  S^{(2)}_\s f(x) = \left( \int_{0}^{\infty} \left[ \int_{|y|<1} |f(x+ry)+ f(x-ry) - 2f(x)|  dy \right]^2  \frac{dr}{r^{1+2\s}} \right)^{1/2}.$$ Indeed, for $1<p<\infty$ and $0<\s<1$, the functional $S_\s$ characterizes the potential space $L^p_\s(\R^n)$ in the sense 
that $f\in L^p(\R^n)$ verifies that $f\in L^p_\s(\R^n)$ if and only if $S_\s f \in L^p(\R^n)$, and 
\begin{equation} \label{stric_in euclud}
   \| f \|_{L^p_\s(\R^n)}  \approx \| f \|_{L^p(\R^n)} +  \| S_\s f \|_{L^p(\R^n)}.  
\end{equation}
The same property holds for $S^{(2)}_\s$ in the range $0<\s<2$ (see \cite[Section 2.3]{Strichartz}).

The first assertion in  Strichartz's theorem (i.e. \eqref{stric_in euclud}) was later generalized in \cite{CRTN} to the setting of unimodular Lie groups
endowed with a system of vector fields satisfying the Hörmander condition 
(see \cite[Theorem 5]{CRTN}), 
and also to Riemannian manifolds. {Also, in the case of stratified groups, it was first studied in \cite{Bohnke} using 
a different technique.} 

We will prove Strichartz's characterization of Sobolev potential spaces in the context of graded Lie groups.

\begin{theorem}\label{characterization Strichartz}
Let $\G$ be a graded group.
Consider the functionals 
$$ S_\s f(x) = \left( \int_{0}^{\infty} \left[ \frac{1}{r^{\s+Q}} \int_{|y|\leq r} |f(x\cdot y)-f(x)| \; dy \right]^2 \; \frac{dr}{r} \right)^{1/2} $$
and 
\begin{equation}\label{S_\s^{(2)}}
   S^{(2)}_\s f(x) = \left( \int_{0}^{\infty} \left[ \frac{1}{r^{\s+Q}} \int_{|y|\leq r} |\Delta^2_y f(x)| \; dy \right]^2 \; \frac{dr}{r} \right)^{1/2}, 
\end{equation}
where $\Delta^2_y f(x)$ is the second order difference 
\begin{equation}\label{Delta^2_y}
  \Delta^2_y f(x)=f(x\cdot y)+f(x\cdot y^{-1})-2f(x).  
\end{equation}
Then, for every $1<p<\infty$, 
\begin{enumerate}
    \item[(i)] if $\s\in (0,1)$, $f\in L^p_\s(\G)$ if and only if $S_\s f \in L^p(\G)$,  and  
\begin{equation}\label{scrtich_estim_1}
    \| f \|_{L^p_s(\G)} \approx \| f \|_{L^p(\G)} + \| S_s f \|_{L^p(\G)};   
\end{equation}    
    \item[(ii)] if $\s\in (0,2)$, $f\in L^p_\s(\G)$ if and only if $S_\s^{(2)} f \in L^p(\G)$, and  
\begin{equation}\label{scrtich_estim_2}
    \| f \|_{L^p_s(\G)} \approx \| f \|_{L^p(\G)} + \| S_s^{(2)} f \|_{L^p(\G)} .   
\end{equation} 
\end{enumerate}
\end{theorem}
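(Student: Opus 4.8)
\emph{Strategy.} The plan is to reduce Theorem~\ref{characterization Strichartz} to the square-function characterization already proved in Theorem~\ref{G_s}. Since that theorem gives $\|f\|_{L^p_s(\G)}\approx \|f\|_{L^p(\G)}+\|G_s(f)\|_{L^p(\G)}$, it is enough to establish the two-sided $L^p$ comparisons
\[
\|S_s f\|_{L^p(\G)}\lesssim \|f\|_{L^p(\G)}+\|G_s(f)\|_{L^p(\G)}, \qquad \|G_s(f)\|_{L^p(\G)}\lesssim \|f\|_{L^p(\G)}+\|S_s f\|_{L^p(\G)},
\]
and their analogues for $S_s^{(2)}$. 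Writing $A_r f(x)=r^{-Q}\int_{|y|\le r}|f(x\cdot y)-f(x)|\,dy$, so that $S_s f(x)=\big(\int_0^\infty (r^{-s}A_r f(x))^2\,\tfrac{dr}{r}\big)^{1/2}$, the bridge between differences of $f$ and the heat flow is the identification $t\sim r^\nu$: because $\mathcal R$ is homogeneous of degree $\nu$, the semigroup $T_t$ regularizes at spatial scale $t^{1/\nu}$, and the change of variables $t=r^\nu$ matches the weights appearing in $G_s$ and $S_s$.

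\emph{Necessity.} To bound $S_s f$, I would fix $t=r^\nu$ and use the decomposition
\[
f(x\cdot y)-f(x)=\big(f-T_tf\big)(x\cdot y)-\big(f-T_tf\big)(x)+\big(T_tf(x\cdot y)-T_tf(x)\big).
\]
The terms $f-T_tf=-\int_0^t \partial_\tau T_\tau f\,d\tau$ are, after averaging over $|y|\le r$ and integrating against $r^{-2s}\,\tfrac{dr}{r}$, controlled by $G_s(f)$ together with an $L^p$-bounded maximal average (the shift by $y$ inside $(f-T_tf)(x\cdot y)$ being absorbed by the Hardy--Littlewood maximal operator, using left-invariance of Haar measure). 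The regularized difference $T_tf(x\cdot y)-T_tf(x)$ with $|y|\le r=t^{1/\nu}$ is handled by the mean value inequalities of this paper, which bound it by $|y|$ times a maximal average of first-order derivatives of $T_tf$, a quantity that again feeds into $G_s$ after the scaling $t=r^\nu$. Applying Minkowski's integral inequality in $L^2(\tfrac{dr}{r})$ and the $L^p$-boundedness of the maximal operator yields the desired estimate.

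\emph{Sufficiency and the second-order case.} For the reverse comparison I would exploit the cancellation $\int_\G \partial_t p_t(y)\,dy=0$ (differentiating $\int_\G p_t=1$), where $p_t$ denotes the convolution kernel of $T_t$, to write
\[
\partial_t T_t f(x)=\int_\G \partial_t p_t(y)\,\big(f(x\cdot y)-f(x)\big)\,dy.
\]
Splitting this integral into dyadic shells $|y|\sim 2^j t^{1/\nu}$ and using the rapid decay and homogeneity of $\partial_t p_t$, one dominates $|\partial_t T_t f(x)|$ by a rapidly convergent sum of the averages $A_{2^j t^{1/\nu}}f(x)$; inserting this into the definition of $G_s$, using $t=r^\nu$, and summing in $j$ via Minkowski's inequality in $L^2(\tfrac{dr}{r})$ gives $G_s(f)\lesssim S_s f$ pointwise, hence in $L^p$. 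The same scheme proves part~(ii) with $f(x\cdot y)-f(x)$ replaced by $\Delta^2_y f(x)$: the symmetry $p_t(y)=p_t(y^{-1})$ of the heat kernel lets one symmetrize $\partial_t p_t$ and reproduce the second difference, \[ \partial_t T_t f(x)=\tfrac12\int_\G \partial_t p_t(y)\,\Delta^2_y f(x)\,dy, \] while on the necessity side a second-order mean value inequality replaces the first-order one. The extra order of cancellation carried by $\Delta^2_y$ is precisely what enlarges the admissible range from $s\in(0,1)$ to $s\in(0,2)$.

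\emph{Main obstacle.} The crux of the argument, and the reason the mean value inequalities are indispensable, is the absence of a usable pointwise gradient structure for a general positive Rockland operator: the regularized differences $T_tf(x\cdot y)-T_tf(x)$ and the decay of $\partial_t p_t$ must be controlled purely in terms of $\mathcal R$ and the homogeneous dilations. Establishing these quantitative heat-kernel derivative bounds with the correct $t^{1/\nu}$ scaling, and verifying that the weighted dyadic sums converge exactly when $s<1$ (respectively $s<2$), is the technical heart; once this is in place, the passage to $L^p$ is routine through Minkowski's inequality and the boundedness of the maximal operator.
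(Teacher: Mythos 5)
Your proposal is correct in outline, and half of it coincides with the paper's own argument: for the bound $\|(\mathcal{R}_p)^{s/\nu}f\|_{L^p(\G)}\lesssim\|S^{(2)}_s f\|_{L^p(\G)}$ the paper (Proposition \ref{direccion_facil}) uses exactly your identity $2\,\partial_t T_t f(x)=\int_{\G}\partial_t h_t(y)\,\Delta^2_y f(x)\,dy$ (from $\int_\G h_t=1$ and $h_t(y)=h_t(y^{-1})$), the estimates of Theorem \ref{theorem-DHZ}, dyadic shells $|y|\sim 2^k t^{1/\nu}$ and Cauchy--Schwarz, to obtain the pointwise bound $\int_0^\infty t^{1-2s/\nu}|\partial_t T_t f(x)|^2\,dt\lesssim S^{(2)}_s f(x)^2$, and then concludes via Theorem \ref{G_s}; this also gives the ``if'' implication. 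Where you genuinely diverge is the converse estimate. Following \cite{CRTN}, the paper performs a \emph{discrete} Littlewood--Paley decomposition $f=\sum_m f_m$ with $f_m=-\int_{2^m}^{2^{m+1}}\partial_t T_t f\,dt$, proves localized derivative bounds $|\mathrm{X}^\beta f_m(y)|\lesssim 2^{-m|\beta|/\nu}\mathcal{M}(g_m)(x)$ via the semigroup property and a shifted-Gaussian absorption, and for each scale $2^j$ of $S^{(2)}_s$ splits the sum over $m$ into smooth pieces (handled by Theorem \ref{MeanValue_primero}) and rough pieces (handled by $|f_m|\le g_{m+1}$), finishing with two $\varepsilon$-weighted Cauchy--Schwarz summations and Fefferman--Stein; the restriction $s\in(0,2)$ enters in the geometric sums. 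You instead split \emph{continuously} at the matched scale $t=r^\nu$, writing $\Delta^2_y f=\Delta^2_y(f-T_tf)+\Delta^2_y T_t f$: the rough part is controlled by $\mathcal{M}(f-T_tf)(x)\le\int_0^t\mathcal{M}(\partial_\tau T_\tau f)(x)\,d\tau$ and Hardy's inequality (which needs only $s>0$), the smooth part by Theorem \ref{MeanValue_primero} together with a representation such as $\mathrm{X}^\beta T_tf=-\int_t^\infty \mathrm{X}^\beta\partial_\tau T_\tau f\,d\tau$, $\mathrm{X}^\beta\partial_\tau T_\tau f=\bigl(\partial_u T_u f|_{u=\tau/2}\bigr)*\mathrm{X}^\beta h_{\tau/2}$, and the dual Hardy inequality, whose convergence is exactly where $s<2$ (resp.\ $s<1$ for first differences) is used; Fefferman--Stein is still needed at the end. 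So both routes rest on the same pillars (the second-order mean value inequality, Theorem \ref{theorem-DHZ}, the vector-valued maximal inequality), but yours trades the paper's dyadic bookkeeping and scale casework for two classical Hardy inequalities, which is arguably cleaner and makes the origin of the range $(0,2)$ more transparent. Two cautions: the step you leave as an assertion --- that $\sup_{|z|\le\eta^2 r}|\mathrm{X}^\beta T_t f(x\cdot z)|$ with $|\beta|=2$ ``feeds into $G_s$'' --- is precisely the technical content the paper's decomposition is built to supply (including the shifted-Gaussian absorption of the displacement $z$ into $\mathcal{M}$ at $x$), so it must be written out for your necessity direction to be a proof rather than a plan; and on a homogeneous group the first-order mean value theorem (Theorem \ref{Teorema_MeanValue}) produces $\sum_j|y|^{\sigma_j}$ rather than $|y|$, which is harmless only because the scales $r\ge1$ are handled by the trivial estimate and for $r\le1$ one has $|y|^{\sigma_j}\le|y|$.
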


We stress that the main novelty of this result 
is the characterization by second-order differences
\eqref{scrtich_estim_2}, as similar 
results using first-order differences have previously appeared for instance in \cite{CRTN} and 
\cite{Bruno-Peloso-Vallarino} (in a slightly different context, namely that of Lie groups with polynomial growth). We also remark that an analogous result is valid for Triebel-Lizorkin spaces, which generalize Sobolev spaces. This can be deduced by following \cite{Bruno-Peloso-Vallarino} and using our second-order mean value inequalities in the setting of graded groups.

\medskip

Apart from that, it follows from \cite[Lemma 4.18]{FR-Sobolev} that $W^{\nu \ell,p}(\G)= L^{p}_{\nu \ell}(\G)$ for any $\ell \in \N$. From this fact, and an interpolation argument,
it is proved in \cite[Theorem 4.20]{FR-Sobolev} that the spaces $L^p_\s(\G)$ are in fact independent of the choice of the Rockland operator 
$\mathcal{R}$. In this article, Theorem \ref{characterization Strichartz} allows us to deduce this independence from a different approach for the range $s\in (0,2)$.

\begin{corollary}\label{indepencdencia_Rockland}
Let $\G$ be a graded group, $1<p<\infty$, and
 $\s\in (0,2)$. Then, 
the Sobolev spaces $L^p_\s(\G)$ associated to any two positive Rockland operators coincide; moreover, their norms  are
equivalent.
\end{corollary}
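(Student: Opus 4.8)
The plan is to derive Corollary \ref{indepencdencia_Rockland} directly from the Strichartz characterization in Theorem \ref{characterization Strichartz}. The crucial observation is that the functionals $S_\s f$ and $S^{(2)}_\s f$ are defined purely in terms of the homogeneous structure of $\G$ — the homogeneous norm $|\cdot|$, the group law $x\cdot y$, the homogeneous dimension $Q$, and the Haar (Lebesgue) measure $dy$ — and make no reference whatsoever to the Rockland operator $\mathcal{R}$. In particular, $S_\s f$ and $S^{(2)}_\s f$ are intrinsic to the group, independent of any choice of $\mathcal{R}$.

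First I would fix two positive Rockland operators $\mathcal{R}_1$ and $\mathcal{R}_2$ on $\G$, of homogeneous degrees $\nu_1$ and $\nu_2$ respectively, and denote by $L^{p,(1)}_\s(\G)$ and $L^{p,(2)}_\s(\G)$ the corresponding Sobolev potential spaces defined via \eqref{norm_sob_pot}. For $\s\in(0,2)$, I would apply Theorem \ref{characterization Strichartz}(ii) to each of these operators. Since the theorem holds for \emph{any} positive Rockland operator on a graded group, for $i=1,2$ we obtain that $f\in L^{p,(i)}_\s(\G)$ if and only if $f\in L^p(\G)$ and $S^{(2)}_\s f\in L^p(\G)$, together with the norm equivalence
\begin{equation*}
  \| f \|_{L^{p,(i)}_\s(\G)} \approx \| f \|_{L^p(\G)} + \| S^{(2)}_\s f \|_{L^p(\G)}.
\end{equation*}
The right-hand side does not depend on $i$, so the membership conditions for $L^{p,(1)}_\s(\G)$ and $L^{p,(2)}_\s(\G)$ coincide, giving the set-theoretic equality of the two spaces. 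Chaining the two equivalences through the common quantity $\| f \|_{L^p(\G)} + \| S^{(2)}_\s f \|_{L^p(\G)}$ yields $\| f \|_{L^{p,(1)}_\s(\G)} \approx \| f \|_{L^{p,(2)}_\s(\G)}$, which is the asserted norm equivalence.

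I do not anticipate a genuine obstacle here, since all the analytic work is already absorbed into Theorem \ref{characterization Strichartz}; the corollary is essentially a bookkeeping consequence of having an intrinsic characterization. The only point requiring a small amount of care is ensuring that the implicit constants in the norm equivalence are uniform: in Theorem \ref{characterization Strichartz} these constants are allowed to depend on $\G$, $p$, $\s$, and the chosen Rockland operator, so when transferring between $\mathcal{R}_1$ and $\mathcal{R}_2$ the resulting equivalence constant depends on both operators, which is perfectly acceptable for the statement of the corollary. One could also observe that part (i) of Theorem \ref{characterization Strichartz} gives the same conclusion on the narrower range $\s\in(0,1)$ via $S_\s$, providing a consistency check; but part (ii) alone already covers the full range $\s\in(0,2)$ claimed in the corollary.
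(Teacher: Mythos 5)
Your proposal is correct and is essentially the paper's own argument: the paper deduces the corollary directly from Theorem \ref{characterization Strichartz}, observing that the Strichartz functionals are intrinsic to the group and make no reference to the Rockland operator, so the characterization and norm equivalence \eqref{scrtich_estim_2} hold simultaneously for both operators and can be chained. Your added remark that the resulting equivalence constants depend on both operators is a fair point of care but does not change the argument.
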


\medskip

As a byproduct of our investigations, we also give an explicit formula for the fractional powers of a Rockland operator $\mathcal{R}$ (Theorem \ref{Teo_R_puntual} and Corollary \ref{Coro_rock_characterization}), which generalizes a known formula for the fractional Laplacian in Euclidean spaces (see \cite[Lemma 3.2]{DPV}). We also study their 
mapping properties in the spaces $\mathbb{W}_{p;\nu}(\G)$ (Proposition \ref{Prop_cont_rockland}).

\medskip

As said before, a fundamental role in our characterization theorems is played by mean value inequalities. Indeed, a first order mean value inequality for $L^p(\G)$ (Theorem \ref{Lp_mean_value}) allows us to prove Theorem \ref{characterization W1,p}; and also we use it for proving Theorem \ref{g_equiv_norm} for $p\not= 2$. Additionally, with respect to Theorem \ref{characterization Strichartz}, 
a key difference with the computations in \cite{CRTN} is that we use a second-order mean value inequality (Theorem \ref{MeanValue_primero}), which allows for a wider range for the parameter $s$. In turn, this result depends on the second-order Taylor expansion which is available due to the special structure of the homogeneous Lie groups (and which is not available in the setting of Lie groups of polynomial growth considered in \cite{CRTN}). For this reason, we think that this is the right setting for our results. 

\medskip

{For the study of other related functional spaces, such as Besov spaces on Lie groups we refer the reader for example to  \cite{Saka} (on nilpotent Lie groups), and also to \cite{furioli2006littlewood,Sire} (in the setting of Lie groups with polynomial volume growth quipped with a second-order sub-Laplacian.)}. We remark however that, in general, our setting is different, since the Rockland operator $\mathcal{R}$ can be of higher order (see Section \ref{sec_frac_power_rock} for details). 

\medskip

This article is organized as follows. In Section \ref{section-pelim} we review some basic definitions and results. Section \ref{sec_meanvalue} 
is devoted to the study of mean value inequalities on homogeneous Lie groups. It
starts by recalling a classical mean value inequality for homogeneous groups. Then, in Subsection \ref{Lp_version}, we present a first order mean value inequality for $L^p(\G)$ (Theorem \ref{Lp_mean_value}).  This allows us to prove Theorem \ref{characterization W1,p} in Subsection \ref{charcterization_derivatives}, which is a characterization of $\mathbb{W}_{p;1}(\G)$ by using first order differences.
Finally, in Subsection \ref{subsec_2nd_mv} we develop second order mean value inequalities: Theorems \ref{MeanValue_primero} and \ref{teo_second order_for_Sobolev}. In the following three sections, we will assume $\G$ as a graded group. Section \ref{sec_frac_power_rock} deals with the fractional powers of a Rockland operator. 
In Section \ref{sec_g-function} we prove Theorems \ref{g_equiv_norm} and \ref{G_s}; and  in Section \ref{sec_strichatz} we prove Theorem \ref{characterization Strichartz}.
The proof of Theorem \ref{g_equiv_norm} holds from spectral theory if $p=2$; and for the case $p \not = 2$, the function $\phi_\alpha$ (given by  \eqref{phi_alpha}) plays a crucial role.  We include an Appendix with the aim of giving an explicit expression for this function 
in the Euclidean space.

\section{Basic definitions: Homogeneous and graded Lie groups}\label{section-pelim}

For the convenience of the reader we repeat the relevant material from  \cite{ FR-Sobolev} and \cite{BLU} 
without proofs, thus making our exposition self-contained and also for fixing notation.

Let $V$ be a real vector space. A family $\{D_\lambda\}_{\lambda>0}$ of linear operators on $V$ is
called a set of \textit{dilations} on $V$ if there are real numbers $\sigma_j>0$ and subspaces $W_{\sigma_j}$
of $V$ such that $V$ is the direct sum of the $W_{\sigma_j}$
and
$$(D_\lambda)_{|_{W_{\sigma_j}}}
= \lambda^{\sigma_j}\mathrm{Id}.$$

\begin{definition}\label{def_homog}
	A connected  simply connected Lie group $(\G,\cdot)$ is said \textit{homogeneous} if it is equipped with a family of automorphisms such that their differentials at the identity point give rice to a family of dilations on its Lie algebra $\mathfrak{g}$.
\end{definition}

It follows that every homogeneous Lie group is nilpotent. Then the exponential map $\exp_\G$ is a surjection and by abuse of notation, we will use the same notation  $\{D_\lambda\}_{\lambda>0}$ for  the group automorphisms in Definition \ref{def_homog} and for their  differentials, or simply $D_\lambda(x)=\lambda x$ for the group automorphisms. 
Having both the group and dilation structures, homogeneous Lie groups are a natural setting to generalize many questions of Euclidean harmonic analysis. 

From now on, let  $(\G,\cdot)$ be an $n$-dimensional homogeneous Lie group. Then,   there exists a basis $\{X_1, . . . , X_n\}$ of its Lie algebra $\mathfrak{g}$ and positive numbers $\sigma_1,\dots, \sigma_n$ such that 
\begin{equation}\label{basis_dil}
	D_\lambda(X_j) = \lambda^{\sigma_j}X_j \qquad \text{for all } \lambda > 0, \, j= 1,\dots, n.
\end{equation}
We called $\sigma_1,\dots,\sigma_n$ the \textit{dilations' weights}. Without loss of generality, we can assume that $1\leq\sigma_1\leq \ldots \leq \sigma_n$. The homogeneous dimension of $\G$ is defined by
\begin{equation*}
	Q=\sum_{j=1}^n \sigma_j.
\end{equation*} 
We will say that a basis $\{X_1,...,X_n\}$ of $\mathfrak{g}$ satisfying \eqref{basis_dil} is a \textit{Jacobian basis}. By fixing such a basis, the group $\G$ may be identified with $\R^n$ equipped with a polynomial law. With this identification
the unit element of $\G$ may simply be denoted $0$.  
Moreover, this yields a Lebesgue measure on $\mathfrak{g}$ and a Haar measure on $\G$ for which holds $$\int_{\G}f(\lambda x) \, dx = \lambda^{-Q}\int_{\G} f(x) \, dx \qquad \forall f\in L^1(\G).$$
Besides, any homogeneous group admits a \textit{homogeneous quasi-norm} relative to the given dilations $\{D_\lambda\}_{\lambda>0}$, which is a continuous function $|\cdot | : \G \to [0, +\infty)$
	satisfying 
	\begin{enumerate}
		\item[i)] $|x| = 0$ if and only if $x=0$; 
		\item[ii)]$|x^{-1}| = |x|$ for every $x\in\G$; \item[iii)] $|D_\lambda(x)| = \lambda|x|$ for every $x \in\G$ and $\lambda >0$.
    \end{enumerate}
The existence can be shown by defining
\begin{equation}\label{equasi_norm}
    |x|=\sum_{j=1}^n |c_j|^{1/\sigma_j} \qquad \text{ or equivalently } \qquad  |x|=\max_{1\leq j\leq n} |c_j|^{1/\sigma_j}
\end{equation}
where $x=\exp_\G(\sum_{j=1}^n c_jX_j)\in\G$.
In general, if $|\cdot|$ is a homogeneous quasi-norm in a homogeneous group $\G$, there is a constant $\rho>0$ such that 
\begin{equation}\label{triang}
 |x\cdot y|\leq \rho (|x|+|y|) \qquad \forall x\in \G.  
\end{equation}
Moreover, the topology
induced by the $|\cdot|$-balls
$$B(x,r) = \{y \in \G : \, |x^{-1}\cdot y| < r\} \qquad (x\in\G, r>0)$$
coincides with the Euclidean topology of $\R^n$. As a comment, notice that the volume  of $B(x,r)$ is $r^Q$, for every $x\in \G$. Furthermore, for our proofs, we will use polar coordinates for homogeneous groups, see \cite[Proposition 3.1.42]{FR-book}.

We define the \textit{$\G$-length} of a multi-index $\beta \in \N_0^n$ (where $\N_0=\N \cup \{0\}$) as
$$ |\beta|_\G = \langle \beta, \sigma \rangle = \sum_{i=1}^n \beta_i \sigma_i .$$
We say that $p:\G\to \R$ is a polynomial function is $f\circ\exp_G$ is a polynomial on the vector space $\mathfrak{g}$. Moreover, by abuse of notation,  can set  
$$ p(x) = \sum_{\beta\in F  } c_\beta x^\beta= \sum_{\beta\in F  } c_\beta x_1^{\beta_1}\dots x_n^{\beta_n} \qquad (F \; \hbox{finite}, \, c_\beta \in \R),$$
and its \textit{homogeneous degree} is defined as
$$ \hbox{deg}_\G(p) = \max \left\{|\beta|_{\G} : \, c_\beta  \neq 0 \right\}. $$ 
In general, we said that a real valued function $f$ (or distribution) on $\G\setminus\{0\}$  is \textit{homogeneous} of homogeneous degree
$\nu$ if
$f \circ D_\lambda = \lambda^\nu f$ for any $\lambda >0$. In particular, through the identifications stated above, the coordinate function 
$$\G \ni x=(x_1,\dots,x_n)\mapsto x_j$$
is homogeneous of homogeneous degree $\sigma_j$.

\begin{proposition}\label{composition_inv_law} \cite[Theorem 1.3.16 and Corollary 1.3.16]{BLU}
	Let $\G$ be a homogeneous Lie group. 
	Then the group multiplication and inversion  have polynomial component
	functions. Furthermore, we have
	$$(x\cdot y)_1=x_1+y_1 \qquad (x\cdot y)_k= x_k+y_k+Q_k(x,y)$$
	$$(x^{-1})_1=-x_1 \qquad  (x^{-1})_k = 
	-x_k + q_k(x) $$  
	where $Q_k$ is a sum of mixed monomials in $x$, $y$, it only 
	depends on $x_l$'s and $y_l$'s
	with $\sigma_l < \sigma_k $ and satisfies $Q_k(D_\lambda(x),D_\lambda(y))=\lambda^{\sigma_k}Q_k(x,y)$; and $q_k(x)$ is a polynomial
	function in $x$, homogeneous of homogeneous degree $\sigma_k$
	depending only on $x_l$'s
	with $\sigma_l < \sigma_k $.
\end{proposition}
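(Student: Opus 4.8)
The plan is to derive everything from the Baker--Campbell--Hausdorff (BCH) formula together with the grading induced by the dilations. Since a homogeneous Lie group is nilpotent and simply connected, the exponential map $\exp_\G : \mathfrak{g} \to \G$ is a global diffeomorphism, and under the identification $\G \cong \R^n$ coming from the Jacobian basis $\{X_1,\dots,X_n\}$ one has $x = \exp_\G(\sum_j x_j X_j)$. Nilpotency guarantees that the BCH series
$$ H(X,Y) = X + Y + \tfrac12 [X,Y] + \tfrac1{12}\big([X,[X,Y]] - [Y,[X,Y]]\big) + \cdots $$
terminates after finitely many iterated brackets, so that $\exp_\G(X)\cdot \exp_\G(Y) = \exp_\G(H(X,Y))$ and the $k$-th coordinate $(x\cdot y)_k$ is exactly the coefficient of $X_k$ in $H(X,Y)$ when $X=\sum_i x_i X_i$ and $Y=\sum_j y_j X_j$.

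The key structural input is that each $D_\lambda$ is a Lie algebra automorphism, so that $D_\lambda[X_i,X_j] = [D_\lambda X_i, D_\lambda X_j] = \lambda^{\sigma_i+\sigma_j}[X_i,X_j]$; hence $[X_i,X_j]$ lies in the span of those $X_k$ with $\sigma_k = \sigma_i+\sigma_j$, and more generally any iterated bracket of length $m\ge 2$ of the basis fields is homogeneous of weight equal to the sum of the weights of its entries. I would then expand $H(X,Y)$ and read off the coefficient of $X_k$: the linear term $X+Y$ contributes $x_k+y_k$, and I set $Q_k(x,y)$ to be the contribution of the higher bracket terms. Three properties of $Q_k$ follow from the bookkeeping: (a) polynomiality and homogeneity, since each surviving bracket term is multilinear in the coordinates and, to contribute to the $X_k$-component, must be homogeneous of total weight $\sigma_k$, giving $Q_k(D_\lambda x, D_\lambda y)=\lambda^{\sigma_k}Q_k(x,y)$; (b) dependence only on $x_l,y_l$ with $\sigma_l<\sigma_k$, because each bracket has length $\ge 2$ and its entries carry positive weights summing to $\sigma_k$, so each single entry has weight strictly below $\sigma_k$; (c) each monomial is \emph{mixed}, i.e.\ involves at least one $x$ and one $y$ factor, because an iterated bracket built purely from the $X$-entries (or purely from the $Y$-entries) vanishes while the purely linear contributions are already contained in $X+Y$. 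In particular, for the minimal weight there is no index $l$ with $\sigma_l<\sigma_1$, whence $Q_1\equiv 0$ and $(x\cdot y)_1=x_1+y_1$.

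For the inversion I would argue recursively from the group law just obtained. Writing $z=x^{-1}$, the identity $x\cdot z = 0$ reads coordinatewise as $x_k + z_k + Q_k(x,z)=0$, that is $z_k = -x_k - Q_k(x,z)$. Since $Q_k$ depends only on the coordinates of weight strictly less than $\sigma_k$, ordering the coordinates by increasing weight lets me solve for $z_k$ by induction: $z_1=-x_1$ (as $Q_1\equiv0$), and at step $k$ the right-hand side involves only $x$ and the already-determined $z_l$ with $\sigma_l<\sigma_k$, each a polynomial in $x$. This exhibits $(x^{-1})_k = -x_k + q_k(x)$ with $q_k := -Q_k(x,z)$ a polynomial depending only on the $x_l$ with $\sigma_l<\sigma_k$. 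Homogeneity of $q_k$ is then automatic: because $D_\lambda$ is a group automorphism, $(D_\lambda x)^{-1}=D_\lambda(x^{-1})$, so $z_k(D_\lambda x)=\lambda^{\sigma_k}z_k(x)$ and thus $q_k$ is homogeneous of homogeneous degree $\sigma_k$.

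I expect the main obstacle to be the combinatorial weight-bookkeeping in steps (b)--(c): one must verify carefully that the homogeneity of iterated brackets under $D_\lambda$ forces every monomial of $Q_k$ to sit strictly below weight $\sigma_k$ in each variable and to be genuinely mixed, rather than merely of total weight $\sigma_k$. Once the grading $[W_{\sigma_i},W_{\sigma_j}]\subseteq W_{\sigma_i+\sigma_j}$ is in hand and the vanishing of single-letter iterated brackets is invoked, the remaining assertions reduce to a routine termwise inspection of the (finite) BCH expansion together with the inductive solution of $x\cdot x^{-1}=0$.
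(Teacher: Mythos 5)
Your proof is correct, but it takes a genuinely different route from the source: the paper offers no proof of this proposition, quoting it from \cite{BLU}, and the argument there is not based on the Baker--Campbell--Hausdorff formula. The cited proof works directly on $\R^n$ from two elementary facts: first, a smooth function that is homogeneous of degree $\sigma_k$ with respect to dilations with positive weights is automatically a polynomial all of whose monomials have $\G$-degree $\sigma_k$ --- applied to $(x\cdot y)_k$, which is jointly homogeneous because the $D_\lambda$ are group automorphisms; second, the neutral-element identities $x\cdot 0=x$ and $0\cdot y=y$ force the pure-$x$ part of $(x\cdot y)_k$ to be exactly $x_k$ and the pure-$y$ part to be $y_k$, so the remainder $Q_k$ consists of mixed monomials, and mixedness plus total weight $\sigma_k$ forces every factor to have weight strictly below $\sigma_k$; homogeneity of $Q_k$ and $q_k$ then follows directly from $(D_\lambda x)\cdot(D_\lambda y)=D_\lambda(x\cdot y)$ and $(D_\lambda x)^{-1}=D_\lambda(x^{-1})$, and the inverse is obtained by the same coordinatewise recursion you use. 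Your BCH bookkeeping does check out: the eigenvalue computation $D_\lambda[X_i,X_j]=\lambda^{\sigma_i+\sigma_j}[X_i,X_j]$ correctly places iterated brackets in the eigenspace of the summed weight, single-letter Lie monomials of degree at least $2$ vanish (so mixedness holds term by term, with no cancellation issue), and the induction solving $x\cdot z=0$ in increasing weight order is sound. What your approach buys is explicitness --- $Q_k$ is exhibited through the structure constants --- at the cost of a tacit hypothesis: that the identification $\G\cong\R^n$ uses exponential coordinates of the first kind, $x=\exp_\G\bigl(\sum_j x_jX_j\bigr)$, which is indeed the paper's convention (see \eqref{equasi_norm}), whereas the cited argument needs only that the dilations act diagonally in the chosen coordinates.
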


By \cite[Proposition 3.1.28]{FR-book}, each vector field $X_j$ of a Jacobian basis of $\mathfrak{g}$ is homogeneous of homogeneous degree $\sigma_j$ and takes the form
\begin{equation}\label{jacobian_basis_carnot}
	X_j = \sum_{k=j}^n a_k^{(j)}(x_1,\dots,x_n) \partial_{x_k} ,
\end{equation}
where $a_k^{(j)}$ is a polynomial function of homogeneous degree $\sigma_k-\sigma_j$ and
 $a_j^{(j)} =1$. 
For any $\alpha\in\N_0^n$, we denote 
\begin{equation}\label{X}
	\mathrm{X}^\alpha:=X_1^{\alpha_1}\dots X_n^{\alpha_n}.
\end{equation}

\begin{definition}
	A connected simply connected Lie group $(\G,\cdot)$ is said \textit{graded} if its Lie algebra is endowed with a vector space decomposition as a direct sum 
	\begin{equation}\label{grad}
		\mathfrak{g}=W_1\oplus W_2 \oplus \dots \oplus W_r   \qquad \text{such that } \, 
		[W_i,W_j]\subseteq W_{i+j}.
	\end{equation}
\end{definition}

A gradation over a Lie algebra is not unique. 
Graded Lie groups are homogeneous and the \textit{natural} homogeneous
structure for the graded Lie algebra
is the same for any of the two gradations. Indeed, 
if $\G$ is a graded Lie group with a gradation $\{W_j\}$  of $\mathfrak{g}$ as in \eqref{grad}, the dilations
$$D_\lambda(X) = \lambda^{j}X \qquad \text{if }
X \in W_j$$
are automorphisms. This shows that $\G$ canonically inherits a homogeneous structure. Conversely, if the dilations $\{D_\lambda\}_{\lambda>0}$ on a homogeneous Lie algebra $\mathfrak{g}$ have eigenvalues $\lambda^{j}$
(i.e. with integer exponents), the eigenspaces $W_j$ relative to the eigenvalues $\lambda^j$ form
a gradation of $\mathfrak{g}$. In fact, 
the relevant structure for the analysis of graded Lie groups
is their natural homogeneous structure.

A graded Lie group  $\G$ such that its Lie algebra $\mathfrak{g}$ admits a  direct sum decomposition as in \eqref{grad} satisfying
$[W_1,W_{j-1}]=W_j$ if $2 \leq j \leq r$ and $[W_1,W_r]=\{0\}$ is called \textit{stratified} (or \textit{Carnot}). In this case, the first stratum $W_1$ generates $\mathfrak{g}$ as an algebra. We will write $n_1$ for the dimension of $W_1$. 

\section{Mean Value Inequalities on graded Lie groups}\label{sec_meanvalue}

Throughout this section, unless otherwise specified, we consider $\G$ a homogeneous Lie group where we fix dilations' weights $1\leq\sigma_1\leq \ldots \leq \sigma_n$, and  a Jacobian basis $\{X_1,X_2, \ldots, X_n \}$ of $\mathfrak{g}$. 

\subsection{Previous results}
In this subsection, we collect the classical mean value theorem on homogeneous Lie groups and a well-known key lemma (see Lemma \ref{lemma3.1.47}). 
For the particular case of Carnot groups see \cite[Chapter 1]{Folland-Stein}, or the book \cite{BLU}.

\begin{theorem} (Mean Value Theorem) \cite[Proposition 3.1.46]{FR-Sobolev}
\label{Teorema_MeanValue}	Let $\G$ be a homogeneous Lie group and fix a homogeneous quasi-norm $|\cdot|$ on $\G$. There exist group
	constants $C_0 > 0$ and $\eta > 1$ such that for all $f \in C^1(\G)$ and all $x, y \in\G$, we have
	\begin{equation*}
		|f(x\cdot y)-f(x)|\leq C_0\sum_{j=1}^n |y|^{\sigma_j}\sup_{|z|\leq \eta|y|}|X_jf(x\cdot z)|.
	\end{equation*}
\end{theorem}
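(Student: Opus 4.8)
The plan is to reduce the statement to the fundamental theorem of calculus along a one-parameter path joining $x$ to $x\cdot y$, and then to control both the coefficients that appear and the location of the path in terms of the quasi-norm $|y|$. Since any two homogeneous quasi-norms on $\G$ are equivalent, I would first work with the explicit quasi-norm $|x|_\ast=\sum_{j=1}^n|c_j|^{1/\sigma_j}$ from \eqref{equasi_norm}, where $x=\exp_\G(\sum_j c_j X_j)$, and transfer the final inequality to an arbitrary $|\cdot|$ at the cost of absorbing the equivalence constants into $C_0$ and $\eta$.

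Write $y=\exp_\G(Y_0)$ with $Y_0=\sum_{j=1}^n c_j X_j\in\mathfrak{g}$, and consider the curve $\gamma(s)=x\cdot\exp_\G(sY_0)$ for $s\in[0,1]$, which satisfies $\gamma(0)=x$ and $\gamma(1)=x\cdot y$. Because $\{\exp_\G(sY_0)\}_{s}$ is the one-parameter subgroup generated by $Y_0$, the group law gives $\gamma(s+h)=\gamma(s)\cdot\exp_\G(hY_0)$; differentiating at $h=0$ and using that $X_1,\dots,X_n$ are the left-invariant vector fields of the Jacobian basis, I obtain
$$\frac{d}{ds}f(\gamma(s))=\sum_{j=1}^n c_j\,(X_jf)(\gamma(s)).$$
Since $f\in C^1(\G)$ the integrand is continuous, so the fundamental theorem of calculus yields
$$f(x\cdot y)-f(x)=\int_0^1\sum_{j=1}^n c_j\,(X_jf)\big(x\cdot\exp_\G(sY_0)\big)\,ds.$$

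Finally I estimate. Taking absolute values and bounding each integral by the supremum along the path gives
$$|f(x\cdot y)-f(x)|\leq\sum_{j=1}^n |c_j|\,\sup_{0\le s\le 1}\big|(X_jf)(x\cdot\exp_\G(sY_0))\big|.$$
From $|y|_\ast\ge|c_j|^{1/\sigma_j}$ I get $|c_j|\le|y|_\ast^{\,\sigma_j}$, which furnishes the factor $|y|^{\sigma_j}$. It remains to confine the path to a fixed ball: the point $\exp_\G(sY_0)$ has exponential coordinates $(sc_1,\dots,sc_n)$, so for $s\in[0,1]$, using $\sigma_j\ge1$ and hence $s^{1/\sigma_j}\le1$,
$$|\exp_\G(sY_0)|_\ast=\sum_{j=1}^n s^{1/\sigma_j}|c_j|^{1/\sigma_j}\le\sum_{j=1}^n|c_j|^{1/\sigma_j}=|y|_\ast.$$
Thus every $z=\exp_\G(sY_0)$ along the path satisfies $|z|_\ast\le|y|_\ast$, and translating back to the given quasi-norm produces $|z|\le\eta|y|$ for a constant $\eta>1$ depending only on the equivalence constants; combining these bounds gives the asserted inequality. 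I expect the only genuinely delicate point to be the bookkeeping of the quasi-norm equivalence constants --- ensuring that the passage from $|\cdot|_\ast$ to a general $|\cdot|$ yields uniform $C_0$ and $\eta$ independent of $f$, $x$, $y$ --- while the differentiation and the application of the fundamental theorem of calculus are routine.
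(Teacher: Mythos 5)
Your proof is correct, but it takes a genuinely different route from the one the paper relies on. The paper quotes this theorem from Fischer--Ruzhansky without proof, and the method it reproduces (in the proof of Theorem \ref{Lp_mean_value}, which explicitly follows the proof of the cited Proposition 3.1.46) uses coordinates of the \emph{second} kind: by Lemma \ref{lemma3.1.47} one writes $y=\exp_\G(t_1X_1)\cdot\ldots\cdot\exp_\G(t_nX_n)$, telescopes $f(x\cdot y)-f(x)$ over the intermediate points $x\cdot y_1\cdots y_k$, applies the fundamental theorem of calculus along each one-parameter subgroup $\exp_\G(sX_k)$ separately, and then locates all intermediate points in a ball of radius $\eta|y|$ by repeated use of the quasi-triangle inequality \eqref{triang} together with the bound $|t_j|^{1/\sigma_j}\leq C_1|y|$; that is where $\eta$ comes from. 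You instead use coordinates of the \emph{first} kind: a single exponential $y=\exp_\G(Y_0)$, one application of the fundamental theorem of calculus along $s\mapsto x\cdot\exp_\G(sY_0)$, and the linearity of the correspondence between $\mathfrak{g}$ and left-invariant vector fields to write the derivative as $\sum_j c_j X_jf$. This is shorter and makes the localization of the path immediate: in the explicit quasi-norm \eqref{equasi_norm} the path never leaves the ball of radius $|y|_{\ast}$, so no triangle inequality is needed, and $C_0,\eta$ arise only from the equivalence of homogeneous quasi-norms --- a standard fact (homogeneity plus compactness of the unit sphere), but one the paper never states, so you should cite it (e.g.\ \cite[Proposition 3.1.35]{FR-book}) or prove it. What the paper's product decomposition buys in exchange is that each increment moves along a single coordinate direction $X_j$, which is precisely the form recycled for the $L^p$ version in Theorem \ref{Lp_mean_value}, where each factor is handled by translation invariance of the Haar measure and contributes only $\|X_jf\|_{L^p(\G)}$. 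Incidentally, your single-exponential device is exactly the one the paper itself employs later for the second-order $L^p$ inequality (Theorem \ref{teo_second order_for_Sobolev}), so both techniques are native to this setting.
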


The following lemma  is useful in the proof of  Theorem \ref{Teorema_MeanValue} and we will use it to generalize the mean value inequality on $L^p(\G)$ (see Theorem \ref{Lp_mean_value}).

\begin{lemma}\cite[Lemma 3.1.47]{FR-Sobolev}\label{lemma3.1.47} Let $\G$ be a homogeneous Lie group.
	The map $\Phi:\R^n\to \G$
	\begin{equation*}
		\Phi(t_1,\dots,t_n):=\mathrm{exp}_\G(t_1X_1)\cdot\ldots\cdot \mathrm{exp}_\G(t_nX_n)
	\end{equation*}
	is a global diffeomorphism. Moreover, fixing a homogeneous quasi-norm $|\cdot|$ on $\G$, there is a constant $C_{1}>0$ such that 
$$
\left|t_{j}\right|^{\frac{1}{\sigma_{j}}} \leq C_{1}\left|\Phi\left(t_{1}, \ldots, t_{n}\right)\right| \qquad \forall\left(t_{1}, \ldots, t_{n}\right) \in \mathbb{R}^{n}, \, j=1, \ldots, n.
$$
\end{lemma}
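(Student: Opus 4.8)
The plan is to treat the two assertions separately: first I would establish that $\Phi$ has a lower-triangular polynomial form in exponential coordinates and deduce from this that it is a global diffeomorphism, and then obtain the quasi-norm estimate by a soft homogeneity-and-compactness argument. Identifying $\G$ with $\R^n$ through $\exp_\G$ and the Jacobian basis, $\exp_\G(t_jX_j)$ becomes the point $t_je_j$, so $\Phi$ is the composition of the polynomial group law (Proposition \ref{composition_inv_law}) with these linear maps and hence each component $(\Phi(t))_k$ is a polynomial in $t$. The key structural input is the dilation-equivariance $\Phi(\delta_\lambda t)=D_\lambda\Phi(t)$, where $\delta_\lambda t=(\lambda^{\sigma_1}t_1,\dots,\lambda^{\sigma_n}t_n)$; this holds because $D_\lambda$ is a group automorphism and $D_\lambda\exp_\G(t_jX_j)=\exp_\G(\lambda^{\sigma_j}t_jX_j)$. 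Since the coordinate function $x_k$ is homogeneous of degree $\sigma_k$, equivariance forces $(\Phi(t))_k$ to be $\delta$-homogeneous of degree $\sigma_k$, so every monomial $t^a$ occurring in it satisfies $\sum_l a_l\sigma_l=\sigma_k$. Using $\Phi(0)=0$ and $d\Phi_0=\mathrm{Id}$ (the $j$-th partial at the origin is $X_j$), I would read off $(\Phi(t))_k=t_k+R_k$ with $R_k$ a polynomial of degree $\geq 2$; a short weight count shows any such monomial can involve only the variables $t_l$ with $\sigma_l<\sigma_k$.

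With this triangular form the diffeomorphism property is routine. Ordering the coordinates by increasing weight, the Jacobian matrix $\partial(\Phi)_k/\partial t_m$ has unit diagonal and vanishes whenever $\sigma_m\geq\sigma_k$ with $m\neq k$, so $\det d\Phi\equiv 1$ and $\Phi$ is everywhere a local diffeomorphism. Bijectivity follows by solving $(\Phi(t))_k=x_k$ successively in order of increasing weight: at minimal weight $R_k=0$ gives $t_k=x_k$, and inductively $t_k=x_k-R_k(\{t_l:\sigma_l<\sigma_k\})$ with the right-hand side already determined, producing a polynomial inverse. Hence $\Phi$ is a global diffeomorphism.

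For the estimate I would set $N(t)=\max_{1\leq j\leq n}|t_j|^{1/\sigma_j}$ and note that the required inequalities for all $j$ are equivalent to $N(t)\leq C_1|\Phi(t)|$. Both $N$ and $t\mapsto|\Phi(t)|$ are continuous, vanish only at the origin (for the latter because $\Phi$ is injective with $\Phi(0)=0$), and are $\delta$-homogeneous of degree one: $N(\delta_\lambda t)=\lambda N(t)$ trivially, and $|\Phi(\delta_\lambda t)|=|D_\lambda\Phi(t)|=\lambda|\Phi(t)|$ by equivariance and property (iii) of the quasi-norm. Restricting to the compact set $\Sigma=\{N=1\}$, continuity and positivity give $c:=\min_{\Sigma}|\Phi|>0$, and rescaling an arbitrary $t\neq 0$ to $\Sigma$ by $\delta_{1/N(t)}$ upgrades this to $|\Phi(t)|\geq c\,N(t)$ for all $t$, so that $C_1=1/c$ works.

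The main obstacle is the first step, namely carefully justifying the dilation-equivariance together with the weight bookkeeping that yields $(\Phi(t))_k=t_k+R_k(\{t_l:\sigma_l<\sigma_k\})$; the delicate point is handling the case of repeated dilation weights, where one must use that $R_k$ excludes variables of weight \emph{equal} to $\sigma_k$ so that the weight-ordered Jacobian is genuinely unitriangular. Everything else—the Jacobian computation, the successive inversion, and the compactness argument for the estimate—is then essentially formal.
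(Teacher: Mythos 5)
Your proposal is correct and complete. Note that the paper itself does not prove this statement --- it is quoted verbatim from Fischer--Ruzhansky (Lemma 3.1.47 of the cited reference), so there is no in-paper proof to compare against; your argument is essentially the standard one from that source: dilation-equivariance $\Phi\circ\delta_\lambda = D_\lambda\circ\Phi$ plus $d\Phi_0=\mathrm{Id}$ forces the unitriangular form $(\Phi(t))_k=t_k+R_k(\{t_l:\sigma_l<\sigma_k\})$, giving a polynomial inverse by back-substitution, and the quasi-norm bound follows from degree-one homogeneity of both $N(t)=\max_j|t_j|^{1/\sigma_j}$ and $|\Phi(\cdot)|$ together with compactness of $\{N=1\}$. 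You also correctly isolate and resolve the one delicate point (repeated weights), observing that positivity of the weights excludes from $R_k$ any monomial of ordinary degree $\geq 2$ involving a variable of weight equal to $\sigma_k$, while $d\Phi_0=\mathrm{Id}$ kills the off-diagonal linear terms of equal weight.
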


\begin{remark}
If $\G$ is a Carnot group, there exists $C>0$ and $m\in\N$ such that any $x\in\G$ can be express as $x=x_1\cdot x_2 \cdot\ldots\cdot x_m$ where $x_j=\exp(Z_j)$ with $Z_j$ in the first stratum $W_1$, and $|x_j|\leq C|x|$ for all $j$. See \cite[Lemma (5.1)]{Folland} or \cite[Lemma (1.40)]{Folland-Stein}.
\end{remark}

\begin{definition}\label{def_Taylor_homog} (see \cite[Definition 3.1.49]{FR-Sobolev})
	Let $(\G,\cdot)$ be a homogeneous Lie group. 
	The (left) Taylor polynomial of a suitable function $f$ centered at $x_0$ and
	of $\G$-order $k \in \N_0$ is the unique  polynomial $p$ with $\textrm{deg}_\G(p)\leq k$ such that
	for every multi-index $\alpha\in\N^{n}_0$ with $|\alpha|_\G\leq k$ satisfies
	\begin{equation}\label{Taylor_pol}
		\mathrm{X}^\alpha p(0) = \mathrm{X}^\alpha f(x_0) 
	\end{equation}
	for $\mathrm{X}^\alpha$  defined as in \eqref{X}.
	We denote $p$ as $P_{x_0,k}^f$. Besides, the remainder of order $k$ is defined as
	$$R_{x_0,k}^f(y)=f(x_0\cdot y)-P_{x_0,k}^f(y)$$
\end{definition}
In the definition, the suitable functions $f:\G\to\R$ are such that their derivatives $\mathrm{X}^\alpha f$ are continuous in a
neighbourhood of $x_0$ for $|\alpha|_\G \leq k$


\begin{theorem}\label{Th_Taylor_homog} \cite[Theorem 3.1.51]{FR-Sobolev}
	Let $\G$ be a homogeneous Lie group and  fix a homogeneous quasi-norm $|\cdot|$ on $\G$. 
	For any
	$k \in\N_0$, there exist constants $\eta, c_k > 0$ such that for all functions $f \in C^{[k]+1}(\G)$,
	where $$[k]:=\max\{|\alpha| :\  \alpha \in\N^n_0
	\text{ with } |\alpha|_\G \leq k\},$$
	and for all $x, h \in G$, we have
	\begin{equation*}
		|R^f_{x,k}(h)| \leq c_k  \sum_{|\alpha|\leq [k]+1 , \, |\alpha|_\G>k}|h|^{|\alpha|_\G} \sup_{|z|\leq\eta^{ [k]+1} |h|} |\mathrm{X}^\alpha f(x\cdot z)|
	\end{equation*}
	where $\mathrm{X}^\alpha$ is as in \eqref{X}. 
The 
constant $\eta$ comes from the Mean Value Theorem \ref{Teorema_MeanValue}. 
\end{theorem}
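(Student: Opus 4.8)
The plan is to prove the estimate by induction on the Euclidean order $[k]$, after reducing to the case $x=0$ and establishing a commutation rule between the vector fields and the Taylor polynomial operator. First I would use left-invariance of the Jacobian basis to reduce to $x=0$: since $\mathrm{X}^\alpha(f\circ\tau_x)=(\mathrm{X}^\alpha f)\circ\tau_x$ and, by definition, $P_k(f,x)(x\cdot h)=P_k(f\circ\tau_x,0)(h)$, replacing $f$ by $f\circ\tau_x$ turns the claim into the statement that the remainder $R_k(f)(h):=f(h)-P_k(f,0)(h)$ obeys the asserted bound with all suprema taken over $\{|z|\le\eta^{[k]+1}|h|\}$. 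It is convenient to extend Definition \ref{def_Taylor_homog} to arbitrary real orders $s\ge 0$ (the polynomial of $\G$-degree $\le s$ matching $\mathrm{X}^\alpha f(0)$ for $|\alpha|_\G\le s$), since the induction produces non-integer intermediate orders.

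The key algebraic step is the commutation identity
$$ X_j P_k(f,0)=P_{k-\sigma_j}(X_j f,0). $$
To prove it, note $\mathrm{deg}_\G(X_j P_k(f,0))\le k-\sigma_j$, because $X_j$ is homogeneous of $\G$-degree $\sigma_j$. For any $\beta$ with $|\beta|_\G\le k-\sigma_j$, the left-invariant operator $\mathrm{X}^\beta X_j$ is homogeneous of $\G$-degree $|\beta|_\G+\sigma_j$; since the grading forces $[X_a,X_b]$ to lie in the stratum of weight $\sigma_a+\sigma_b$, normal ordering (PBW) writes $\mathrm{X}^\beta X_j=\sum_\gamma c_\gamma\,\mathrm{X}^\gamma$ with every $\gamma$ satisfying $|\gamma|_\G=|\beta|_\G+\sigma_j$ and $|\gamma|\le|\beta|+1$. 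Evaluating at $0$ and using $|\gamma|_\G\le k$ together with the matching conditions \eqref{Taylor_pol} gives $\mathrm{X}^\beta(X_j P_k(f,0))(0)=\mathrm{X}^\beta(X_j f)(0)$, so the identity follows from the uniqueness of the Taylor polynomial.

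With these in hand I would induct on $[k]=\lfloor k/\sigma_1\rfloor$. The base case $[k]=0$ means $k<\sigma_1$, so $P_k(f,0)\equiv f(0)$ and the desired inequality is exactly the Mean Value Theorem \ref{Teorema_MeanValue}. For the inductive step, I apply Theorem \ref{Teorema_MeanValue} to $R_k(f)$ (which vanishes at $0$) to obtain
$$ |R_k(f)(h)|\le C_0\sum_{j=1}^n|h|^{\sigma_j}\sup_{|z|\le\eta|h|}|X_j R_k(f)(z)|, $$
and rewrite $X_j R_k(f)=X_j f-X_j P_k(f,0)=R_{k-\sigma_j}(X_j f)$ via the commutation identity. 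For indices $j$ with $\sigma_j>k$ the term $R_{k-\sigma_j}(X_j f)=X_j f$ is already a top-order contribution (with $\alpha=e_j$). For $\sigma_j\le k$, I would apply the inductive hypothesis to $X_j f$ at order $k-\sigma_j$; here $\sigma_j\ge\sigma_1\ge 1$ gives $[k-\sigma_j]\le[k]-1$, which both justifies the induction and, crucially, makes the suprema telescope, since the radius $\eta^{[k-\sigma_j]+1}|z|\le\eta^{[k-\sigma_j]+2}|h|\le\eta^{[k]+1}|h|$. Expanding $\mathrm{X}^\beta X_j$ again by PBW converts the derivatives $\mathrm{X}^\beta(X_j f)$ appearing in the hypothesis into $\mathrm{X}^\gamma f$ with $|\gamma|\le[k]+1$ and $|\gamma|_\G=|\beta|_\G+\sigma_j>k$, while the powers of $|h|$ combine to $|h|^{|\beta|_\G+\sigma_j}=|h|^{|\gamma|_\G}$; collecting everything, and absorbing the finitely many $\eta$-powers and PBW coefficients into a new constant $c_k$, yields the claimed bound.

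I expect the main obstacle to be the simultaneous bookkeeping of the two gradings—the Euclidean order $|\alpha|$ and the homogeneous order $|\alpha|_\G$—through the repeated PBW reorderings, and in particular verifying that the nested suprema collapse to a single ball of radius $\eta^{[k]+1}|h|$. This is precisely where the hypothesis $\sigma_1\ge 1$ (hence $[k-\sigma_j]\le[k]-1$) is used, and it is the reason the power $[k]+1$ of $\eta$ appears in the statement.
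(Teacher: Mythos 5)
The paper itself contains no proof of this statement---it is quoted directly from Fischer--Ruzhansky (cited as \cite[Theorem 3.1.49]{FR-Sobolev})---so the only meaningful comparison is with the standard proof in that reference, which your argument reproduces correctly: reduction to $x=0$ by left-invariance, the commutation identity $X_j P_k(f,0)=P_{k-\sigma_j}(X_j f,0)$ proved via graded PBW reordering and uniqueness of the Taylor polynomial, and induction on $[k]$ whose base case is exactly the Mean Value Theorem \ref{Teorema_MeanValue} and whose inductive step telescopes the radii $\eta^{[k-\sigma_j]+2}|h|\leq\eta^{[k]+1}|h|$ using $[k-\sigma_j]\leq[k]-1$. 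Your bookkeeping of the two gradings ($|\gamma|_\G=|\beta|_\G+\sigma_j$ exactly, $|\gamma|\leq|\beta|+1$) and the extension of Definition \ref{def_Taylor_homog} to real orders (needed since $k-\sigma_j$ need not be an integer) are precisely the points the reference also has to handle, and they are handled correctly here.
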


For stratified groups, Theorem \ref{Th_Taylor_homog} can be simplified (see \cite[Corollary 20.3.2,Theorem 20.3.3] {BLU}).

\subsection{An Lp version of the mean value theorem}\label{Lp_version}

As in \cite{burns1991sobolev}, consider the Sobolev space defined in terms of the first-order derivatives 
$$\mathbb{ W}_{p;1}(\G) = \left \{ f:\G\to\C \mid \,  X_1f, \dots, X_n f \in L^p(\G)  \right\}, $$
where $X_jf$ is understood in the weak sense.
The following result is an $L^p$ version of the mean value theorem on homogeneous Lie groups. It extends a result known in Euclidean spaces 
\cite[Proposition 9.3]{Brezis}. See also \cite[Lemma 10]{Saka} in the context of Carnot groups, in which case only derivatives in the first stratum are needed on the right hand side.

\begin{theorem}\label{Lp_mean_value}
	Let $\G$ be and homogeneous Lie group, $1\leq p<\infty$, and  $f\in \mathbb{W}_{p;1}(\G)$. Then, 
	\be \left( \int_{\G} |f(x\cdot y)-f(x)|^p \; dx \right)^{1/p} \lesssim
	\sum_{j=1}^n |y|^{\sigma_j}  \| X_j f \|_{L^p(\G)}. \label{eqn-mean-value-Lp} \ee
\end{theorem}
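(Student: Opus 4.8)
The plan is to prove the inequality first for smooth functions with good decay (say $f \in C_c^\infty(\G)$ or $C^1(\G) \cap W^{1,p}(\G)$) and then pass to general $f \in W^{1,p}(\G)$ by a density argument. For the smooth case, the natural starting point is the pointwise Mean Value Theorem \ref{Teorema_MeanValue}, which gives
\begin{equation*}
|f(x\cdot y)-f(x)| \leq C_0 \sum_{j=1}^n |y|^{\sigma_j} \sup_{|z|\leq \eta|y|} |X_j f(x\cdot z)|.
\end{equation*}
Raising to the $p$-th power, using the finiteness of the sum together with a convexity/power-mean inequality to separate the $n$ terms, and integrating in $x$ over $\G$, I reduce the claim to controlling, for each fixed $j$,
\begin{equation*}
\int_{\G} \Big( \sup_{|z|\leq \eta|y|} |X_j f(x\cdot z)| \Big)^p \, dx
\end{equation*}
by a constant times $\|X_j f\|_{L^p(\G)}^p$ (with a constant independent of $y$).

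The obstacle is precisely this supremum inside the integral: a naive bound $\sup_{|z|\leq\eta|y|}|X_jf(x\cdot z)| \leq \|X_j f\|_\infty$ is useless for an $L^p$ estimate, and the supremum is not itself a convolution. This is exactly where I expect the real work to be, and it is the point at which the Euclidean proof (\cite[Proposition 9.3]{Brezis}) must be adapted to the non-commutative setting. My approach would be to avoid the pointwise supremum altogether and instead integrate along the flow: the first idea is to write $f(x\cdot y)-f(x)$ as an integral of a derivative along a path from $x$ to $x\cdot y$ built from the Jacobian basis. Using the global diffeomorphism $\Phi$ from Lemma \ref{lemma3.1.47}, I can connect $x$ to $x\cdot y$ through a chain of one-parameter subgroups $\exp_\G(t X_j)$; along each segment the directional derivative is exactly $X_j f$ evaluated at an intermediate point. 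This converts the difference into a sum of one-dimensional integrals
\begin{equation*}
f(x\cdot y) - f(x) = \sum_{j} \int_0^{t_j} (X_j f)\big(x \cdot \gamma_j(s)\big) \, ds,
\end{equation*}
where the parameters $t_j$ satisfy $|t_j| \lesssim |y|^{\sigma_j}$ by the quantitative estimate in Lemma \ref{lemma3.1.47}.

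From this integral representation the $L^p$ bound follows cleanly: I apply Minkowski's integral inequality in $L^p(dx)$ to each term, and for each fixed $s$ the integrand is a \emph{translate} $x \mapsto (X_j f)(x\cdot \gamma_j(s))$, whose $L^p(\G)$ norm equals $\|X_j f\|_{L^p(\G)}$ by right-invariance of the Haar measure. Integrating the constant $\|X_j f\|_{L^p(\G)}$ over $s \in [0,t_j]$ produces a factor $|t_j| \lesssim |y|^{\sigma_j}$, yielding exactly the right-hand side of \eqref{eqn-mean-value-Lp}. The homogeneity of the paths $\gamma_j$ under the dilations, together with $|D_\lambda(x)| = \lambda|x|$, is what guarantees the clean power $|y|^{\sigma_j}$ rather than some inhomogeneous combination.

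Finally, to remove the smoothness hypothesis I take $f \in W^{1,p}(\G)$ and approximate it by smooth functions $f_m$ converging to $f$ in $W^{1,p}(\G)$ (via mollification/convolution with an approximate identity, which is standard in this setting). The inequality \eqref{eqn-mean-value-Lp} is stable under this limit: the left-hand side is continuous in $f$ with respect to the $L^p$ norm (for each fixed $y$, using again right-invariance to bound $\|f_m(\cdot\, y) - f(\cdot\, y)\|_{L^p} = \|f_m - f\|_{L^p}$), and the right-hand side converges because $X_j f_m \to X_j f$ in $L^p$. I expect the main subtlety to be ensuring the path-integral representation is legitimate for merely $C^1$ functions and handling the weak-derivative interpretation in the density step, but these are routine once the core integral-along-the-flow identity is set up. The crux of the argument remains replacing the unwieldy $\sup_{|z|\leq\eta|y|}$ from the pointwise mean value theorem by the flow integral, so that right-invariance of Haar measure can be exploited.
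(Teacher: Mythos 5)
Your proposal is correct and follows essentially the same route as the paper's proof: decompose $y=\exp_\G(t_1X_1)\cdots\exp_\G(t_nX_n)$ via Lemma \ref{lemma3.1.47}, telescope, write each increment as an integral of $X_jf$ along the one-parameter subgroup, and use right-invariance of the Haar measure together with $|t_j|\lesssim|y|^{\sigma_j}$, finishing by density. The only cosmetic difference is that you invoke Minkowski's integral inequality where the paper uses Jensen's inequality plus Fubini--Tonelli, which yield the same bound $|t_j|\,\|X_jf\|_{L^p(\G)}$ per segment.
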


\begin{proof}
    Let $f\in C^1(\G)$ with compact support and then proceed by density.
	Following \cite[proof of Proposition 3.1.46]{FR-book} if $y= \exp_{\G}(tX_j)$, we have that
	\begin{align*}
		f(x \cdot y) - f(x) 
		&= \int_0^t \partial_{s^\prime=s}\left\{ f(x \cdot \exp_{\G} (s^\prime X_j)) \right\} \; ds \\
		&= \int_0^t \partial_{s^\prime=0}\left\{ f(x  \cdot \exp_{\G}(s X_j)  \cdot \exp_{\G}(s^\prime X_j)) \right\} \; ds \\
		&= \int_{0}^t X_j f(x  \cdot \exp_{\G}(sX_j)) \; ds \\
		&= \int_{0}^1 X_j f(x \cdot  \exp_{\G}(utX_j)) \, t \; du
	\end{align*}
Then using Jensen inequality,
	$$ |f(x\cdot y) - f(x)|^p \leq |t|^p \int_{0}^1 |X_j f(x \cdot \exp_{\G}(utX_j))|^p  \; du .$$
	Integrating over $\G$ and using Fubini--Tonelli theorem,
	\begin{align*}
		\int_{\G}  |f(x \cdot y) - f(x)|^p \; dx &\leq |t|^p \int_{\G} \left[ \int_{0}^1  |X_j f(x \cdot\exp_{\G}(utX_j))|^p \; du \right] \; dx \\
		&= |t|^p \int_0^1 \left[ \int_{\G} |X_j f(x \cdot\exp_{\G}(utX_j))|^p \; dx  \right] \; du. 
	\end{align*}
	Now, for each fixed $t$, we make the change of variables $z=x  \cdot \exp_{\G}(utX_j)$, and using the invariance of the Haar measure 
	we get
	\begin{align*}
		\int_{\G}  |f(x\cdot y) - f(x)|^p \; dx &\leq |t|^p \int_0^1 \left[ \int_{\G} |X_j f(z)|^p \; dz  \right] \; du \\
		&= |t|^p \int_{\G} |X_j f(z)|^p \; dz .  
	\end{align*}
	As observed in \cite{FR-book}, $|y|=|\exp_{\G}(tX_j)| = |t|^{1/\sigma_j} |\exp_{\G}(X_j)|$. 
	Setting
	$$ C_2 := \max_{k=1,2,\ldots,n} |\exp_{\G}(X_k)|^{-\sigma_k} $$
	we have that
	$$ |t| \leq C_2 |y|^{\sigma_j} $$
	and hence
	\begin{align*}
		\left(   \int_{\G}  |f(x \cdot y) - f(x)|^p \; dx \right)^{1/p} &\leq C_2 |y|^{\sigma_j} \| X_j f \|_{L^p(\G)}  .
	\end{align*}
	
	For the general case, we observe that by  Lemma \ref{lemma3.1.47} each $y \in \G$ 
	can be written uniquely as $y=y_1 \cdot y_2 \cdots y_n$ with $y_j=\exp_{\G}(t_j X_j)$, and we have the estimate
	$$|t_j|^{1/\sigma_j} \leq C_1 |y|, $$
 hence
	$$ |y_j| = |t_j|^{1/\sigma_j} |\exp_{\G}(X_j)| \leq C_1 C_3 |y| $$
	where
	$$ C_3= \max_{k=1,\ldots, n} |\exp_{\G}(X_j)| $$ 
	We consider the sequence of points 
	$$ p_0(x)=x, \quad  p_k(x)= x \cdot y_1 \cdot y_2 \cdot\ldots\cdot   y_k \; \quad  \hbox{for} \; k=1,2,\ldots,n.$$ 
	Hence,
	$$|f(x\cdot y)-f(x)| \leq \sum_{j=1}^n |f(p_j(x))- f(p_{j-1}(x))|.   $$
	We integrate over $\G$ with respect to the variable $x$. By Minkowski inequality
	\begin{equation}\label{auux1}
	\left( \int_{\G} |f(x\cdot y)-f(x)|^p \; dx \right)^{1/p}  \leq \sum_{j=1}^n \left( \int_{\G}  |f(p_j(x)) - f(p_{j-1}(x))|^p \;dx \right)^{1/p}.
	\end{equation}
	We consider one of the terms in the right hand side. Making the change of variables $ z= x\cdot y_1\cdot y_2\cdot \ldots \cdot y_{j-1} $ and using the invariance of the Haar measure, 
	we have that
	\begin{align}\label{auux2}
		\left( \int_{\G}  |f(p_j(x)) - f(p_{j-1}(x))|^p \; dx \right)^{1/p}  &= \left( \int_{\G} |f(z \cdot y_j) - f(z)|^p \; dz\right)^{1/p} \notag \\ 
		&\leq |y_j|^{\sigma_j} \| X_j f \|_{L^p(\G)} \\ 
		&\leq (C_1C_3 |y|)^{\sigma_j} \| X_j f \|_{L^p(\G)} \notag .
	\end{align}
	Summing up all the contributions we get \eqref{eqn-mean-value-Lp}.
\end{proof}

\begin{remark}\cite[Lemma 10]{Saka}\label{remark: saka}
If $\G$ is a Carnot group, and $1\leq p< \infty$ one can obtain
	\be \left( \int_{\G} |f(x\cdot y)-f(x)|^p \; dx \right)^{1/p} \lesssim
	|y|\sum_{j=1}^{n_1} \| X_j f \|_{L^p(\G)}.\ee
taking the sum only in the first stratum.
We recall that the case $p=\infty$ is due to G. Folland
(see \cite[Proposition (5.4)]{Folland}). 
\end{remark}

\subsection{A characterization of the Sobolev spaces defined in terms of first-order derivatives}\label{charcterization_derivatives}

Now we have all the ingredients to prove  a characterization of $\mathbb{W}_{p;1}(\G)$ by means of the 
translations on $\G$.

\begin{theorem}\label{characterization W1,p}
	Let $\G$ be a homogeneous Lie group {with dilations' weights $1\leq\sigma_1\leq \ldots \leq \sigma_n$\footnote{See Section \ref{section-pelim}}}. When $1<p<\infty$, $f \in \mathbb{W}_{p;1}(\G)$ if and only if $f \in L^p(\G)$ and there exists a constant $C=C(f)$ depending of $f$ such that 
{		\begin{equation*}\label{newteo11}
\left( \int_{\G} |f(x\cdot y)-f(x)|^p \; dx \right)^{1/p} \leq C \; \sum_{j=1}^n  |y_j|^{\sigma_j}
 	\end{equation*}
where  $y\in \G$ is (uniquely) written as 
$y=y_1 \cdot y_2  \cdot\ldots\cdot  y_n$ with $y_j=\exp_{\G}(t_j X_j)$. }
\end{theorem}

\begin{proof}

The {direct part of our assertion} follows from {the proof of  Theorem \ref{Lp_mean_value} (in fact, from \eqref{auux1} and \eqref{auux2})} taking $C(f)=n\max_{1\leq j\leq n}  \| X_j f \|_{L^p(\G)}$.  

For the converse, let $f\in L^p(\G)$. Consider $\varphi\in C^1(\G)$ with compact support  and let $y\in\G$. By the invariance of the Haar measure, we have
\begin{equation*}
    \int_\G \left(f(x\cdot y)-f(x)\right)\varphi(x) \, dx= \int_\G f(x)\left(\varphi(x\cdot y^{-1})-\varphi(x)\right) \, dx.
\end{equation*}
Also, using Hölder inequality and the hypothesis, one obtains
{ 
$$\left|\int_\G \left(f(x\cdot y)-f(x)\right)\varphi(x) \, dx\right|\lesssim\sum_{j=1}^n |y_j|^{\sigma_j} \|\varphi\|_{L^{p'}(\G)} $$
where $\frac{1}{p}+\frac{1}{p'}=1$, and $y \in \G$ is written uniquely as $y=y_1 \cdot y_2 \cdot\ldots\cdot y_n$ with $y_j=\exp_{\G}(t_j X_j)$ (Lemma \ref{lemma3.1.47}).  
For each $1\leq j\leq n$, consider $y^{-1}=\exp_{\G}(tX_j)$ for $t\in\R$ with $|t|\leq 1$. Therefore,
\begin{equation}\label{new1}
    \left|\int_\G f(x)\left(\varphi(x\cdot y^{-1})-\varphi(x)\right) \, dx\right|\lesssim |y|^{\sigma_{j}}\|\varphi\|_{L^{p'}(\G)}.
\end{equation} 
Also
$$|y|=|y^{-1}|=|\exp_\G(tX_{j})|=|t|^{1/\sigma_{j}}|\exp_\G(X_{j})|\lesssim|t|^{1/\sigma_{j}},$$
and so
$$ |y|^{\sigma_{j}}\lesssim|t|.$$
 Dividing by $t$ in \eqref{new1} and letting $t \to 0$, we obtain}
\begin{equation}\label{aux-brezis}
    \left|\int_\G f(x) \, X_j\varphi (x) \, dx\right|\lesssim \|\varphi\|_{L^{p'}(\G)}. 
\end{equation}
Note that \eqref{aux-brezis} holds for every vector field in the given Jacobian basis of $\G$. In order to end the proof we will show that for $f\in L^p(\G)$ satisfying \eqref{aux-brezis} for every $\varphi\in C^1(\G)$ with compact support and for every $1\leq i\leq n$, it holds that $f\in \mathbb{W}_{p;1}(\G)$.

For each $1\leq i\leq n$, consider the linear functional
\begin{equation}
    \varphi\mapsto \int_\G f(x) \, X_i\varphi(x) \,  dx 
\end{equation}
defined in the space of $C^1(\G)$ functions with compact support, which a dense subspace of $L^{p'}(\G)$ (since $p'<\infty$), and it is continuous for the
$L^{p'}(\G)$ norm (by \eqref{aux-brezis}). Therefore, it extends to a bounded linear functional defined on all of $L^{p'}(\G)$. 
By Riesz representation theorem, there exists $g_i \in L^p(\G)$ such that 
\begin{equation*}
    \int_\G f(x) \, X_i\varphi(x) \,  dx =  \int_\G \varphi(x) g_i(x) \, dx 
\end{equation*}
for every $\varphi\in L^{p'}(\G)$. Thus, each weak derivative $X_if$ is $-g_i$ which belongs to $L^p(\G)$ and so $f\in \mathbb{W}_{p;1}(\G)$.
\end{proof}

\subsection{Second order mean value inequalities}\label{subsec_2nd_mv}
In this subsection we prove two second order mean value inequalities, one is for smooth functions and the other is an $L^p$ version.

\begin{theorem}\label{MeanValue_primero}
	Let $\G$ be a homogeneous Lie group and fix a homogeneous quasi-norm $|\cdot|$ on $\G$. There exist constants $\eta>0$  such that 
	\begin{equation}\label{2nd_order_beta} 
		|\Delta^2_y f(x)| \lesssim \sum_{|\beta|\leq 2, \,  |\beta|_\G>1}|y|^{|\beta|_\G}\sup_{|z|\leq \eta|y| }|X^\beta f(x \cdot z)|
	\end{equation}
	for all $f \in C^{2}(\G)$ and every $x,y \in \G$, where $\Delta^2_y f$ is the second order difference given by \eqref{Delta^2_y}. The 
constant $\eta$ comes from the Mean Value Theorem \ref{Teorema_MeanValue}. 
\end{theorem}
\begin{proof}
Let $P_{x,1}^f$ be the left-Taylor polynomial od $f$ centered at $x$ with homogeneous degree less or equal to $1$. If we compute 
$P_{x,1}^f(y)+P_{x,1}^f(y^{-1})$ we get 
$$P_{x,1}^f(y)+P_{x,1}^f(y^{-1})=2f(x)$$
since the monomials of degree $1$ annihilates (notice that in this case, the homogeneous degree coincides with the usual degree, since $1\leq \sigma_1\leq \sigma_j$ for all $j$).
Indeed, in general if $q:\G\to \R$ is a monomial of degree $1$, let $y=\exp_\G(Y)$ and so
$$q( y^{-1} ) = q( (\exp_\G(Y))^{-1} ) =q( \exp_\G(-Y) ) = - q( \exp_\G(Y) ) = - q( y )$$
since $q\circ\exp_\G$ is a  polynomial in the usual sense.

Hence,
\begin{align*}
    \Delta_y^2f(x)&=f(x\cdot y)+f(x\cdot y^{-1})-2f(x)\\
    &=f(x\cdot y)-P_{x,1}^f(y)+f(x\cdot y^{-1})-P_{x,1}^f(y^{-1})\\
    &=R_{x,1}^f(y)+R_{x,1}^f(y^{-1})
\end{align*}
Applying Theorem \eqref{Th_Taylor_homog} with  $k=1$ (and so $[1]=1$), and using $|y|=|y^{-1}|$, we obtain
\begin{align*}
    |\Delta_y^2f(x)|&\leq|R_{x,1}^f(y)|+|R_{x,1}^f(y^{-1})|\\
    &\lesssim \sum_{|\beta|\leq [1]+1, |\beta|_\G>1}|y|^{|\beta|_\G}\sup_{|z|\leq \eta|y| }|X^\beta f(x \cdot z)|+\\
    &\qquad +\sum_{|\beta|\leq [1]+1, |\beta|_\G>1}|y^{-1}|^{|\beta|_\G}\sup_{|u|\leq \eta|y^{-1}| }|X^\beta f(x \cdot u)|\\
&=2 \sum_{|\beta|\leq 2, \,  |\beta|_\G>1}|y|^{|\beta|_\G}\sup_{|z|\leq \eta|y| }|X^\beta f(x \cdot z)|
\end{align*}
\end{proof}

We finish this subsection with a second order $L^p$ mean value inequality.

\begin{theorem}\label{teo_second order_for_Sobolev}
Let $\G$ be a homogeneous Lie group. For for $1\leq p<\infty$ if $f\in \mathbb{W}_{p;2}(\G)$, then
\begin{equation*}
	\|\Delta_y^2f\|_{L^p(\G)}\leq \max\{|y|^{2}, \, |y|^{2\sigma_n}\} \sum_{j,k=1}^n\left\|X_jX_kf\right\|_{L^p(\G)}, 
\end{equation*}	
and
\begin{equation}\label{desig_2do_orden}
|\Delta_y^2f(x)|\lesssim \max\{|y|^{2}, \, |y|^{2\sigma_n}\} \sum_{j,k=1}^n\left\|X_jX_kf\right\|_{L^\infty(\G)} \qquad \forall x\in \G     
\end{equation}

\end{theorem}

\begin{proof}
Fix $y\in \G$. 
Since $\G$ is connected and nilpotent, the exponential map is a global diffeomorphism.  In particular,  $y$ is of the form $y=\exp_\G(Y)$ for some $Y=\sum_{j=1}^n c_jX_j\in \mathfrak{g}$. 
We have $y^{-1}=\exp_\G(-Y)$. Applying the Fundamental Theorem of Calculus and differentiating in the manifold $\G$ as in the proof of Theorem \ref{Lp_mean_value} we have
\begin{align*}
 	\left[f(x \cdot y) - f(x)\right] \, + \,  & \left[f(x\cdot y^{-1})-f(x)\right]\\
	=&\int_0^{1} \frac{d}{dt}_{|_{t=s}}[f(x\cdot \exp_{\G}(tY))+
 	f(x\cdot \exp_{\G}(-tY))] \, ds\\
	=&\int_0^1 Yf(x\cdot \exp_{\G}(sY))-Yf(x\cdot \exp_{\G}(-sY))\, ds.
\end{align*}    
Repeating the argument we obtain    
\begin{align*}
	\Delta_y^2f(x)
	=&\int_0^1\int_{-s}^s Y^2f(x\cdot \exp_{\G}(zY)) \, dz\, ds
	=\int_{-1}^1\int_{|z|<1} Y^2f(x\cdot \exp_{\G}(zY)) \, ds\, dz\\
	=&\int_{-1}^1(1-|z|)\, Y^2f(x\cdot \exp_{\G}(zY)) \, dz.
\end{align*}
For $p=\infty$, we have
$$|\Delta_y^2f(x)|\leq 2 \|Y^2f\|_{L^\infty(\G)} \qquad \forall x\in\G.$$
For $1\leq p<\infty$, integrating on $\G$ and using the invariance of the Haar measure we have,
\begin{align*}
	\left(\int_\G |\Delta_y^2f(x)|^p\, dx\right)^{1/p}
	&\leq \int_{-1}^1(1-|z|)\left(\int_{\G}|Y^2f(x\cdot \exp_{\G}(zY))|^p \, dx\right)^{1/p}\, dz\\
	&=\int_{-1}^1(1-|z|)\left(\int_{\G}|Y^2f(x)|^p \, dx\right)^{1/p}\, dz\\
	&=\left(\int_{\G}|Y^2f(x)|^p \, dx\right)^{1/p}.
\end{align*}
Since $y=\exp_\G(Y)$ with $Y=\sum_{j=1}^nc_jX_j$, we have
\begin{align*}
	|Y^2f(x)|&= \left|\left(\sum_{j=1}^n c_jX_j\right)\left(\sum_{k=1}^{n}c_kX_k\right)f(x) \right|\leq\max_{1\leq j\leq n}|c_j|^2\sum_{j,k=1}^n|X_jX_kf(x)|.
\end{align*}
So far we have only used the fact that $\G$ is nilpotent.
If $\G$ is a homogeneous group, considering a homogeneous quasi-norm as in \eqref{equasi_norm} and separating into two cases, $|y|\leq 1$ and $|y|>1$, we obtain \eqref{eqn-mean-value-Lp}. In fact, consider for example the homogeneous quasi-norm
$$|y|=\max_{1\leq j\leq n}|c_j|^{1/\sigma_j}.$$
If $|y|\leq1$, then $|c_{j}|\leq 1$ for all $1\leq j\leq n$, and since $\sigma_j\geq 1$ for every $1\leq j\leq n$,
$$|c_{j}|=\left(|c_{j}|^{1/\sigma_j}\right)^{\sigma_j}\leq |c_{j}|^{1/\sigma_j} \qquad \forall \, 1\leq j\leq n.$$
So,
\begin{align*}
    |Y^2f(x)|&\leq\max_{1\leq j\leq n}|c_j|^2\sum_{j,k=1}^n|X_jX_kf(x)|\leq\left(\max_{1\leq j\leq N}|c_j|^{1/\sigma_j}\right)^{2}\sum_{j,k=1}^n|X_jX_kf(x)|\\
    &= |y|^{2}\sum_{j,k=1}^n|X_jX_kf(x)|.
\end{align*}
If $|y|>1$, then
$$J=\{j\in\{1,\ldots,n\} :\, |c_j|>1\}\not=\emptyset,$$
and $1\leq\sigma_1\leq\ldots \leq \sigma_n$, for every $j\in J$  we have 
$$|c_{j}|=\left(|c_{j}|^{1/\sigma_j}\right)^{\sigma_j}\leq \left(|c_{j}|^{1/\sigma_j}\right)^{\sigma_n}. $$
So,
\begin{align*}
    |Y^2f(x)|&\leq\max_{1\leq j\leq n}|c_j|^2\sum_{j,k=1}^n|X_jX_kf(x)|=\max_{j\in J}|c_j|^2\sum_{j,k=1}^n|X_jX_kf(x)|\\
    &\leq\left(\max_{j\in J}|c_j|^{1/\sigma_j}\right)^{2 \sigma_n}\sum_{j,k=1}^n|X_jX_kf(x)|=\left(\max_{1\leq j\leq n}|c_j|^{1/\sigma_j}\right)^{2 \sigma_n}\sum_{j,k=1}^n|X_jX_kf(x)|\\
    &= |y|^{2\sigma_n}\sum_{j,k=1}^n|X_jX_kf(x)|.
\end{align*}
\end{proof}

\section{Fractional powers of a positive Rockland operator}\label{sec_frac_power_rock}

\subsection{Preliminaries: Positive Rockland operators and  heat kernel}

In what follows  we will consider the powers of a Rockland
operator in order to define the potential Sobolev spaces. Rockland
operators generalize sub-Laplacians to
the non-stratified but still homogeneous (graded) setting. Although the definition of Sobolev spaces uses a fixed positive Rockland operator, they are actually independent from this choice (in this article we give an alternative proof of this fact, see Corollary \ref{indepencdencia_Rockland}). These spaces generalize the classical functional spaces on the Euclidean space $\R^n$ introduced in
\cite{AS} and \cite{Calderon}. 

On a Carnot group on $\G$, if we
consider Jacobian generators $X_1,X_2, \ldots, X_{n_1}$ (that is, a basis of the first stratum), the associated sub-Laplacian is
\begin{equation}\label{sub-lap}
   \Delta_{\G} = -\sum_{j=1}^{n_1} X_j^2. 
\end{equation}
Any sub-Laplacian on a stratified Lie group is a Rockland operator
of homogeneous degree $2$ (see \cite{Folland} for details). Moreover, \eqref{sub-lap} is a positive operator.

On homogeneous Lie groups, Rockland operators are left-invariant hypoelliptic differential operators that are homogeneous of positive degree.  The existence of a Rockland differential operator on a homogeneous Lie group
implies that the group must admit a graded structure (see \cite[Lemma 2.2]{ter1997spectral}). Moreover, any graded Lied groups admits a Rockland operator. For example,
\begin{equation}\label{1stRockl}
 \sum_{j=1}^{n}(-1)^{\frac{\sigma_{0}}{\sigma_{j}}} X_{j}^{2 \frac{\sigma_{0}}{\sigma_{j}}}   
\end{equation}
  is a Rockland operator on a graded Lie group $\G$ where $\{X_j\}_{j=1}^n$ is a Jacobian basis, 
 $\{\sigma_j\}_{j=1}^n$ are the dilations' weights and   $\sigma_0$ is any common multiple of $\sigma_1,\dots,\sigma_n$ (see \cite[Corollary 4.1.10]{FR-book}). As a historical note, the concept of a Rockland operator initially appeared in the the seminal paper on hypoellipticity on the Heisenberg group by C. Rockland  \cite{rockland1978hypoellipticity}, and was later extended to the setting of nilpotent groups by B. Helffer and J. F. Nourrigat in the articles   \cite{helffer1978hypoellipticite, helffer1979caracterisation}. As for harmonic analysis related to  hypoelliptic operators on arbitrary simply connected nilpotent Lie groups we can mention for example the article \cite{nagel1990fundamental}.

From now on, let $\G$ be a graded Lie group. 
A positive Rockland operator $\mathcal{R}$ is an element in the univerSsal enveloping algebra of $\mathfrak{g}$, and from the  
Poincaré-Birkhoff-Witt theorem $\mathcal{R}$ will have some expression of the form
$$ \mathcal{R} = \sum_{\alpha \in \N_0^n,|\alpha|_\G=\nu} c_\alpha \, \mathrm{X}^\alpha $$
for some $c_\alpha \in \mathbb{C}$. When the coefficients $c_\alpha$ are real, like in the example in 
\cite[Lemma 4.1.8]{FR-book}, the heat kernel will be real-valued. We shall assume that. Moreover, we are interested in Rockland operators  that are formally self-adjoint as an element of the universal enveloping algebra $\mathfrak{u}(\mathfrak{g})$, and so we will work directly with their self-adjoint extensions on  $L^2(\G)$ (see \cite[Proposition 4.1.15 and Corollary 4.1.16]{FR-book}). Besides, since every graded groups admits a positive Rockland operator (for e.g. \eqref{1stRockl} is a positive operator), then  we fix $\mathcal{R}$ to be a self-adjoint positive operator. It can be seen that the homogeneous degree $\nu$ of $\mathcal{R}$ is always even.

There exists a projection-valued measure  $P_\lambda$ such that (see \cite[Corollary 4.1.16]{FR-book}) 
\begin{equation}\label{frac_calculus_esp_th}
\mathcal{R} = \int_0^\infty \lambda \; dP_\lambda .    
\end{equation}
Then, consider the heat semigroup  $\{T_t\}_{t>0}$
defined by the functional calculus 
$$T_tf:=\int_0^\infty e^{-t\lambda} \, dP_\lambda \qquad (t>0)$$
as in \cite[Section 4.2.2]{FR-book}. It is a contraction semigroup of operators on $L^2(\G)$. 
The corresponding convolution kernel $h_t$, $t > 0$, that is such that for every $f\in L^2(\G)$
$$ T_t f(x) = f*h_t(x)=\int_{\G} f(y) h_t(y^{-1}\cdot x) \; dy \qquad (x\in\G),$$
is called the heat kernel associated to $\mathcal{R}$.
The existence of the heat kernel has been proved by Folland and Stein 
\cite[Theorem 4.25]{Folland-Stein}.
Its main properties are summarized in the following theorems. Previous studies in this regard can be found in \cite{dziubanski1989semigroups,  hebisch1989sharp} where the particular case \eqref{1stRockl} was considered. 

\begin{theorem}\cite[Theorem 4.2.7]{FR-book}\label{ht_properties}
	Then heat kernels $h_t$ associated with a positive self-adjoint Rockland operator of homogeneous degree $\nu$ on a graded Lie group $\G$ satisfy the following properties:
		\begin{enumerate}
		\item $h_t * h_s= h_{t+s} $ for all $s,t>0$, and each function $h_t$ is Schwartz;
		\item 
		$h_{r^\nu t}(rx)=r^{-Q} h_t(x) \; \text{for all} \;  x\in \G, \; r,t>0$. 
		In particular, 
			$h_t(x) = t^{-\frac{Q}{\nu}} h_1(t^{-\frac{1}{\nu}}x)$,
		and also 
		$\mathrm{X}^\alpha h_t(x) = r^{Q-|\alpha|_\G} \mathrm{X}^\alpha h_{r^{\nu}t}(rx)$; 
		\item $h_t(x)=h_t(x^{-1})$ for all $x\in \G$, $t>0$;
		\item $\int_ \G h_t(x) \, dx=1$ for all $t>0$.
	\end{enumerate}
\end{theorem}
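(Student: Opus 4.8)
The plan is to deduce every property from the spectral definition $T_t=\int_0^\infty e^{-t\lambda}\,dP_\lambda$, combined with the homogeneity and self-adjointness of $\mathcal{R}$, once the existence and smoothness of the convolution kernel $h_t$ are in hand. Smoothness is the entry point: $\mathcal{R}$ is hypoelliptic, so $\partial_t+\mathcal{R}$ is hypoelliptic and the solution $T_tf$ of the heat equation, together with its kernel $h_t$, is smooth for $t>0$. The semigroup identity in (1), $h_{t+s}=h_t*h_s$, is then immediate: $e^{-t\lambda}e^{-s\lambda}=e^{-(t+s)\lambda}$ gives $T_tT_s=T_{t+s}$, and translating into kernels via associativity of convolution yields the claim, while the commutativity $T_tT_s=T_sT_t$ gives $h_t*h_s=h_s*h_t$. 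For (2), set $\delta_rf:=f\circ D_r$; since each $\mathrm{X}^\alpha$ entering $\mathcal{R}$ satisfies $|\alpha|_\G=\nu$, homogeneity gives $\delta_r^{-1}\mathcal{R}\delta_r=r^\nu\mathcal{R}$, whence by functional calculus $\delta_r^{-1}T_t\delta_r=T_{r^\nu t}$. Writing this operator identity as convolution integrals and changing variables through the automorphism $D_r$ (whose Jacobian is $r^Q$) matches the kernels and produces $h_{r^\nu t}(rx)=r^{-Q}h_t(x)$; the specialization $r=t^{-1/\nu}$ and an application of the homogeneous operator $\mathrm{X}^\alpha$ give the two ``in particular'' statements.

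Property (3) rests on self-adjointness: each $T_t=e^{-t\mathcal{R}}$ is self-adjoint on $L^2(\G)$, and the adjoint of right convolution by $h$ is right convolution by $x\mapsto\overline{h(x^{-1})}$, so $h_t(x)=\overline{h_t(x^{-1})}$; as $\mathcal{R}$ has real coefficients, $h_t$ is real-valued, hence $h_t(x)=h_t(x^{-1})$. For (4), differentiate under the integral sign: $\frac{d}{dt}\int_\G h_t=-\int_\G\mathcal{R}h_t$, and since $\mathcal{R}$ is a linear combination of $\mathrm{X}^\alpha$ with $|\alpha|\ge 1$ and $\G$ is unimodular, each $\int_\G\mathrm{X}^\alpha h_t=0$; thus $t\mapsto\int_\G h_t$ is constant. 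Its value $1$ is identified either by evaluating the group Fourier transform at the trivial representation (where $\mathcal{R}\mapsto 0$, so $T_t\mapsto 1$) or by letting $t\to 0^+$, where $T_t\to\mathrm{Id}$ strongly so that $h_t$ is an approximate identity.

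The real obstacle is the second half of (1): that $h_t$ is Schwartz, not merely smooth and integrable. The plan is to reduce to $t=1$ via the scaling in (2) and then bound each seminorm $\sup_x|x^\beta\,\mathrm{X}^\alpha h_1(x)|$. Smoothness comes from hypoellipticity; the decay comes from combining spectral estimates with the Sobolev embedding on $\G$. Concretely, for every $k$ the function $\lambda\mapsto\lambda^k e^{-\lambda}$ is bounded, so $\mathcal{R}^k h_1\in L^2(\G)$, and incorporating homogeneous polynomial weights $|x|^m$ (controlled through the homogeneous structure and the relation $\mathcal{R}h_1=-\partial_t h_t|_{t=1}$) keeps these quantities in $L^2$; the Sobolev embedding then converts enough such $L^2$ bounds into $L^\infty$ bounds, yielding arbitrary polynomial decay of every derivative and so $h_1\in\mathcal{S}(\G)$. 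This quantitative estimate --- essentially the Folland--Stein heat-kernel bound --- is the technically heavy ingredient, whereas (2)--(4) are soft consequences of spectral theory, homogeneity, and self-adjointness.
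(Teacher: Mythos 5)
This theorem is not proved in the paper at all: it is quoted from Fischer--Ruzhansky \cite{FR-book} (their Theorem 4.2.7), so there is no internal proof to compare your attempt against. Judged on its own merits, your treatment of the ``soft'' properties is correct and is the standard argument: (1) follows from $e^{-t\lambda}e^{-s\lambda}=e^{-(t+s)\lambda}$ and associativity of convolution; (2) follows from the conjugation identity $\delta_r^{-1}\mathcal{R}\delta_r=r^{\nu}\mathcal{R}$, and since $\delta_r$ is a scalar multiple of a unitary operator on $L^2(\G)$, the functional-calculus step $\delta_r^{-1}e^{-t\mathcal{R}}\delta_r=e^{-tr^{\nu}\mathcal{R}}$ is legitimate; (3) follows from self-adjointness plus real coefficients of $\mathcal{R}$, which the paper explicitly assumes; and (4) follows from $\int_{\G}Xg=0$ for left-invariant $X$ on a unimodular group together with the $t\to 0^{+}$ identification of the constant. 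These are all consistent with how the cited source proceeds.

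The genuine gap is the Schwartz property, which you correctly identify as the hard point but do not actually close. Two concrete problems. First, the inference ``$\lambda^{k}e^{-\lambda}$ is bounded, so $\mathcal{R}^{k}h_1\in L^2(\G)$'' presupposes that the convolution kernel exists and lies in $L^2$: boundedness of $\mathcal{R}^{k}e^{-\mathcal{R}}$ on $L^2(\G)$ says nothing about its convolution kernel until you know, say, $h_{1/2}\in L^2(\G)$ and factor $\mathcal{R}^{k}h_1=\bigl(\mathcal{R}^{k}e^{-\mathcal{R}/2}\bigr)h_{1/2}$, so kernel existence and basic regularity must be established beforehand. Second, and more seriously, the polynomial decay is the entire analytic content of the theorem, and your mechanism for it --- that ``incorporating homogeneous polynomial weights $|x|^m$ \dots keeps these quantities in $L^2$'' --- is an assertion, not an argument: spectral calculus controls powers of $\mathcal{R}$ applied to $h_1$ but gives no control of multiplication by coordinate polynomials, because $x^{\beta}$ does not commute with $\mathcal{R}$, and controlling the iterated commutators $[\mathcal{R},x^{\beta}]$ is precisely where the deep input enters (Hulanicki's theorem that $f(\mathcal{R})$ has Schwartz kernel for $f\in\mathcal{S}(\R)$, or the Folland--Stein weighted heat-kernel estimates). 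There is also a latent circularity: the Sobolev embedding on a graded group that you invoke to convert $L^2$ bounds into $L^{\infty}$ bounds is itself established in \cite{FR-book} by means of the heat kernel, so your argument would need an embedding proved independently of $h_t$ (e.g., via the Helffer--Nourrigat hypoellipticity estimates). As it stands, the proposal proves (1)--(4) modulo Schwartz decay, and defers the decay to exactly the literature the paper cites.
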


\begin{theorem} \cite[Theorem 10 and Remark 11]{DHZ} or \cite[Theorem 1.1]{ATER} \label{theorem-DHZ}
	Let $h_t$ be the heat kernels associated with a positive self-adjoint Rockland operator of homogeneous degree $\nu$ on a graded Lie group $\G$.
	There exist a constant $c>0$ such that for every multi-index $\beta$ and $s\in\N_0$ there exists a constant $C_{\beta,s}$ such 
	that
	$$ |\partial_{t}^s X^\beta h_t(x)| \leq C_{\beta,s} \; t^{-s-Q/\nu-|\beta|_\G/\nu} \; 
	\exp\left(-c |x|^{\frac{\nu}{\nu-1}}/ t^{1/(\nu-1)}   \right) $$
for every  $X\in\mathfrak{g}$ and  $x\in\G$.
\end{theorem}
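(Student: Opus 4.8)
The plan is to reduce the full statement, via scaling and the heat equation, to a single sub-Gaussian bound for the \emph{spatial} derivatives of $h_1$, and to derive that bound from a weighted $L^2$ estimate for the semigroup combined with the hypoellipticity of $\mathcal R$. First I would eliminate the time derivatives: since $h_t$ is the kernel of $T_t=e^{-t\mathcal R}$ it solves $\partial_t h_t=-\mathcal R h_t$, so $\partial_t^s\mathrm X^\beta h_t=(-1)^s\,\mathrm X^\beta\mathcal R^s h_t=:Yh_t$, where $Y=\mathrm X^\beta\mathcal R^s$ is left-invariant and homogeneous of degree $d=|\beta|_\G+s\nu$. The scaling property in Theorem~\ref{ht_properties} then gives $Yh_t(x)=t^{-(Q+d)/\nu}\,(Yh_1)(D_{t^{-1/\nu}}x)$ with $|D_{t^{-1/\nu}}x|=t^{-1/\nu}|x|$, so that the claimed inequality is equivalent to: for every homogeneous left-invariant $Y$ there is $C_Y$, with a rate $c$ \emph{independent of $Y$}, such that $|Yh_1(x)|\le C_Y\exp(-c|x|^{\nu/(\nu-1)})$. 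The scale invariance of the quantity $|x|^{\nu/(\nu-1)}/t^{1/(\nu-1)}$ is exactly what makes this reduction consistent.

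The core is a weighted (Davies--Gaffney type) estimate. Fixing a smooth real weight comparable to the quasi-norm, $\psi(x)\approx|x|$, with all derivatives $\mathrm X^\gamma\psi$ bounded, I would prove
\[
\big\|e^{\rho\psi}\,T_t\,e^{-\rho\psi}\big\|_{L^2(\G)\to L^2(\G)}\le e^{C\rho^\nu t}\qquad(\rho>0,\ t>0).
\]
This follows from a Gronwall/energy argument: differentiating $\|e^{\rho\psi}T_te^{-\rho\psi}f\|_{L^2}^2$ in $t$ brings in the conjugated operator $\mathcal R_\rho=e^{\rho\psi}\mathcal R e^{-\rho\psi}$, a perturbation of $\mathcal R$ by iterated commutators with $\psi$ whose size is a polynomial in $\rho$ of degree $\nu$; controlling the numerical range of $\mathcal R_\rho$ yields the factor $e^{C\rho^\nu t}$. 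The appearance of $\rho^\nu$ (rather than the $\rho^2$ of the classical Gaussian case) is forced by the order $\nu$ of $\mathcal R$ and is what produces the final exponent.

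Next I would combine this with on-diagonal bounds from hypoellipticity. Since $\mathcal R$ is a positive Rockland operator it satisfies subelliptic estimates, and with scaling these give $\|YT_t\|_{L^2\to L^\infty}\lesssim t^{-(Q/2+d)/\nu}$ and $\|T_t\|_{L^1\to L^2}\lesssim t^{-Q/(2\nu)}$ (equivalently, $Yh_1\in L^\infty$ because $h_1$ is Schwartz by Theorem~\ref{ht_properties}). Factorizing $YT_1=(YT_{1/2})\,T_{1/2}$ and inserting the weight in between,
\[
e^{\rho\psi}\,YT_1\,e^{-\rho\psi}=\big(e^{\rho\psi}\,YT_{1/2}\,e^{-\rho\psi}\big)\big(e^{\rho\psi}\,T_{1/2}\,e^{-\rho\psi}\big),
\]
bounds the kernel pointwise by $|Yh_1(x)|\le C_Y\,e^{-\rho\psi(x)}\,e^{C\rho^\nu}$. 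Optimizing in $\rho>0$, the maximum of $\rho\,\psi(x)-C\rho^\nu$ is attained at $\rho\approx\psi(x)^{1/(\nu-1)}$ and equals a constant multiple of $\psi(x)^{\nu/(\nu-1)}\approx|x|^{\nu/(\nu-1)}$; this yields $|Yh_1(x)|\le C_Y\exp(-c|x|^{\nu/(\nu-1)})$ with $c$ independent of $Y$, and undoing the scaling recovers the stated inequality.

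The hardest step is the weighted $L^2$ bound when $\nu>2$. For a sub-Laplacian ($\nu=2$) the energy identity is an elementary integration by parts, $[\mathcal R,e^{\rho\psi}]$ is first order, and one recovers the classical Gaussian bound. For a general Rockland operator $\mathcal R$ need not be a sum of squares, its (even) order $\nu$ is large, and $\mathcal R_\rho$ is a genuinely non-self-adjoint perturbation whose many iterated commutators with $e^{\rho\psi}$ must all be estimated; showing that its numerical range stays in a fixed sector with the sharp $\rho^\nu$ dependence is the technical heart of the matter, and is precisely what the cited works establish (an alternative route is to prove sectorial bounds for the complex-time semigroup $T_z$, $\Re z>0$, and to read off the sub-Gaussian decay by a saddle-point/Phragmén--Lindelöf argument).
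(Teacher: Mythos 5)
First, a point of reference: the paper never proves this statement at all --- it is imported by citation from the references [DHZ] and [ATER] --- so there is no internal proof to compare against, and the relevant comparison is with those works. Your overall strategy (reduce to $t=1$ by the heat equation and dilations, noting that the rate $c$ may only come from a $Y$-independent weighted estimate; then run Davies' exponential-perturbation method with $e^{C\rho^\nu t}$ growth for the twisted semigroup; then optimize in $\rho$ to produce the exponent $\nu/(\nu-1)$) is indeed the strategy of that literature, and your scaling reduction is correct.

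However, there is a genuine gap in the composition step, not just an omitted routine detail. From the factorization
\begin{equation*}
e^{\rho\psi}\,YT_1\,e^{-\rho\psi}=\bigl(e^{\rho\psi}\,YT_{1/2}\,e^{-\rho\psi}\bigr)\bigl(e^{\rho\psi}\,T_{1/2}\,e^{-\rho\psi}\bigr),
\end{equation*}
you bound the second factor by the weighted $L^2\to L^2$ estimate, but the first factor by the \emph{unweighted} smoothing bound $\|YT_{1/2}\|_{L^2\to L^\infty}\lesssim 1$. The weights do not cancel inside the first factor: its kernel is $e^{\rho(\psi(x)-\psi(y))}\,(Yh_{1/2})(y^{-1}\cdot x)$, and testing on functions concentrated near a point shows that any bound on its $L^2\to L^\infty$ (or even $L^2\to L^2$) norm already forces decay of the type $|Yh_{1/2}(z)|\lesssim e^{-\rho\psi(z)}$ --- exactly the statement being proved, so the step is circular. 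The same circularity appears if you try to produce the needed $\|e^{\rho\psi}T_{1/2}e^{-\rho\psi}\|_{L^1\to L^2}$ bound by splitting $T_{1/2}=T_{1/4}T_{1/4}$. What is actually required, and what the cited papers establish, is a family of \emph{weighted smoothing} estimates for the twisted semigroup itself, e.g.
$\|e^{\rho\psi}T_t e^{-\rho\psi}\|_{L^2\to L^\infty}\lesssim t^{-Q/(2\nu)}e^{C\rho^\nu t}$ and
$\|e^{\rho\psi}\,YT_t\,e^{-\rho\psi}\|_{L^2\to L^2}\lesssim t^{-d/\nu}e^{C\rho^\nu t}$,
obtained from a G{\aa}rding inequality for the conjugated operator $\mathcal{R}_\rho$ with explicit $\rho^\nu$-dependence combined with Sobolev embedding and analyticity of the twisted semigroup; your ingredient list (weighted $L^2\to L^2$ plus unweighted smoothing) is strictly weaker and does not close the argument. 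Since in addition you explicitly defer the numerical-range/G{\aa}rding bound for $\mathcal{R}_\rho$ --- the genuinely hard point when $\nu>2$, where $\mathcal{R}$ is not a sum of squares --- to the cited works, the proposal is an accurate road map of the known proof rather than a proof.
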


A similar result holds in the setting of Lie groups of polynomial growth (see \cite{VSCC}, Theorem VIII.2.9).

For $1 \leq p < \infty$, we denote by $\mathcal{R}_p$ the realization of $\mathcal{R}$ on $L^p(\G)$, that is, 
the operator such that $-\mathcal{R}$ is the infinitesimal
generator of the semigroup of operators $T_t$ on $L^p(\G)$ for $p\in[1,\infty)$.  In particular, $\mathcal{R}_2=\mathcal{R}$ (the self-adjoint extension on $L^2(\G)$). It can be proved that $T_t(f)= f*h_t$  defines a strongly continuous semigroup on $L^p(\G)$ for $1\leq p<\infty$, which is also
equibounded, as a consequence of Young's inequality and property (2) in Theorem \ref{ht_properties}:
$$\|f*h_t\|_{L^p(\G)}\leq \|h_1\|_{L^1(\G)}\|f\|_{L^p(\G)} \qquad \forall \, t>0, \quad \forall \, f\in L^p(\G).$$
However, as is pointed out by V. Fischer and M. Ruzhansky in \cite{FR-Sobolev}, the corresponding heat semigroup might neither be a contraction on $L^p(\G)$ spaces nor preserve positivity; this shows that the heat semigroup of $\mathcal{R}$ might not be sub-markovian and this rules out many of the techniques used in the case of sub-Laplacians.
For further details see \cite[Section 3.1]{FR-Sobolev}.

\subsection{Fractional the powers of a positive Rockland operator and Sobolev spaces of potential type}

Now we will consider the fractional the powers $(\mathcal{R}_p)^\alpha$   
as defined in the Balakrishnan--Komatsu theory (see \cite[Theorem 4.3.6]{FR-book}). 
See also \cite{MCSA} for a detailed exposition of this theory. In particular, from this book we will recall a general result that allows us to express these powers in terms of the heat semigroup 
$T_t$. For the Euclidean case see for example \cite{ten}. We remark that in Euclidean spaces, the fractional powers of minus the Laplacian operator are known as the 
fractional Laplacian operator, which has received a lot of attention in recent years. 

\begin{theorem}\label{Teo_MCSA} \cite[Theorem 4.4]{Komatsu2}
Let $-A$ generate an equibounded $C_0$-semigroup $P_t$ in a Banach space $X$. Fixed $\alpha\in\C$,
let $m> \Re(\alpha)$ be a positive integer and 	$$ K_{\alpha,m} = \int_0^\infty t^{-\alpha-1} (1-e^{-t})^m \; dt \neq 0. $$
If there exists a sequence $\varepsilon_j \to 0$
such that the limit
$$ h=\lim_{j \to \infty} \frac{1}{K_{\alpha,m}} 
\int_{\varepsilon_j}^\infty t^{-\alpha-1} (I-P_t)^m \; f \; dt $$ 
exists in the weak sense, then $f \in \text{Dom}(A^\alpha)$ and $h=A^\alpha f$. 
Conversely,	if $f\in \text{Dom}  (A^\alpha)$ then there exists
	the strong limit
	$$ A^\alpha f = \lim_{\varepsilon \to 0} \frac{1}{K_{\alpha,m}} 
\int_\varepsilon^\infty t^{-\alpha-1} (I-P_t)^{m} f \; dt .$$
 	\label{thm-fractional-powers}
\end{theorem}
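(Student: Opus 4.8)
The plan is to read Theorem~\ref{Teo_MCSA} as the operator form of a single scalar identity, transported from the spectral variable to the generator $A$ through the semigroup $P_t$. The starting point is the elementary formula
\be
  \lambda^{\alpha}=\frac{1}{K_{\alpha,m}}\int_0^\infty t^{-\alpha-1}\bigl(1-e^{-t\lambda}\bigr)^m\,dt,\qquad \lambda\ge 0,\ 0<\Re(\alpha)<m,
\ee
which follows from the substitution $s=t\lambda$ in the definition of $K_{\alpha,m}$. An $m$-fold integration by parts, using that $(1-e^{-t})^m$ vanishes to order $m$ at $t=0$, evaluates $K_{\alpha,m}$ explicitly through Gamma functions and simultaneously records the convergence of the integral: the integrand behaves like $t^{m-\alpha-1}$ near $0$ (integrable since $m>\Re(\alpha)$) and like $t^{-\alpha-1}$ near $\infty$ (integrable since $\Re(\alpha)>0$), and the standing hypothesis $K_{\alpha,m}\neq0$ is what allows us to divide. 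Heuristically, replacing $\lambda$ by $A$ and $e^{-t\lambda}$ by $P_t$ turns the right-hand side into the integral appearing in the statement, and the whole proof amounts to making this substitution rigorous in the absence of a functional calculus.

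For the converse direction I would first establish the formula on a dense, well-behaved subspace, say $f\in\mathrm{Dom}(A^m)$. Writing $(I-P_t)^m=\sum_{k=0}^m\binom{m}{k}(-1)^k P_{kt}$ via the semigroup law $P_t^k=P_{kt}$, and using $\frac{d}{dt}P_t f=-AP_t f$ repeatedly, I would represent $(I-P_t)^m f$ as an $m$-fold time integral of $P_{s_1+\dots+s_m}A^m f$ over the simplex $0\le s_i\le t$. The bound $\|(I-P_t)^m f\|\lesssim t^m\,\|A^m f\|$ for small $t$, together with equiboundedness of $P_t$ for large $t$, gives absolute convergence of the resulting $(m+1)$-fold integral, so Fubini applies. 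Interchanging the $t$-integration with the inner time integrals and carrying out the $t$-integral explicitly collapses the expression, by the scalar identity above, to the Balakrishnan--Komatsu integral defining $A^\alpha f$; the constant $1/K_{\alpha,m}$ is exactly what makes the two sides agree. This shows that $\frac{1}{K_{\alpha,m}}\int_\varepsilon^\infty t^{-\alpha-1}(I-P_t)^m f\,dt\to A^\alpha f$ strongly as $\varepsilon\to0$ for $f\in\mathrm{Dom}(A^m)$.

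To reach all $f\in\mathrm{Dom}(A^\alpha)$ I would write $C_\varepsilon:=\frac{1}{K_{\alpha,m}}\int_\varepsilon^\infty t^{-\alpha-1}(I-P_t)^m\,dt$, which is a bounded operator for each $\varepsilon>0$, and establish a uniform estimate $\|C_\varepsilon f\|\lesssim \|f\|+\|A^\alpha f\|$ by re-expressing $C_\varepsilon$ through the resolvent $(\lambda+A)^{-1}=\int_0^\infty e^{-\lambda s}P_s\,ds$ and the Balakrishnan representation; combined with the previous step and the density of $\mathrm{Dom}(A^m)$ in the graph norm of $A^\alpha$, this upgrades the strong convergence $C_\varepsilon f\to A^\alpha f$ to all of $\mathrm{Dom}(A^\alpha)$, yielding the stated strong limit. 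For the direct implication I would use that $A^\alpha$ is a closed operator with weakly closed graph: if $C_{\varepsilon_j}f\rightharpoonup h$ for some $f\in X$, one writes $C_{\varepsilon_j}f=A^\alpha g_{\varepsilon_j}$ for suitable regularizations $g_{\varepsilon_j}\to f$ and passes to the weak limit, concluding $f\in\mathrm{Dom}(A^\alpha)$ and $A^\alpha f=h$. The main obstacle is precisely this transference in the general Banach-space setting: with only an equibounded $C_0$-semigroup at hand there is no spectral theorem, so every interchange of integrals and every passage to the limit $\varepsilon\to0$ must be controlled by hand through semigroup and resolvent estimates and the order-of-vanishing bound on $(I-P_t)^m$, and the weak-limit hypothesis in the direct direction must be married to the closedness of $A^\alpha$ rather than to norm convergence.
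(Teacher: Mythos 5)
A preliminary remark: the paper does not prove this statement at all — it is imported verbatim from Komatsu (\cite[Theorem 4.4]{Komatsu2}; see also the exposition in \cite{MCSA}) — so your attempt can only be measured against the classical Balakrishnan--Komatsu argument, whose skeleton (scalar identity, convergence on the core $\mathrm{Dom}(A^m)$, extension by density and closedness) you have correctly identified. However, two of your steps fail as stated. The first is the Fubini step: once you substitute $(I-P_t)^m f=\int_{[0,t]^m}P_{s_1+\cdots+s_m}A^mf\,ds_1\cdots ds_m$ and put norms inside, the best available bound is $C\,t^m\|A^mf\|$, and $\int_1^\infty t^{-\Re(\alpha)-1}\,t^m\,dt=\infty$ precisely because $m>\Re(\alpha)$; the cancellation that keeps $\|(I-P_t)^mf\|\le C\|f\|$ for large $t$ is destroyed by the simplex representation, so the $(m+1)$-fold integral is \emph{not} absolutely convergent and ``Fubini applies'' is false. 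One must instead truncate to $t\in(\varepsilon,T)$, use the identity $\int_{[0,T]^m}P_{s_1+\cdots+s_m}A^mf\,ds=(I-P_T)^mf$ to see that the boundary term $\alpha^{-1}T^{-\alpha}(I-P_T)^mf$ vanishes as $T\to\infty$, and even then the output is the improper integral $\alpha^{-1}\int_{[0,\infty)^m}(\max_i s_i)^{-\alpha}P_{s_1+\cdots+s_m}A^mf\,ds$, which does not ``collapse by the scalar identity'': without a functional calculus the scalar identity cannot be applied to $A$, and identifying this expression with the Balakrishnan integral requires a further Laplace-transform step (writing $s^{-\alpha}=\Gamma(\alpha)^{-1}\int_0^\infty\lambda^{\alpha-1}e^{-\lambda s}\,d\lambda$ and passing to the resolvent), which has its own conditional-convergence issues.

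The second gap is structural. Your passage from the core to all of $\mathrm{Dom}(A^\alpha)$ rests on the uniform estimate $\sup_{\varepsilon}\|C_\varepsilon f\|\lesssim\|f\|+\|A^\alpha f\|$, which you assert but never prove. This is not a technical detail: granting the core convergence and the fact that $\mathrm{Dom}(A^m)$ is a core for $A^\alpha$, that estimate is \emph{equivalent} to the strong-convergence half of the theorem (one direction by a three-epsilon argument, the other by the uniform boundedness principle on the Banach space $\mathrm{Dom}(A^\alpha)$ with the graph norm), so asserting it is essentially asserting the theorem. Finally, in the direct implication, writing $C_{\varepsilon_j}f=A^\alpha g_{\varepsilon_j}$ with $g_{\varepsilon_j}\to f$ amounts to inverting $A^\alpha$; for an equibounded semigroup $A$ need not be injective, and even when it is (as for the Rockland operators of this paper) $0$ lies in the spectrum, so $A^{-\alpha}$ is unbounded and your $g_{\varepsilon_j}$ cannot be constructed. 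The standard repair regularizes $f$ rather than inverting $A^\alpha$: apply $R=(I+A)^{-m}$, which is bounded and commutes with $C_{\varepsilon_j}$, so that $RC_{\varepsilon_j}f=C_{\varepsilon_j}Rf\to A^\alpha Rf$ strongly while $RC_{\varepsilon_j}f\to Rh$ weakly, whence $A^\alpha Rf=Rh$; one then removes $R$ by a descent lemma of the Komatsu theory. Your appeal to the weak closedness of the graph is sound in itself (a norm-closed subspace is weakly closed), but the pairs it needs are exactly the ones your construction fails to provide.
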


\begin{remark}
	When $0<\Re(\alpha)<1$,
	$ \, K_{\alpha,1}=-\Gamma(-\alpha) \neq 0$. 
\end{remark}

We will particularize the previous theorem to the case $A=\mathcal{R}_p$ and $\alpha\in\C$ with $\Re(\alpha)=s/\nu$. We first recall that if $1\leq p<\infty$ and $s>0$, the Sobolev potential space $L^p_s(\G)$  
 is characterized by 
 \begin{equation}\label{Th 4.3 FR inclusion}
  L^p_s(\G) = \hbox{Dom}\left((I + \mathcal{R}_p)^\alpha\right)=\hbox{Dom}\left((\mathcal{R}_p)^\alpha\right) \subset L^p(\G)   
 \end{equation}
endowed with the norm $\|\cdot\|_{L^p_s(\G)}$ given by \eqref{norm_sob_pot}. Moreover, $\|\cdot\|_{L^p_s(\G)}$ is equivalent to the following norms 
\begin{equation}\label{Theorem 4.3 FR-book}
  \|f\|_{L^p(\G)}+\|\left((I + \mathcal{R}_p)^{s/\nu}\right)f\|_{L^p(\G)} \quad \text{or} \quad \|f\|_{L^p(\G)}+\|( \mathcal{R}_p)^{s/\nu}f\|_{L^p(\G)}.
\end{equation}
We refer the reader to Definition 4.2, and Theorem 4.3 in \cite{FR-Sobolev}.
Now, from Theorem \ref{Teo_MCSA} we have
	\begin{equation}\label{app_teo_A_alpha}
   \mathcal{R}^\alpha_p f = \frac{-1}{\Gamma(-\alpha)}\,  \lim_{\varepsilon \to 0}  \int_{\varepsilon}^\infty t^{-\alpha-1} (f-T_t f) \; dt \qquad \forall f\in L_s^p(\G). 
\end{equation}

Some useful properties of the fractional powers of a positive Rockland operator follows easily from 
the representation in Theorem \ref{thm-fractional-powers}:

\begin{corollary}
Let $\mathcal{R}$ be a positive Rockland operator of homogeneous degree $\nu$ defined in a graded Lie group $\G$. Let $1\leq p<\infty$ and  $\Re(\alpha)>0$.
\begin{itemize}
\item[i)] Scaling property:  
\be \mathcal{R}_p^\alpha \left(f(D_r(x))\right) = r^{\alpha \nu} (\mathcal{R}_p^\alpha f)\left( D_r(x) \right) \quad \text{ for all } r>0, x\in\G. 
\label{fractional-powers-scalling}
\ee      
\item[ii)] Let $f \in L^1(\G)$ and $g \in L^p(\G)$. We have that 
\be \mathcal{R}_p^\alpha(f*g) = f* \mathcal{R}_p^\alpha(g)  .
\label{scalling-fractional-powers}
\ee
\end{itemize}
\end{corollary}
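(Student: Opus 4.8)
The plan is to derive both properties directly from the integral representation \eqref{app_teo_A_alpha}, treating first the case $0 < \Re(\alpha) < 1$ (where $m=1$ suffices and the formula reads $\mathcal{R}_p^\alpha f = \tfrac{-1}{\Gamma(-\alpha)}\lim_{\varepsilon\to 0}\int_\varepsilon^\infty t^{-\alpha-1}(f - T_t f)\,dt$), and then handling general $\Re(\alpha)>0$ by the same argument with the higher-order representation $\tfrac{1}{K_{\alpha,m}}\int_\varepsilon^\infty t^{-\alpha-1}(I-T_t)^m f\,dt$ from Theorem \ref{thm-fractional-powers}. In each case the strategy is to verify that the underlying operation (dilation or convolution) commutes with the heat semigroup $T_t$ in the appropriate way, pass that commutation through the integrand, and then extract the scalar factor by a change of variables.

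For part i), I would first record how $T_t$ behaves under dilation. Writing $f_r(x) := f(D_r(x))$, the scaling property of the heat kernel from Theorem \ref{ht_properties}(2), namely $h_{r^\nu t}(rx) = r^{-Q} h_t(x)$, together with the dilation-invariance $\int_\G g(D_r(x))\,dx = r^{-Q}\int_\G g(x)\,dx$ of the Haar measure, gives $T_t(f_r)(x) = (T_{t/r^\nu} f)(D_r(x))$; this is the key computation and it is a short change of variables in the convolution defining $T_t$. Substituting into the integral representation yields
\begin{equation*}
\mathcal{R}_p^\alpha (f_r)(x) = \frac{-1}{\Gamma(-\alpha)}\lim_{\varepsilon\to 0}\int_\varepsilon^\infty t^{-\alpha-1}\bigl(f(D_r x) - (T_{t/r^\nu}f)(D_r x)\bigr)\,dt,
\end{equation*}
and the change of variables $u = t/r^\nu$ (so $t^{-\alpha-1}\,dt = r^{-\alpha\nu} u^{-\alpha-1}\,du$) produces exactly the factor $r^{\alpha\nu}$, giving $\mathcal{R}_p^\alpha(f_r)(x) = r^{\alpha\nu}(\mathcal{R}_p^\alpha f)(D_r x)$, which is \eqref{fractional-powers-scalling}.

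For part ii), the crucial point is that convolution on the left by a fixed $f\in L^1(\G)$ commutes with $T_t$: since $T_t g = g * h_t$ and convolution on $\G$ is associative, $f * (T_t g) = f*(g*h_t) = (f*g)*h_t = T_t(f*g)$. Inserting this into \eqref{app_teo_A_alpha} applied to $f*g$ and using linearity of convolution by $f$ moves $f*$ outside the integrand, so that $\mathcal{R}_p^\alpha(f*g) = \tfrac{-1}{\Gamma(-\alpha)}\lim_{\varepsilon\to 0} f * \int_\varepsilon^\infty t^{-\alpha-1}(g - T_t g)\,dt = f * \mathcal{R}_p^\alpha g$, which is \eqref{scalling-fractional-powers}; the analogous identity $(I-T_t)^m(f*g) = f*\bigl((I-T_t)^m g\bigr)$ handles the general $m$ case.

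The main obstacle, and the only genuinely non-formal step, is justifying the interchange of the limit in $\varepsilon$ (and of the convolution operator $f*$) with the improper $t$-integral, since the integral is only conditionally convergent and the representation holds as a limit. I would handle this by working on the domain $L_s^p(\G)$ where Theorem \ref{thm-fractional-powers} guarantees the strong limit exists; since convolution by $f\in L^1(\G)$ is a bounded operator on $L^p(\G)$ by Young's inequality, it is continuous and hence commutes with the strong limit, and for the scaling case the dilation $g\mapsto g_r$ is likewise a bounded (up to the explicit $r^{-Q/p}$ factor) invertible operator on $L^p(\G)$, so it too passes through the limit. Thus both identities reduce to the algebraic commutation relations above once the continuity of these operators is invoked.
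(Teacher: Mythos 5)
Your overall strategy---derive both identities from the Balakrishnan--Komatsu representation by commuting the dilation (resp.\ left convolution by $f$) with the heat semigroup and then changing variables---is exactly the route the paper takes, and your part ii) is correct: it rests on the same one-line computation $T_t(f*g)=(f*g)*h_t=f*(g*h_t)=f*(T_t\,g)$ that constitutes the paper's entire proof of that part, and your extra observation that convolution by $f\in L^1(\G)$ is bounded on $L^p(\G)$ (Young), hence commutes with the Bochner integrals and the strong/weak limits in the representation, supplies a domain justification that the paper leaves implicit.

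Part i), however, contains a genuine error in the key commutation identity. Writing $f_r=f\circ D_r$, the heat-kernel scaling $h_{r^\nu t}(D_r(x))=r^{-Q}h_t(x)$ gives $h_t(D_{1/r}(u))=r^{Q}h_{r^\nu t}(u)$, and substituting $z=D_r(y)$ in $T_t(f_r)(x)=\int_\G f(D_r(y))\,h_t(y^{-1}\cdot x)\,dy$ yields
\[
T_t(f\circ D_r)=(T_{r^\nu t}f)\circ D_r ,
\]
not $(T_{t/r^\nu}f)\circ D_r$ as you claim. (As a sanity check against the generator: $\mathcal{R}(f\circ D_r)=r^\nu(\mathcal{R}f)\circ D_r$ by $\nu$-homogeneity, so formally $e^{-t\mathcal{R}}(f\circ D_r)=(e^{-r^\nu t\,\mathcal{R}}f)\circ D_r$; dilating space speeds up, not slows down, the diffusion.) Your write-up is in fact internally inconsistent at this point: with your identity, the substitution $u=t/r^\nu$ gives, exactly as you compute, $t^{-\alpha-1}\,dt=r^{-\alpha\nu}u^{-\alpha-1}\,du$, which would produce $\mathcal{R}_p^\alpha(f\circ D_r)=r^{-\alpha\nu}(\mathcal{R}_p^\alpha f)\circ D_r$---the wrong sign in the exponent---yet you then assert that the factor $r^{\alpha\nu}$ appears. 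With the corrected commutation identity, the substitution $u=r^\nu t$ gives $t^{-\alpha-1}\,dt=r^{\alpha\nu}u^{-\alpha-1}\,du$ and the desired scaling follows; the remainder of your argument, including passing the bounded invertible dilation operator through the strong limit to handle the domain issue, then goes through as written.
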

	
\begin{proof}
The scaling property follows from the corresponding property of the heat kernel (Theorem \ref{ht_properties} (2)). 
The second property follows from the associativity of convolution as
$$ T_t(f*g) = (f*g)*h_t = f*(g*h_t) = f*(T_t \; g) .$$
\end{proof}
	
In what follows we will investigate some relations between the different Sobolev spaces defined in the introduction. That is, we will recall and study some  
mapping properties of $\mathcal{R}_p^\alpha$ in relation with the spaces $\mathbb{W}_{p;k}(\G)$ (and with $W^{k,p}(\G)$ in case of Carnot groups).

Let $1<p<\infty$. In the Euclidean case,   $L^p_m(\R^n)=\mathbb{W}_{p;m}(\R^n)=W^{m,p}(\R^n)$ for every positive integer $m$ (see for e.g. \cite[Theorem 2.2]{Mizuta}). However, in the case of groups, the relations between the two Sobolev
spaces are more involved. It is known that when $s$ is a positive multiple of $\nu$ then $W^{s,p}(\G)= L^{p}_{s}(\G)$  (see \cite[Lemma 4.18]{FR-Sobolev}).  In particular, if  $0<s<\nu$, from \cite[Theorem 1.1]{FR-Sobolev} one can deduce that
$W^{\nu,p}(\G)\subset L^{p}_s(\G)$. Moreover,
from \eqref{Th 4.3 FR inclusion},
$W^{\nu,p}(\G)\subset \hbox{Dom}(\mathcal{R}_p^{s/\nu})$ (this holds for $1\leq p<\infty$), and then 
using Semigroup Theory (see for e.g. \cite{Pazy}), it can also be seen that when $0<s<\nu$,  $\mathcal{R}_p^{s/\nu}$ is a bounded operator from $W^{\nu,p}(\G)$ to $L^p(\G)$, that is 
\begin{equation}\label{desig_aux} 
\| \mathcal{R}_p^{s/\nu} f \|_{L^p(\G)} \lesssim  \| f \|_{W^{\nu,p}(\G)} \qquad \text{for every } f\in W^{\nu,p}(\G). 
\end{equation}

Apart from that, according to \cite[Theorem 4.10]{Folland}, in the stratified case, $W^{1,p}(\G)=L^p_1(\G)$. However, this might not hold on general graded groups (see Lemma 4.18 and the remark below in \cite{FR-Sobolev}). 

Using the following Pseudo Poincaré Inequality, we will see that for $0<s<1$, $\mathbb{W}_{p;1}(\G)\subseteq L^p_s(\G)$ and, for Carnot groups, $W^{1,p}(\G)\subseteq L^p_s(\G)$.

\begin{proposition}\label{Pseudo_Poincare}(Pseudo Poincaré Inequality)
Let $\G$ be a graded Lie group, $1<p<\infty$, and $f\in \mathbb{W}_{p;1}(\G)$, then
$$\|f-T_tf\|_{L^p(\G)}\lesssim \sum_{j=1}^n t^{\sigma_j/\nu} \, \|X_jf\|_{L^p(\G)} \qquad \forall t>0.$$
In particular, if $t<1$,
$\|f-T_tf\|_{L^p(\G)}\lesssim  t^{1/\nu} \| f \|_{\mathbb{W}_{p;1}(\G)} $.
 Also, if $\G$ is Carnot and $f\in W_{1,p}(\G)$
$$\|f-T_tf\|_{L^p(\G)}\lesssim t^{1/\nu}\sum_{j=1}^{n_1}  \, \|X_jf\|_{L^p(\G)} \qquad \forall t>0$$
where the sum is over the first stratum.
\end{proposition}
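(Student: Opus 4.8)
The plan is to represent $f - T_t f$ as an average of translates of $f$ against the heat kernel, and then to combine the $L^p$ mean value inequality of Theorem~\ref{Lp_mean_value} with the scaling of the heat kernel.

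First, since $T_t f = f * h_t$ and $\int_\G h_t(z)\,dz = 1$ by Theorem~\ref{ht_properties}(4), after the change of variables $z = y^{-1}\cdot x$ and using invariance of the Haar measure one obtains
\begin{equation*}
f(x) - T_t f(x) = \int_\G \bigl[ f(x) - f(x\cdot z^{-1}) \bigr]\, h_t(z)\,dz .
\end{equation*}
Applying Minkowski's integral inequality in $L^p(\G)$---and using that $h_t$ is Schwartz, so that no positivity of the kernel is needed---gives
\begin{equation*}
\|f - T_t f\|_{L^p(\G)} \leq \int_\G \left( \int_\G |f(x) - f(x\cdot z^{-1})|^p\,dx \right)^{1/p} |h_t(z)|\,dz .
\end{equation*}

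Next I would estimate the inner $L^p$ norm by the mean value inequality. By Theorem~\ref{Lp_mean_value} applied with $y = z^{-1}$, together with $|z^{-1}| = |z|$, one has
\begin{equation*}
\left( \int_\G |f(x\cdot z^{-1}) - f(x)|^p\,dx \right)^{1/p} \lesssim \sum_{j=1}^n |z|^{\sigma_j}\, \|X_j f\|_{L^p(\G)} .
\end{equation*}
Inserting this bound and interchanging the finite sum with the integral reduces everything to estimating the moments $\int_\G |z|^{\sigma_j} |h_t(z)|\,dz$.

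The key step is this moment estimate, obtained from the scaling property $h_t(z) = t^{-Q/\nu} h_1(D_{t^{-1/\nu}} z)$ of Theorem~\ref{ht_properties}(2). Performing the dilation change of variables $w = D_{t^{-1/\nu}} z$, for which $|z| = t^{1/\nu}|w|$ and $dz = t^{Q/\nu}\,dw$, the two powers of $t$ coming from the kernel and from the Jacobian cancel and yield
\begin{equation*}
\int_\G |z|^{\sigma_j} |h_t(z)|\,dz = t^{\sigma_j/\nu} \int_\G |w|^{\sigma_j} |h_1(w)|\,dw ,
\end{equation*}
where the remaining integral is a finite constant since $h_1$ is Schwartz. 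Collecting the terms gives the asserted inequality. The two particular cases follow at once: for $t < 1$ one has $t^{\sigma_j/\nu} \leq t^{1/\nu}$ because every $\sigma_j \geq 1$, and the Carnot statement is obtained verbatim upon using the stratified version of the mean value inequality, in which only the first-stratum fields---all of weight one---appear on the right-hand side. I expect the moment estimate via the dilation change of variables to be the only point requiring care; the rest is a routine application of Minkowski's inequality and the cited mean value bound. The conceptual observation worth stressing is that positivity of $h_t$ is never used, only its Schwartz decay, so the argument covers general, possibly sign-changing, heat kernels of Rockland operators.
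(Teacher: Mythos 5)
Your proof is correct, and its skeleton coincides with the paper's: represent $f-T_tf$ by integrating translates of $f$ against $h_t$ (using $\int_\G h_t=1$), apply Minkowski's integral inequality, invoke Theorem~\ref{Lp_mean_value}, and reduce everything to the moments $\int_\G |z|^{\sigma_j}|h_t(z)|\,dz$. The one place where you genuinely diverge is in estimating these moments. The paper bounds $|h_t|$ pointwise by the Gaussian-type estimate of Theorem~\ref{theorem-DHZ}, passes to polar coordinates, and evaluates the resulting radial integral explicitly in terms of Gamma functions; you instead use the exact scaling identity $h_t(z)=t^{-Q/\nu}h_1(D_{t^{-1/\nu}}z)$ from Theorem~\ref{ht_properties}(2) and a dilation change of variables, so the factor $t^{\sigma_j/\nu}$ appears as an identity rather than an estimate, and the only remaining input is that $\int_\G |w|^{\sigma_j}|h_1(w)|\,dw<\infty$ because $h_1$ is Schwartz. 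Your route is cleaner and uses strictly weaker information about the heat kernel (Schwartz decay of $h_1$ alone, rather than the sharp bounds of Theorem~\ref{theorem-DHZ}), while the paper's computation has the minor virtue of producing explicit constants $\Gamma((Q+\sigma_j)(1-1/\nu))$. Your remark that positivity of $h_t$ is never needed is also accurate and pertinent here, since the paper stresses that heat kernels of general Rockland operators may change sign; the paper's proof likewise works only with $|h_t|$. Finally, for the Carnot case both you and the paper defer to the first-stratum refinement of the mean value inequality (the paper cites it in the discussion preceding Theorem~\ref{Lp_mean_value} rather than reproving it), so your level of detail there matches the paper's.
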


\begin{proof}
Using that  the heat kernel integrates one over $\G$ and also using the Mean Value Inequality on $L^p(\G)$ (Theorem \ref{Lp_mean_value}) we have
$$ \|f-T_tf\|_{L^p(\G)}\leq\sum_{j=1}^n\|X_jf\|_{L^p(\G)}\int_{\G} |y|^{\sigma_j}|h_t(y)| \, dy.
$$
Using the estimate for the heat kernel and changing to polar coordinates we obtain,
\begin{align*}
    \|f-T_tf\|_{L^p(\G)}&\lesssim t^{-Q/\nu} \sum_{j=1}^n\|X_jf\|_{L^p(\G)}  \int_{\G} |y|^{\sigma_j} \exp\left(-c\left(\frac{|y|^\nu}{t}\right)^{\frac{1}{\nu-1}}\right) \, dy\\
    &\approx t^{-Q/\nu} \sum_{j=1}^n\|X_jf\|_{L^p(\G)}  \int_0^\infty r^{Q-1+\sigma_j} \exp\left(-c\left(\frac{r^\nu}{t}\right)^{\frac{1}{\nu-1}}\right) \, dr.
\end{align*}
By changing coordinates,
\begin{align*}
    \|f-T_tf\|_{L^p(\G)}&\lesssim  \sum_{j=1}^n t^{\sigma_j/\nu} \, \|X_jf\|_{L^p(\G)}  \int_0^\infty s^{Q-1+\sigma_j} \exp\left(-cs^{\frac{\nu}{\nu-1}}\right) \, ds\\
    &\approx \sum_{j=1}^n t^{\sigma_j/\nu} \, \|X_jf\|_{L^p(\G)}  \int_0^\infty \tau^{(Q+\sigma_j)(1-\frac{1}{\nu})} e^{-\tau} \, \frac{d\tau}{\tau}\\
    &= \sum_{j=1}^n t^{\sigma_j/\nu} \, \|X_jf\|_{L^p(\G)}  \, \Gamma((Q+\sigma_j)(1-{1}/{\nu})).
\end{align*}
If in particular $t<1$, since $\sigma_j\geq 1$ for all $j$ we have
$$\|f-T_tf\|_{L^p(\G)}\lesssim \sum_{j=1}^n t^{1/\nu} \, \|X_jf\|_{L^p(\G)} = t^{1/\nu} \| f \|_{\mathbb{W}_{p;1}(\G)}.$$
The last part follows analogously using Remark \ref{remark: saka}.
\end{proof}

\begin{proposition}\label{Prop_cont_rockland}
Let $\G$ be a graded Lie group, $1< p<\infty$.  If $0<s<1$, then  $\mathbb{W}_{p;1}(\G) \subset L^p_s(\G)$ and
$\mathcal{R}_p^{s/\nu}$ is a bounded operator from $\mathbb{W}_{p;1}(\G)$ to $L^p(\G)$.
(For Carnot groups also  $\mathbb{W}_{p;1}(\G) \subset L^p_s(\G)$ and
$\mathcal{R}_p^{s/\nu}$ is a bounded operator from $W^{1,p}(\G)$ to $L^p(\G)$.)
\end{proposition}

\begin{proof} Consider $\alpha=s/\nu$. We will apply Theorem \ref{Teo_MCSA}, that is, we will see that the integral \eqref{app_teo_A_alpha} defining $\mathcal{R}^\alpha_p$ exists as a Bochner integral for every function in $\mathbb{W}_{p;1}(\G)$. In other words, we will prove that if $f\in \mathbb{W}_{p;1}(\G)$,  the following integral is finite
\begin{align*}
 \int_{0}^\infty t^{-\alpha-1} \| f-T_t f\|_{L^p(\G)}  \; dt.
\end{align*}
We split the integral in $t<1$ and $t\geq 1$.
On the one hand, by the Pseudo Poincaré Inequality (Proposition \ref{Pseudo_Poincare})
$$\|f-T_tf\|_{L^p(\G)}\lesssim t^{1/\nu} \| f \|_{\mathbb{W}_{p;1}(\G)} \qquad \forall \,  0<t<1$$
thus,
$$ \int_{0}^1 t^{-\alpha-1} \| f-T_t f\|_{L^p(\G)}  \; dt
\leq \int_{0}^1 t^{-\alpha-1}  t^{1/\nu} \| f \|_{\mathbb{W}_{p;1}(\G)} \, dt
\lesssim \| f \|_{\mathbb{W}_{p;1}(\G) }
$$
provided that $1/\nu-\alpha>0$.  
On the other hand, since $\alpha>0$,
$$ \int_{1}^\infty t^{-\alpha-1} \| f-T_t f\|_{L^p(\G)}  \; dt
\leq \int_{1}^\infty t^{-\alpha-1} \, 2 \|f \|_{L^p(\G)} \, dt 
\lesssim \| f \|_{L^p(\G)}. $$
\end{proof}

\subsubsection{Characterization of fractional powers of a Rockland operator in terms of second order differences}

In this section we give a formula for the fractional powers of a Rockland operator which generalizes a well-known formula for the fractional Laplacian in Euclidean spaces. As an application, we derive other relations between the different kinds of Sobolev spaces defined in the introduction.

In order to do so, we define a kernel $k_\alpha$ in terms of the heat kernel $h_t$, 
\begin{equation}
   k_\alpha(y)= \frac{1}{\Gamma(-\alpha)} \int_0^\infty  h_t(y) \;
\frac{dt}{t^{1+\Re(\alpha)}}. \label{fractional-kernel} 
\end{equation}
The following lemma summarizes the key properties of this kernel.
\begin{lemma}\label{lemma-k-alpha}
Let $\alpha\in\C$ such that $\Re(\alpha)>-Q/\nu$. Then, kernel $k_\alpha$ is well-defined and satisfies the estimate
\be |k_\alpha(y)| \lesssim  |y|^{-(Q+\Re(\alpha) \nu)} \label{k-alpha-bound} \ee
and the scaling property 
\be k_\alpha(D_s(x)) = s^{-Q-\alpha \nu} k_\alpha(x). \label{k-alpha-scalling}  \ee
\end{lemma}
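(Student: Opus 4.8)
The plan is to deduce all three claims from the pointwise Gaussian estimate for the heat kernel, handling the size bound and well-definedness together and then obtaining the homogeneity by a scaling substitution. Specializing Theorem \ref{theorem-DHZ} to $s=0$ and $\beta=0$ gives, for all $t>0$ and $y\in\G$,
\[
|h_t(y)| \lesssim t^{-Q/\nu}\exp\Bigl(-c\,|y|^{\nu/(\nu-1)}/t^{1/(\nu-1)}\Bigr),
\]
and this single inequality drives the analysis of the defining integral \eqref{fractional-kernel}.

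First I would establish well-definedness and \eqref{k-alpha-bound} simultaneously. Inserting the estimate above into \eqref{fractional-kernel} bounds $|k_\alpha(y)|$ by a constant times $\int_0^\infty t^{-Q/\nu-1-\Re(\alpha)}\exp\bigl(-c|y|^{\nu/(\nu-1)}/t^{1/(\nu-1)}\bigr)\,dt$. For fixed $y\neq 0$ the exponential factor decays faster than any power of $t$ as $t\to 0^+$, so integrability near the origin is automatic; as $t\to\infty$ the exponential tends to $1$, and integrability there forces $Q/\nu+1+\Re(\alpha)>1$, which is exactly the hypothesis $\Re(\alpha)>-Q/\nu$. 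To extract the precise power of $|y|$ I would use the change of variables $u=|y|^{\nu/(\nu-1)}t^{-1/(\nu-1)}$, i.e. $t=u^{-(\nu-1)}|y|^{\nu}$; this turns the integral into $|y|^{-(Q+\Re(\alpha)\nu)}$ times a constant multiple of $\int_0^\infty u^{(\nu-1)(Q/\nu+\Re(\alpha))-1}e^{-cu}\,du$, and the latter is the finite value $c^{-(\nu-1)(Q/\nu+\Re(\alpha))}\Gamma\bigl((\nu-1)(Q/\nu+\Re(\alpha))\bigr)$, convergent precisely because $Q/\nu+\Re(\alpha)>0$. This yields \eqref{k-alpha-bound}.

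For the homogeneity \eqref{k-alpha-scalling}, I would combine the dilation identity $h_{r^\nu t}(rx)=r^{-Q}h_t(x)$ from Theorem \ref{ht_properties}(2), rewritten as $h_t(D_s(x))=s^{-Q}h_{t/s^\nu}(x)$, with the substitution $t\mapsto s^\nu t$ in \eqref{fractional-kernel}; tracking the powers of $s$ produced by the factor $s^{-Q}$, by $dt=s^\nu\,dt$, and by $t^{-1-\Re(\alpha)}\mapsto (s^\nu)^{-1-\Re(\alpha)}t^{-1-\Re(\alpha)}$ gives the exponent $-Q-\Re(\alpha)\nu$, consistent with \eqref{k-alpha-scalling} since $k_\alpha$ depends on $\alpha$ only through its real part. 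I do not expect a genuine obstacle here: the proof is essentially a careful reduction of the time integral to a Gamma function, and the only delicate point is confirming that the convergence threshold at $t\to\infty$ coincides exactly with $\Re(\alpha)>-Q/\nu$ and that the two changes of variables reproduce the stated exponents.
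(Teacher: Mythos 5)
Your proposal is correct and follows essentially the same route as the paper: both insert the Gaussian bound of Theorem \ref{theorem-DHZ} into \eqref{fractional-kernel}, reduce the time integral to a Gamma function via the substitution $u = c\,|y|^{\nu/(\nu-1)}t^{-1/(\nu-1)}$ (convergent precisely when $Q+\Re(\alpha)\nu>0$), and obtain \eqref{k-alpha-scalling} from the heat-kernel homogeneity in Theorem \ref{ht_properties}(2) together with the change of variables $t\mapsto s^{\nu}t$, a step the paper merely asserts and you spell out. Your remark that the computation yields the exponent $-Q-\Re(\alpha)\nu$ rather than $-Q-\alpha\nu$ is a fair observation about an inconsistency in the paper's own formulation (the definition \eqref{fractional-kernel} uses $t^{1+\Re(\alpha)}$ where $t^{1+\alpha}$ is evidently intended).
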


\begin{proof}
Using Theorem \ref{theorem-DHZ}, we have that
\begin{align*}
|k_\alpha(y)|  \leq C \; \int_0^\infty t^{-Q/\nu} \; 
\exp\left(-c |y|^{\frac{\nu}{\nu-1}}/ t^{1/(\nu-1)}   \right) \; \frac{dt}{t^{1+\Re(\alpha)}}.
\end{align*}
By making the change of variables
$ u = c |y|^{\frac{\nu}{\nu-1}}/ t^{1/(\nu-1)}$, 
we get
\begin{align*}
	|k_\alpha(y)| \lesssim   \; |y|^{-(Q+\Re(\alpha) \nu)} \int_0^\infty u^{(Q/\nu+\Re(\alpha))(\nu-1)} \; 
	e^{-u} \; \frac{du}{u} \lesssim |y|^{-(Q+\Re(\alpha) \nu)}
\end{align*}
provided that $Q+\Re(\alpha)\nu>0$. 
Hence,  \eqref{k-alpha-bound} follows. Finally, 
\eqref{k-alpha-scalling} follows easily from the scaling property of the heat kernel by making a change 
of variables in the definition of $k_\alpha$.

\end{proof}

\begin{remark}
In the Euclidean setting, considering $\alpha>0$, we have
$$ k_\alpha(x)= \frac{c(n,\alpha)}{|x|^{n+2\alpha}}. $$
\end{remark}

The following result extends the formula in \cite[Lemma 3.2]{DPV} to our setting.

\begin{theorem}\label{Teo_R_puntual}
	Let $\G$ be a graded Lie group,  and   $\alpha\in\C$ such that $0<\Re(\alpha)<\frac{2}{\nu}$.  
	For every  $f \in \mathcal{S}(\G)$ we have the integral representation
	\begin{equation*}\label{R_puntual}
    \mathcal{R}_p^{\alpha} f (x)= \int_{\G} [f(x\cdot y) + f(x\cdot y^{-1})-2f(x) ] \, k_\alpha(y) \; dy .	    
	\end{equation*}
\end{theorem}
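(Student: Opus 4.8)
The plan is to start from the representation \eqref{app_teo_A_alpha} of the fractional power,
$$ \mathcal{R}_p^{\alpha} f(x) = \frac{-1}{\Gamma(-\alpha)} \lim_{\varepsilon\to 0}\int_\varepsilon^\infty t^{-\alpha-1}\bigl(f(x) - T_t f(x)\bigr)\, dt, $$
and to rewrite $f(x)-T_tf(x)$ using the heat kernel. Since $\int_\G h_t = 1$ (Theorem \ref{ht_properties} (4)) and the kernel is symmetric $h_t(y)=h_t(y^{-1})$ (Theorem \ref{ht_properties} (3)), I would write
$$ f(x) - T_t f(x) = \int_\G \bigl(f(x) - f(x\cdot y)\bigr) h_t(y)\, dy, $$
and then symmetrize by averaging the $y$ and $y^{-1}$ contributions, using $h_t(y)=h_t(y^{-1})$ and the invariance of Haar measure under inversion, to obtain
$$ f(x) - T_t f(x) = -\tfrac12 \int_\G \Delta^2_y f(x)\, h_t(y)\, dy, $$
where $\Delta^2_y f(x) = f(x\cdot y)+f(x\cdot y^{-1})-2f(x)$ is the second-order difference \eqref{Delta^2_y}. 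The symmetrization is exactly what produces the \emph{second}-order difference and is the structural heart of the formula.

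Next I would substitute this into the integral, interchange the order of the $t$- and $y$-integrations by Fubini, and recognize the inner $t$-integral as the definition \eqref{fractional-kernel} of $k_\alpha$:
$$ \mathcal{R}_p^{\alpha} f(x) = \frac{1}{2\Gamma(-\alpha)}\int_\G \Delta^2_y f(x)\left(\int_0^\infty t^{-\alpha-1} h_t(y)\, dt\right) dy = \frac{1}{2}\int_\G \Delta^2_y f(x)\, k_\alpha(y)\, dy. $$
The factor $\tfrac12$ together with the $\tfrac12$ from symmetrization would be absorbed into the statement (or the kernel normalization); I would reconcile the constant carefully against \eqref{fractional-kernel}, since the final statement has no explicit $\tfrac12$, so either the symmetrization introduces a compensating factor of $2$ or the intended kernel carries it. The main analytic obstacle is justifying the Fubini interchange and the passage to the limit $\varepsilon\to 0$: the integrand is only conditionally integrable near $t=0$ before symmetrization, and absolute integrability is recovered precisely because the second-order difference gains a quadratic cancellation in $y$.

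To make this rigorous I would split the $y$-integral near the origin $|y|\le 1$ and away from it. For $|y|\le 1$, I apply the second-order mean value inequality (Theorem \ref{MeanValue_primero}) to bound $|\Delta^2_y f(x)| \lesssim |y|^2 \sup_{|z|\le \eta^2|y|}\{|\mathrm{X}^\alpha f(x\cdot z)| : |\alpha|=2\}$, which for $f\in\mathcal S(\G)$ (or $C^2$ near $x$) is $\lesssim |y|^2$; combined with the kernel bound $|k_\alpha(y)|\lesssim |y|^{-(Q+\Re(\alpha)\nu)}$ from \eqref{k-alpha-bound}, the integrand is dominated by $|y|^{2-Q-\Re(\alpha)\nu}$, which is integrable over $|y|\le 1$ precisely when $\Re(\alpha)<2/\nu$ — this is exactly the hypothesis and explains the range. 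For $|y|> 1$, boundedness of $f$ gives $|\Delta^2_y f(x)|\lesssim 1$ while $|k_\alpha(y)|\lesssim |y|^{-(Q+\Re(\alpha)\nu)}$ is integrable at infinity since $\Re(\alpha)>0$. This dominated integrability legitimizes Fubini and the limit, completing the argument. I expect the subtlest point to be the careful bookkeeping of the symmetrization constant and verifying that for fixed $x$ the $C^2$-near-$x$ plus $L^\infty$ hypothesis suffices to reproduce the $\mathcal S(\G)$ computation (the mean value bound only needs the local $C^2$ control, and the large-$|y|$ tail only needs $L^\infty$).
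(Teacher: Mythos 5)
Your proposal is correct and follows essentially the same route as the paper's proof: the Balakrishnan representation \eqref{app_teo_a_alpha} with $m=1$, symmetrization via $h_t(y)=h_t(y^{-1})$ and inversion-invariance of Haar measure to produce the second-order difference, and Fubini justified by splitting at $|y|=1$, using Theorem \ref{MeanValue_primero} together with the kernel bound \eqref{k-alpha-bound} near the origin (which is exactly where $\Re(\alpha)<2/\nu$ enters) and the $L^\infty$ bound plus decay of $k_\alpha$ at infinity (where $\Re(\alpha)>0$ enters). The factor $\tfrac12$ you flag is a genuine discrepancy, and your bookkeeping is the correct one: the paper's own symmetrization step states $T_tf(x)-f(x)=\int_{\G}\Delta^2_yf(x)\,h_t(y)\,dy$ when averaging the $y$ and $y^{-1}$ representations actually yields $\tfrac12\int_{\G}\Delta^2_yf(x)\,h_t(y)\,dy$, so the stated formula should carry a $\tfrac12$ (or the kernel a compensating normalization), consistent with the Euclidean formula in \cite{DPV}.
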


\begin{proof}
	From the Theorem \ref{thm-fractional-powers} with $m=1$, we have that 
	$$ \mathcal{R}^\alpha f = \frac{1}{\Gamma(-\alpha)} \lim_{\varepsilon  \to 0} 
	\int_\varepsilon^\infty t^{-\alpha-1} (T_t f-f)  \; dt. $$
	Since $\G$ is unimodular, we have that for $f \in L^p(\G)$,
	$$ T_t f(x) = \int_{\G} f(y) h_t(y^{-1}\cdot x) \; dy = \int_{\G} f(x\cdot y^{-1}) h_t(y) \; dy. $$
	The heat kernel satisfies
	$ h_t(y) = h_t(y^{-1}) $
	by Theorem \ref{ht_properties}, and since $\mathcal{R}$ has real coefficients, $h_t$ is real-valued.
	Hence, 
	$$ T_t f(x) = \int_{\G} f(x\cdot y ) h_t(y) \; dy .$$
	Also, from 
	$ \int_{\G} h_t(y) \; dy = 1 $
	 we deduce that 
	$$ T_t f(x) - f(x) = \int_{\G} [f( x\cdot y)+ f( x\cdot y^{-1})-2f(x)] \, h_t(y) \; dy .$$
	On the other hand, we consider the function defined by \eqref{fractional-kernel}.
	Replacing, and using Fubini theorem, we get the desired formula. Indeed, Fubini theorem holds since
	\begin{equation*}
	\int_\G\int_{0}^\infty  |\Delta_y^2f(x)h_t(y)| \, \frac{dt}{t^{1+\alpha}} \, dy  = I_1 + I_2  
	\end{equation*}
	where 
	\begin{equation*}
	    I_1=\int_{|y|>1} |\Delta_y^2f(x)|\int_{0}^\infty   |h_t(y)|\, \frac{dt}{t^{1+\alpha}} \, dy
	\end{equation*}
	and 
	\begin{equation*}
	    I_2=\int_{|y|\leq 1} |\Delta_y^2f(x)|\int_{0}^\infty  |h_t(y)| \, \frac{dt}{t^{1+\alpha}} \, dy,
	\end{equation*}
are both finite. Indeed, by repeating the argument for the boundedness \eqref{k-alpha-bound}	and using polar coordinates on $\G$ \cite[Proposition 3.1.42]{FR-book} we have
	\begin{equation*}
	    I_1\leq 3\|f\|_{L^\infty(\G)} \int_{|y|>1} |y|^{-(Q+\Re(\alpha)\nu)} \, dy\lesssim \int_{r>1} r^{-\Re(\alpha)\nu-1} \, dr<\infty
	\end{equation*}
since $\Re(\alpha)>0$. On the other hand, using inequality 
\eqref{desig_2do_orden} for $f\in \mathcal{S}(\G)$, 
\begin{equation*}
    I_2\lesssim \sum_{j,k=1}^n\left\|X_jX_kf\right\|_{L^\infty(\G)} \int_{|y|\leq 1}|y|^2 \int_{0}^\infty  |h_t(y)| \, \frac{dt}{t^{1+\alpha}} \, dy  .  
\end{equation*}
Arguing as in Lemma \ref{lemma-k-alpha}, we obtain that
\begin{align*}
    I_2&\lesssim \int_{|y|\leq 1}  |y|^{2-(Q+\Re(\alpha) \nu)} \; dy 
    \lesssim \int_{r\leq 1}  r^{2-(Q+\Re(\alpha) \nu)+Q-1} \; dr = \int_{r\leq 1}  r^{1-\Re(\alpha) \nu} \; dr<\infty
\end{align*}
provided that $2-\Re(\alpha)\nu>0$.
\end{proof}

As a corollary, the  characterization given in  Theorem \ref{Teo_R_puntual} for the fractional powers of the Rockland operator holds for a wider class of functions by considering Bochner integrals.

\begin{corollary}\label{Coro_rock_characterization}
If $f\in \mathbb{W}_{p;\nu}(\G)$, $1\leq p<\infty$, and $0<\Re(\alpha)<\frac{2}{\nu}$, then 
$$ \mathcal{R}_p^{\alpha} f = \int_{\G}  k_\alpha(y) \, \Delta_y^{2}f  \; dy $$
in the sense of Bochner integral. 
\end{corollary}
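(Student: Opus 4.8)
The plan is to upgrade the pointwise identity of Theorem \ref{Teo_R_puntual} to an $L^p$-valued (Bochner) statement by a density argument, using the $L^p$ second-order mean value inequality (Theorem \ref{teo_second order_for_Sobolev}) to control the integrand. First I would recall from \eqref{app_teo_A_alpha} that for $f \in W^{\nu,p}(\G) \subset \mathrm{Dom}(\mathcal{R}_p^\alpha)$ the fractional power is represented as the strong $L^p$-limit
$$ \mathcal{R}_p^{\alpha} f = \frac{1}{\Gamma(-\alpha)}\, \lim_{\varepsilon \to 0} \int_{\varepsilon}^{\infty} t^{-\alpha-1}\,(T_t f - f)\; dt, $$
which is justified because $W^{\nu,p}(\G) \subset \mathrm{Dom}(\mathcal{R}_p)$ and hence $W^{\nu,p}(\G) \subset \mathrm{Dom}(\mathcal{R}_p^{\alpha})$ for $0 < \Re(\alpha) < 2/\nu < 1$. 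As in the proof of Theorem \ref{Teo_R_puntual}, I would rewrite $T_t f - f = \int_{\G} \tfrac12\,\Delta_y^2 f\, h_t(y)\, dy$ using $h_t(y)=h_t(y^{-1})$ and $\int_\G h_t = 1$; the goal is then to interchange the $dt$ and $dy$ integrations and recognize $k_\alpha(y) = \frac{1}{\Gamma(-\alpha)}\int_0^\infty h_t(y)\, t^{-1-\alpha}\, dt$, so that
$$ \mathcal{R}_p^{\alpha} f = \int_{\G} k_\alpha(y)\, \Delta_y^{2} f \; dy $$
as a Bochner integral in $L^p(\G)$.

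The main work is to verify that this last integral converges absolutely in the Bochner sense, i.e.\ that $\int_{\G} |k_\alpha(y)|\, \|\Delta_y^2 f\|_{L^p(\G)}\, dy < \infty$, which simultaneously justifies the Fubini-type interchange. I would split the integral over $|y|\le 1$ and $|y|>1$ and use the bound \eqref{k-alpha-bound}, namely $|k_\alpha(y)| \lesssim |y|^{-(Q+\Re(\alpha)\nu)}$, together with polar coordinates. For the far region $|y|>1$ the crude estimate $\|\Delta_y^2 f\|_{L^p(\G)} \le 4\|f\|_{L^p(\G)}$ gives an integrand $\lesssim |y|^{-(Q+\Re(\alpha)\nu)}$, whose radial integral is $\int_{r>1} r^{-\Re(\alpha)\nu-1}\, dr < \infty$ since $\Re(\alpha)>0$. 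For the near region $|y|\le 1$ I would invoke the $L^p$ second-order mean value inequality (Theorem \ref{teo_second order_for_Sobolev}), which for $|y|\le 1$ yields $\|\Delta_y^2 f\|_{L^p(\G)} \lesssim |y|^2 \sum_{j,k}\|X_jX_k f\|_{L^p(\G)} \lesssim |y|^2\, \|f\|_{W^{\nu,p}(\G)}$ (using that all involved second derivatives are controlled by the $W^{\nu,p}$-norm). Hence the near integrand is $\lesssim |y|^{2-(Q+\Re(\alpha)\nu)}$, with radial integral $\int_{r\le 1} r^{1-\Re(\alpha)\nu}\, dr < \infty$ precisely because $2-\Re(\alpha)\nu > 0$, i.e.\ $\Re(\alpha) < 2/\nu$. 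This is exactly where the hypothesis $\Re(\alpha)<2/\nu$ is used, mirroring the scalar computation in Theorem \ref{Teo_R_puntual}.

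Having established absolute convergence, I would justify the exchange of limit and integration by a dominated-convergence argument for Bochner integrals: the truncated integrals $\int_{\varepsilon}^{\infty} t^{-\alpha-1}(T_t f - f)\, dt$ correspond, after Fubini on $\G$, to $\int_\G k_\alpha^{(\varepsilon)}(y)\,\Delta_y^2 f\, dy$ with $k_\alpha^{(\varepsilon)}(y)=\frac{1}{\Gamma(-\alpha)}\int_\varepsilon^\infty h_t(y)t^{-1-\alpha}\,dt \to k_\alpha(y)$ pointwise and dominated by $|k_\alpha(y)|$, so the dominating function $|k_\alpha(y)|\,\|\Delta_y^2 f\|_{L^p(\G)}$ is integrable by the previous step. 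The main obstacle I anticipate is purely technical rather than conceptual: carefully phrasing the Fubini/Tonelli interchange at the level of \emph{Bochner}-valued integrals (one must integrate the $L^p$-norm of the $\G$-valued integrand rather than a scalar, and confirm strong measurability of $y \mapsto \Delta_y^2 f \in L^p(\G)$, which follows from continuity of translation on $L^p$). Once the integrability bound above is in place, the identity follows by matching it against the already-proven strong-limit representation \eqref{app_teo_A_alpha}, and the conclusion holds for all $f\in W^{\nu,p}(\G)$ and $1\le p<\infty$.
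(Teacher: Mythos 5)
Your proposal is correct, but it takes a genuinely different route from the paper's. The paper proves the statement as an honest corollary of Theorem \ref{Teo_R_puntual}: the identity is first known pointwise for $f\in\mathcal{S}(\G)$, and it is then extended to $f\in W^{\nu,p}(\G)$ by density, choosing Schwartz approximations $f_m\to f$ in $W^{\nu,p}(\G)$, using the mapping property \eqref{desig_aux} to get $\mathcal{R}_p^{\alpha}f_m\to\mathcal{R}_p^{\alpha}f$ in $L^p(\G)$, and passing to the limit in the Bochner integrals by dominated convergence. You avoid density altogether: starting from the Balakrishnan--Komatsu representation \eqref{app_teo_A_alpha} --- legitimate since $W^{\nu,p}(\G)\subset\mathrm{Dom}(\mathcal{R}_p)\subset\mathrm{Dom}(\mathcal{R}_p^{\alpha})$, as the paper records --- you rewrite $T_tf-f$ as a second-difference average against $h_t$, interchange the $dt$ and $dy$ integrations for each truncation $\varepsilon>0$ by vector-valued Fubini, and let $\varepsilon\to 0$ by dominated convergence against the truncated kernels $k_\alpha^{(\varepsilon)}$. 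The analytic core is identical in both arguments (the kernel bound \eqref{k-alpha-bound} of Lemma \ref{lemma-k-alpha} and the second-order $L^p$ mean value inequality of Theorem \ref{teo_second order_for_Sobolev}, split over $|y|\leq 1$ and $|y|>1$), so the trade-off is structural: your route needs neither the density of $\mathcal{S}(\G)$ in $W^{\nu,p}(\G)$ nor \eqref{desig_aux} --- indeed you use only the algebraic preamble of Theorem \ref{Teo_R_puntual}, not its conclusion --- at the cost of the vector-valued Fubini and strong-measurability bookkeeping, which you correctly identify and discharge; the paper keeps all measure theory scalar at the cost of the approximation apparatus.

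Two blemishes in your write-up are inherited from the paper and are not held against you: the correct identity is $T_tf-f=\tfrac12\int_{\G}\Delta_y^2 f\,h_t(y)\,dy$, so the factor $\tfrac12$ you carry in the intermediate step should propagate to the final formula (you drop it, exactly as the paper itself does in the proof of Theorem \ref{Teo_R_puntual}); and both proofs take for granted that $\sum_{j,k}\|X_jX_kf\|_{L^p(\G)}<\infty$ for $f\in W^{\nu,p}(\G)$ when invoking Theorem \ref{teo_second order_for_Sobolev}. Also, your parenthetical ``$2/\nu<1$'' should read ``$2/\nu\leq 1$'' (the case $\nu=2$ is allowed), which is immaterial since all you need is $\Re(\alpha)<1$.
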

\begin{proof}
Since $\nu\geq 2$, $f\in \mathbb{W}_{p;2}(\G)$ and so the following
$$ \int_\G k_\alpha(y)\, \Delta_y^2f \,  dy$$
is a Bochner integral.
Indeed, using Theorem \ref{teo_second order_for_Sobolev}    \begin{align*}
    \int_\G &\|k_\alpha(y) \, \Delta_y^2f\|_{L^p(\G)} \, dy\\    &= \int_{|y|\leq 1}\|\Delta_y^2f\|_{L^p(\G)}|k_\alpha(y)| \, dy + \int_{|y|> 1}\|\Delta_y^2f\|_{L^p(\G)}|k_\alpha(y)| \, dy\\
   &\leq \sum_{j,k=1}^n\left\|X_jX_kf\right\|_{L^p(\G)}\int_{|y|\leq 1}|y|^2|k_\alpha(y)| \, dy
   +3\|f\|_{L^p(\G)} \int_{|y|> 1}|k_\alpha(y)| \, dy
\end{align*}
which are finite, provided that  and  $0<\Re(\alpha)\nu<2$,  by using \eqref{k-alpha-bound} in Lemma \ref{lemma-k-alpha} and polar coordinates on $\G$ (\cite[Proposition 3.1.42]{FR-book}). 
We know, from Theorem \ref{Teo_R_puntual}, that 
$$ \mathcal{R}_p^{\alpha} f(x) = \int_{\G} k_\alpha(y) \, \Delta_y^2f(x)   \; dy \qquad \forall f\in \mathcal{S}(\G).$$
Arguing by density we will obtain the result.
Indeed, given $f\in \mathbb{W}_{p;\nu}(\G)$ we consider a sequence of functions $f_m \in \mathcal{S}(\G)$ such that
$$ f_m \to f \quad \hbox{in} \;\mathbb{W}_{p;\nu}(\G) .$$
We have that
$$ \mathcal{R}_p^{\alpha} f_m = \int_{\G} k_\alpha(y) \, \Delta_y^{2} f_m   \; dy. $$
On the one hand, from \eqref{desig_aux}  we have
$$ \mathcal{R}_p^\alpha f_m \to \mathcal{R}_p^\alpha f \quad \hbox{in} \, L^p(\G).$$
On the other hand, by applying the version of the dominated convergence theorem for the Bochner integral we have
$$
 \lim_{n\to\infty}\int_{\G} k_\alpha(y) \, \Delta_y^{2} f_m   \; dy =
 \int_{\G} k_\alpha(y) \, \Delta_y^{2} f \, dy \quad \text{ in } L^p(\G)
$$
since $$k_\alpha(y) \,\Delta_y^{2} f_m \to  k_\alpha(y) \,\Delta_y^{2} f \quad \text{ in } L^p(\G), \, \text{ for every } y\in\G$$ and $$\|k_\alpha(y)\Delta_y^{2} f_m\|_{L^p(\G)} \leq g(y), $$
for some $g\in L^1(\G)$: In fact, given $\varepsilon>0$ there exists $M\in\N$ such that for all $m>M$, $\|f_m-f\|_{\mathbb{W}_{p;\nu}(\G)}< \varepsilon$. Then,
\begin{align*}
    \| \Delta_y^{2}(f_m-f)\|_{L^p(\G)}&=
\| \Delta_y^{2}(f_m-f)\|_{L^p(\G)} \, \chi_{\{y: \, |y|\leq 1\}} \\
&\qquad + \| \Delta_y^{2}(f_m-f)\|_{L^p(\G)}\, \chi_{\{y: \, |y|> 1\}}\\
&\lesssim \left(\sum_{j,k=1}^n \|X_jX_k(f_m-f_m)\|_{L^p(\G)}\right)|y|^2 \, \chi_{\{y: \, |y|\leq 1\}}\\
& \qquad +3\|f_m-f\|_{L^p(\G)} \, \chi_{\{y: \, |y|\leq 1\}}\\
&\leq C_{\varepsilon} \left[|y|^2 \, \chi_{\{y: \, |y|\leq 1\}}+ \chi_{\{y: \, |y|> 1\}}\right] \qquad \forall n\geq M
\end{align*}
and so we can take
\begin{align*}
g(y):= k_\alpha(y)\left( \| \Delta_y^{2}f\|_{L^p(\G)} + C_{\varepsilon} \left[|y|^2 \, \chi_{\{y: \, |y|\leq 1\}}+ \chi_{\{y: \, |y|> 1\}}\right]   \right)
\end{align*}
which is integrable following the same arguments given at the beginning of this proof.
\end{proof}

\begin{remark}
Finally, we point out that by the symmetry of $k_\alpha$,
$$ \int_{|y|\geq  \varepsilon} [f(x\cdot y) + f(x\cdot y^{-1})-2f(x) ] \, k_\alpha(y) \; dy = 2 \int_{|y|\geq \varepsilon}  [f(x\cdot y) - f(x) ] \, k_\alpha(y) \; dy   $$
so we can write
\begin{align*} 
\mathcal{R}_p^\alpha f &=  \int_{\G} \left(\Delta_y^2f\right) \, k_\alpha(y) \, dy =
\lim_{\varepsilon \to 0} \int_{\G} \left(\Delta_y^2f\right) \, k_\alpha(y) \, dy \\
&=  2 \, \hbox{P.V.} \int_\G [f(y) - f(x) ] \, k_\alpha(y\cdot x^{-1}) \; dy  
\end{align*}
For $0<Re(\alpha)<\frac{1}{\nu}$ the principal value can be omitted, as the integral is convergent (see  \cite{DPV} for the situation in the Euclidean setting).
\end{remark}

\section{The Littlewood-Paley-Stein g--function}\label{sec_g-function}

In this section, we will prove Theorem \ref{g_equiv_norm}. We first consider the case $p=2$.

\subsection{The L2 case}

  In this case, the result follows from spectral theory. Indeed, the following lemma asserts that the square function   $g_\alpha$ defined by \eqref{def_g_alpha} is up to a constant an isometry in 
$L^2(\G)$.

\begin{lemma}\label{usando_spectral_th} Let $\G$ be a graded Lie group. 
For any $\alpha \in \C$ with $\Re(\alpha) > 0$, $g_\alpha(f)$ is well defined for $f\in L^2(\G)$ and we have that 
$$ \| g_\alpha(f) \|_{L^2(\G)} = c \; \| f \|_{L^2(\G)} $$
with $c=\Gamma(2\Re(\alpha))$.
In particular, $g_\alpha(f)(x)$ is finite for almost every $x \in \G$.
\end{lemma}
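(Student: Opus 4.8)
The plan is to use the spectral decomposition of $\mathcal{R}$ given by the projection-valued measure $P_\lambda$ in \eqref{frac_calculus_esp_th}, together with Fubini's theorem, to reduce the computation of $\|g_\alpha(f)\|_{L^2(\G)}^2$ to an elementary integral in the spectral variable. First I would write, by the definition \eqref{def_g_alpha} of $g_\alpha$ and Tonelli's theorem (the integrand is nonnegative), that
\begin{equation*}
\| g_\alpha(f) \|_{L^2(\G)}^2 = \int_{\G} \int_0^\infty |(t\mathcal{R})^\alpha T_t f(x)|^2 \, \frac{dt}{t} \, dx = \int_0^\infty t^{2\Re(\alpha)} \, \| \mathcal{R}^\alpha T_t f \|_{L^2(\G)}^2 \, \frac{dt}{t}.
\end{equation*}
For each fixed $t>0$, the operator $\mathcal{R}^\alpha T_t = \int_0^\infty \lambda^\alpha e^{-t\lambda} \, dP_\lambda$ is a bounded function of $\mathcal{R}$, so by the spectral theorem its $L^2$ norm is computed through the scalar spectral measure $d\|P_\lambda f\|^2$.

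The key step is then to apply the spectral theorem to turn $\|\mathcal{R}^\alpha T_t f\|_{L^2}^2$ into $\int_0^\infty \lambda^{2\Re(\alpha)} e^{-2t\lambda} \, d\|P_\lambda f\|_{L^2}^2$, insert this into the $t$-integral, and swap the order of integration by Tonelli once more. This yields
\begin{equation*}
\| g_\alpha(f) \|_{L^2(\G)}^2 = \int_0^\infty \left( \int_0^\infty t^{2\Re(\alpha)} \lambda^{2\Re(\alpha)} e^{-2t\lambda} \, \frac{dt}{t} \right) d\|P_\lambda f\|_{L^2}^2.
\end{equation*}
The inner integral is evaluated by the substitution $u = 2t\lambda$, which (using $\int_0^\infty u^{2\Re(\alpha)} e^{-u}\,\frac{du}{u} = \Gamma(2\Re(\alpha))$) produces a constant independent of $\lambda$: indeed the powers of $\lambda$ cancel against the Jacobian, leaving $2^{-2\Re(\alpha)}\Gamma(2\Re(\alpha))$. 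Since $\int_0^\infty d\|P_\lambda f\|_{L^2}^2 = \|f\|_{L^2(\G)}^2$ by the spectral theorem, one obtains $\|g_\alpha(f)\|_{L^2(\G)}^2 = c^2 \|f\|_{L^2(\G)}^2$ for the appropriate constant $c$, and finiteness of $g_\alpha(f)(x)$ for a.e. $x$ follows immediately from the finiteness of the $L^2$ norm.

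I expect the main technical point to be the justification of the interchanges of integration and, more subtly, reconciling the claimed constant: the computation above naturally gives $c = 2^{-\Re(\alpha)}\sqrt{\Gamma(2\Re(\alpha))}$, which differs from the stated $c = \Gamma(2\Re(\alpha))$, so I would double-check the normalization conventions (whether a factor of $2$ or a square is absorbed into the definition of $g_\alpha$ or of $c$) and adjust the constant accordingly. The rest is routine once the Tonelli applications are justified by nonnegativity of all integrands.
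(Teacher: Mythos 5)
Your proposal is correct and follows essentially the same route as the paper: the spectral decomposition $\mathcal{R}=\int_0^\infty \lambda\, dP_\lambda$, two applications of Tonelli, and evaluation of the inner integral $\int_0^\infty (t\lambda)^{2\Re(\alpha)}e^{-2t\lambda}\,\frac{dt}{t}$ by a substitution that makes it independent of $\lambda$. Your constant $c=2^{-\Re(\alpha)}\sqrt{\Gamma(2\Re(\alpha))}$ is in fact the correct one: the paper's own computation drops the factor $2^{-2\Re(\alpha)}$ arising from the substitution $u=2t\lambda$ and moreover states $c=\Gamma(2\Re(\alpha))$ where its proof yields $c^2=\Gamma(2\Re(\alpha))$, but since only the equivalence of norms is ever used, this discrepancy is harmless.
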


\begin{proof}
We use the Spectral Theorem applied to the self-adjoint unbounded operator 
$\mathcal{R}$ in the Hilbert space $L^2(\G)$  (see for instance 
\cite[Theorem VIII.6]{Reed-Simon}). Equation \eqref{frac_calculus_esp_th}  asserts that 
there exists a projection-valued measure  $P_\lambda$ such that 
$$ \mathcal{R} = \int_0^\infty \lambda \; dP_\lambda .$$
For any fixed $t \in (0,+\infty)$, we consider the function 
$$ \psi_t(\lambda) = (t\lambda)^\alpha e^{-\lambda t} , $$
which is bounded and Borel measurable. Then, $\psi_t(\mathcal{R})=  
(t\mathcal{R})^\alpha T_t f $ is well-defined and we have that 
\begin{align*}
\| \psi_t(\mathcal R) f \|_{L^2(\G)}^2 &=
\langle \psi_t(\mathcal R) f ,  \psi_t(\mathcal R) f  \rangle  
= \langle |\psi_t|^2(\mathcal R) f, f \rangle \\
&= \int_0^\infty |\psi_t|^2(\lambda) \; d\langle f, P_\lambda(f) \rangle \\
&= \int_0^\infty (t\lambda)^{2 \Re(\alpha)} e^{-2\lambda t} \; d\langle f, P_\lambda(f) \rangle.
\end{align*}
Moreover, we observe that by Fubini-Tonelli theorem we have  
	\begin{align*}
		\| g_\alpha(f) \|_{L^2(\G)}^2 &= \int_{\G} \left( \int_0^\infty |(t\mathcal{R})^\alpha T_t f|^2 \frac{dt}{t} \right) \; dx \\
		&= \int_0^\infty  \left(  \int_{\G} |(t\mathcal{R})^\alpha T_t f|^2  \; dx \right) \; \frac{dt}{t} \\
		&= \int_0^\infty  \| (t\mathcal{R})^\alpha T_t f \|_{L^2(\G)}^2 \; \frac{dt}{t} \\
		&=  \int_0^\infty  \left[ \int_0^\infty (t\lambda)^{2\Re(\alpha)} e^{-2\lambda t} \; d\langle f, P_\lambda(f) \rangle \right]
		\; \frac{dt}{t} \\
		&=  \int_0^\infty  \left[ \int_0^\infty (t\lambda)^{2 \Re(\alpha)} e^{-2\lambda t} 
		\; \frac{dt}{t}    \right] \; d\langle f, P_\lambda(f) \rangle \\
		&= c^2  \int_0^\infty  d\langle f, P_\lambda(f) \rangle = c^2 \| f \|_{L^2(\G)}^2
	\end{align*}
	with
	$$ c^2 = \int_0^\infty u^{2 \Re(\alpha)} e^{-u} 
	\; \frac{du}{u} = \Gamma(2 \Re(\alpha)) < \infty. $$  
\end{proof}

\subsection{The Littlewood-Paley-Stein g--function associated to an approximation of the identity}

Given a function $\phi$ on $\G$, we consider the approximation of the identity generated 
by $\phi$ (as defined in \cite[Section 3.1.10]{FR-book})\footnote{We use the notation $\phi_{(t)}$ for approximations of the identity in order to avoid possible confusion with the function $\phi_\alpha$ introduced in Lemma \ref{phi_properties} below.}  
$$\phi_{(t)} = \frac{1}{t^{Q}} \, \, \phi \circ D_{t^{-1}}$$   
and the associated square function  
\be g_\phi f(x)= \left( \int_0^\infty |\phi_{(t)} *f(x)|^2 \frac{dt}{t} \right)^{1/2} \label{g-phi} \ee
We will prove an analog of a theorem of A. Benedeck, A. Calderón and R. Panzone \cite[page 363]{BCP} 
to the setting of homogeneous groups, which can be deduced following the arguments due to A. Kor\'{a}nyi and S. V\'{a}gi 
\cite{Korangi-Vagi}.
It follows from a vector-valued-version of the Calderón-Zygmund singular integral theory. See also 
\cite[Chapter XII]{Torchinsky} for a nice exposition in the Euclidean setting.

\begin{proposition}\label{prop_KV}
Let $E$ be a Banach space and let $k:\G\to E$ be a locally integrable function satisfying the following
condition of Hörmander type: 
\begin{equation*}
   \int_{|g|>a|h|} |k(h^{-1}\cdot g)-k(g))| \; dg \leq b 
\end{equation*}
for some positive constants $a,b>0$.
Consider the vector-valued
convolution operator $A$ defined by  
$$Af(x)=\int_\G k(y^{-1}\cdot x)f(y) \, dy \qquad (x\in \G)$$
for every function of compact support on $L^\infty(\G)$. If
$A$ is of weak type $(r,r)$ for some $r>1$, that is, there exists a positive constant $C>0$
$$|\{g\in \G:\, \|Af(g)\|_E> \lambda\}|\leq c\left(\frac{1}{\lambda}\|Af\|_{L^r(\G,E)}\right)^r,$$
then, for all $1<p<r$, $Af\in L^p(\G,E)$ and $$\|Af\|_{L^p(\G,E)}\leq c_p\|f\|_{L^p(\G)}$$
where $c_p$ depends only on $a,b,c$ and $p$.
\end{proposition}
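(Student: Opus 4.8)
The plan is to obtain the strong type $(p,p)$ bounds by proving a weak type $(1,1)$ estimate and then interpolating with the assumed weak type $(r,r)$ bound through the Marcinkiewicz interpolation theorem, in its vector-valued form (absolute values replaced by the norm $\|\cdot\|_E$, and the integrals defining $Af$ read as Bochner integrals). Since $\G$ endowed with a homogeneous quasi-norm is a space of homogeneous type — the balls satisfy $|B(x,r)|=r^Q$, so the doubling property holds, and $d(x,y)=|x^{-1}\cdot y|$ is a symmetric quasi-metric by property ii) of the quasi-norm, with quasi-triangle constant $\rho$ as in \eqref{triang} — the classical Calderón--Zygmund machinery is available. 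It therefore suffices to prove that $A$ is of weak type $(1,1)$ on the dense class of compactly supported $L^\infty$ functions, with constant depending only on $a,b,c$; the passage to all of $L^p(\G)$ then follows by density.

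To establish the weak $(1,1)$ bound I would fix $f$ compactly supported and bounded (hence $f\in L^1(\G)$) and $\lambda>0$, and apply the Calderón--Zygmund decomposition at height $\lambda$ adapted to the homogeneous-type structure of $\G$. This yields $f=g+b$ with $b=\sum_j b_j$, where $\|g\|_{L^\infty(\G)}\lesssim\lambda$ and $\|g\|_{L^1(\G)}\leq\|f\|_{L^1(\G)}$; each $b_j$ is supported in a ball $B_j=B(y_j,r_j)$, has vanishing integral $\int_\G b_j=0$ and satisfies $\|b_j\|_{L^1(\G)}\lesssim\lambda|B_j|$; and $\sum_j|B_j|\lesssim\lambda^{-1}\|f\|_{L^1(\G)}$. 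Writing $|\{\|Af\|_E>\lambda\}|\leq|\{\|Ag\|_E>\lambda/2\}|+|\{\|Ab\|_E>\lambda/2\}|$, I would treat the two pieces separately.

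For the good part, the weak $(r,r)$ hypothesis and Chebyshev give $|\{\|Ag\|_E>\lambda/2\}|\lesssim\lambda^{-r}\|g\|_{L^r(\G)}^r$, and since $\|g\|_{L^r(\G)}^r\leq\|g\|_{L^\infty(\G)}^{r-1}\|g\|_{L^1(\G)}\lesssim\lambda^{r-1}\|f\|_{L^1(\G)}$, this is $\lesssim\lambda^{-1}\|f\|_{L^1(\G)}$. For the bad part I would enlarge each $B_j$ to $B_j^*=B(y_j,\kappa r_j)$ with $\kappa\geq a$ a fixed constant, and set $\Omega^*=\bigcup_j B_j^*$, so that $|\Omega^*|\leq\kappa^Q\sum_j|B_j|\lesssim\lambda^{-1}\|f\|_{L^1(\G)}$. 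On $\G\setminus\Omega^*$ I would use the cancellation $\int b_j=0$ to write
\be
Ab_j(x)=\int_{B_j}\bigl[\,k(y^{-1}\cdot x)-k(y_j^{-1}\cdot x)\,\bigr]\,b_j(y)\;dy,
\ee
and then, after the substitution $u=y_j^{-1}\cdot x$, $w=y_j^{-1}\cdot y$ (so that $y^{-1}\cdot x=w^{-1}\cdot u$ and $dx=du$ by unimodularity), Fubini together with the Hörmander condition gives $\int_{\G\setminus\Omega^*}\|Ab_j\|_E\,dx\lesssim b\,\|b_j\|_{L^1(\G)}$. Summing in $j$ bounds $\int_{\G\setminus\Omega^*}\|Ab\|_E\,dx\lesssim b\,\|f\|_{L^1(\G)}$, and Chebyshev controls the bad part off $\Omega^*$, while $\Omega^*$ itself has measure $\lesssim\lambda^{-1}\|f\|_{L^1(\G)}$.

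The crux is the geometric bookkeeping that makes the Hörmander hypothesis applicable, which is precisely where the homogeneous-group structure is used. For each $y\in B_j$ one has $|w|<r_j$, while for $x\notin B_j^*$ one has $|u|\geq\kappa r_j$; choosing $\kappa\geq a$ forces $|u|\geq\kappa r_j\geq a r_j>a|w|$, so the region of integration in $x$ sits inside the Hörmander domain $|g|>a|h|$ (with $g=u$, $h=w$), and the bound by the hypothesized constant $b$ is genuine. The other technical point, and the main obstacle, is establishing the Calderón--Zygmund decomposition itself on the space of homogeneous type $\G$ — this requires the doubling estimate and a Whitney/Vitali-type covering argument, where the quasi-triangle constant $\rho$ truly enters. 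Once these two ingredients are in place, the remainder is a direct transcription of the scalar Benedek--Calderón--Panzone argument, with $|\cdot|$ replaced by $\|\cdot\|_E$ and the triangle inequality in the Banach space $E$ invoked at the cancellation step.
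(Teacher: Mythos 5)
Your proposal is correct and is, in substance, the same argument the paper relies on: the paper's own proof is a one-line citation of Theorem 2.1 and Lemma 2.2 of Kor\'anyi--V\'agi (which generalize Lemma 1 of Benedek--Calder\'on--Panzone), and those results are established by precisely the route you take --- Calder\'on--Zygmund decomposition on the homogeneous-type structure given by the quasi-norm, the cancellation-plus-H\"ormander estimate for the bad part after translating by $y_j^{-1}$ with dilation factor $\kappa \geq a$, a weak $(1,1)$ bound, and Marcinkiewicz interpolation against the assumed weak $(r,r)$ bound. The only difference is that you reconstruct the proof of the cited theorem rather than invoking it, which is legitimate since all Calder\'on--Zygmund pieces of a compactly supported bounded $f$ remain compactly supported and bounded, so the operator $A$ applies to them as defined.
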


\begin{proof}
Is a consequence of Lemma 2.2 and Theorem 2.1 in \cite{Korangi-Vagi}, which generalizes Lemma 1 in \cite{BCP}.
\end{proof}

\begin{theorem}	\label{thm-KV}
	Let $\G$ be a homogeneous group. Let $\phi:\G \to \C$ such that for some $C,\varepsilon>0$ satisfies
	\begin{enumerate}
		\item[(i)] for every $x\in\G$,
		\be |\phi(x)| \leq C \; (1+|x|)^{-Q-\varepsilon}; \label{CBP-1} \ee 
		\item[(ii)]  for every $ h \in \G$, 
		$$ \int_{\G} |\phi(x\cdot h^{-1}) - \phi(x)| \; dx \leq C \; |h|^{\varepsilon} \quad \text{ and } \quad \int_{\G} |\phi(h^{-1}\cdot x) - \phi(x)| \; dg \leq C \; |h|^{\varepsilon}; $$
		
		\item[(iii)]  $g_\phi$ is of weak type $(r,r)$ for some $r> 1$.
	\end{enumerate}    
	Then, for each $1<p<r$, $$\|g_\phi(f)\|_{L^p(\G)}\leq c_p\|f\|_{L^p(\G)}$$
	 for every $f\in L^\infty(\G)$ of compact support. 
\end{theorem}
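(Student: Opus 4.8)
The plan is to recognize Theorem \ref{thm-KV} as a scalar-valued square function estimate that fits exactly into the vector-valued Calderón--Zygmund framework established in Proposition \ref{prop_KV}. The key observation is that $g_\phi(f)(x)$ is the $E$-norm of a single vector-valued convolution operator, where $E = L^2\left((0,\infty), \frac{dt}{t}\right)$. Concretely, I would define the $E$-valued kernel
\[
k(x) = \left( \phi_{(t)}(x) \right)_{t>0} = \left( \frac{1}{t^Q}\, \phi\bigl(D_{t^{-1}}(x)\bigr) \right)_{t>0},
\]
so that the associated convolution operator $A$ sends $f$ to the $E$-valued function $Af(x) = \bigl(\phi_{(t)}*f(x)\bigr)_{t>0}$, and then $\|Af(x)\|_E = g_\phi(f)(x)$ by the very definition \eqref{g-phi}. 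With this identification, the desired conclusion $\|g_\phi(f)\|_{L^p(\G)} = \|Af\|_{L^p(\G,E)} \leq c_p\|f\|_{L^p(\G)}$ is precisely the output of Proposition \ref{prop_KV}, provided I can verify its two hypotheses: the Hörmander-type condition on $k$ and the weak type $(r,r)$ bound on $A$.

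The weak type $(r,r)$ hypothesis of Proposition \ref{prop_KV} is handed to us directly by assumption (iii), since weak type $(r,r)$ for the scalar function $g_\phi$ is the same statement as weak type $(r,r)$ for the $E$-valued operator $A$ (again because $\|Af\|_E = g_\phi f$). So the real work is the vector-valued Hörmander condition
\[
\int_{|x|>a|h|} \|k(h^{-1}\cdot x) - k(x)\|_E \; dx \leq b.
\]
Here I would estimate the $E$-norm of the kernel difference using Minkowski's integral inequality to pass the $L^2(dt/t)$ norm inside, reducing matters to controlling $\int_{|x|>a|h|} \left( \int_0^\infty |\phi_{(t)}(h^{-1}\cdot x) - \phi_{(t)}(x)|^2 \frac{dt}{t}\right)^{1/2} dx$. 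The standard dyadic-scaling strategy is to split the $t$-integral at the scale $t \approx |h|$: for small $t$ (where the translation by $h^{-1}$ is large relative to $t$) one uses the size bound \eqref{CBP-1} on $\phi$ directly, while for large $t$ one exploits the smoothness/regularity encoded in assumption (ii), rewriting the translated difference at scale $t$ in terms of the unscaled difference $\phi(\cdot \cdot (D_{t^{-1}}h)^{-1}) - \phi(\cdot)$, whose integral is controlled by $C|D_{t^{-1}}h|^\varepsilon = C(|h|/t)^\varepsilon$ via the second estimate in (ii). The homogeneity \eqref{k-alpha-scalling}-type scaling of $\phi_{(t)}$ together with the $Q$-homogeneity of Haar measure makes every such estimate scale-invariant, so after integrating in $t$ one obtains a bound independent of $h$, which is exactly the constant $b$ required.

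I expect the main obstacle to be the bookkeeping in the Hörmander estimate: carefully organizing the split in $t$, applying the correct scaling of the dilated kernel $\phi_{(t)}$, and ensuring that assumptions (i) and (ii) are each invoked on the regime where they give a \emph{convergent} and $h$-\emph{uniform} contribution. In particular, one must check that the decay exponent $\varepsilon>0$ from the size bound (i) and the Hölder-type gain $(|h|/t)^\varepsilon$ from the smoothness bound (ii) combine so that both the $t<|h|$ and $t>|h|$ portions of the integral converge and sum to a finite constant; the triangle inequality \eqref{triang} for the quasi-norm and the polar-coordinate integration on $\G$ (\cite[Proposition 3.1.42]{FR-book}) are the technical tools that make this precise. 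Once the Hörmander condition is established, the proof is complete by a direct appeal to Proposition \ref{prop_KV} with $E = L^2\bigl((0,\infty),\tfrac{dt}{t}\bigr)$ and the given exponent $r$.
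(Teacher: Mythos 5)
Your reduction is exactly the one the paper uses: you introduce the same Banach space $E=L^2\left((0,\infty),\frac{dt}{t}\right)$, the same $E$-valued kernel $k(x)(t)=\phi_{(t)}(x)$, observe that $\|Af(x)\|_E=g_\phi(f)(x)$ by \eqref{g-phi}, note that hypothesis (iii) is verbatim the weak type $(r,r)$ hypothesis of Proposition \ref{prop_KV}, and conclude by that proposition. The only point where you and the paper diverge is the Hörmander condition: the paper does not prove it, but cites \cite[Theorem 5.2]{Korangi-Vagi} and the computation in \cite{BCP} (page 364), whereas you sketch a direct verification. A self-contained verification would be a nice addition, but your sketch of it contains a genuine gap.

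The gap is in the large-$t$ regime of your splitting. (The small-$t$ part is fine: for $|x|>a|h|$ with $a$ large relative to the constant $\rho$ of \eqref{triang}, the size bound \eqref{CBP-1} gives $|\phi_{(t)}(h^{-1}\cdot x)|+|\phi_{(t)}(x)|\lesssim t^{\varepsilon}|x|^{-Q-\varepsilon}$, and computing the $L^2(dt/t)$ norm over $t<|h|$ pointwise in $x$ and then integrating in $x$ by polar coordinates yields a bound independent of $h$.) For $t\gtrsim|h|$, hypothesis (ii) after rescaling indeed gives, \emph{for each fixed} $t$, the bound $\int_\G|\phi_{(t)}(h^{-1}\cdot x)-\phi_{(t)}(x)|\,dx\leq C(|h|/t)^{\varepsilon}$; but the quantity you must control is the mixed norm $\int_{|x|>a|h|}\bigl(\int_{|h|}^{\infty}|\Delta_t(x)|^2\frac{dt}{t}\bigr)^{1/2}dx$ with $\Delta_t(x)=\phi_{(t)}(h^{-1}\cdot x)-\phi_{(t)}(x)$, i.e.\ an $L^1_x(L^2_t)$ norm, and Minkowski's integral inequality runs the wrong way here: it gives $\|\Delta\|_{L^2_t(L^1_x)}\leq\|\Delta\|_{L^1_x(L^2_t)}$, so ``integrating in $t$'' the per-$t$ $L^1_x$ bounds from (ii) does not bound the Hörmander integral. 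The missing idea is an interpolation between (i) and (ii): for instance, on each dyadic block $t\approx 2^k|h|$ estimate the block's contribution by $\bigl(\sup_{t}|\Delta_t(x)|\bigr)^{1/2}\bigl(\int|\Delta_t(x)|\frac{dt}{t}\bigr)^{1/2}$ and apply Cauchy--Schwarz in $x$; the factor involving the supremum is controlled uniformly in $k$ by the pointwise bounds $|\Delta_t(x)|\lesssim\min\bigl(t^{-Q},\,t^{\varepsilon}|x|^{-Q-\varepsilon}\bigr)$ coming from (i), while the factor involving the $L^1$ average is controlled by (ii) with geometric decay $2^{-k\varepsilon/2}$, and the sum over $k\geq 0$ converges. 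With that (or simply with the citation the paper uses) your argument closes; as written, the large-$t$ step would fail.
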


\begin{proof}
    Since we want to use the results of \cite{Korangi-Vagi}, we consider the convolution operator 
    \begin{equation}\label{operator_A}
    Af(x)=\int_\G k(y^{-1}\cdot x)f(y) \, dy
    \end{equation}
    which kernel $k$ is given by the vector-valued function
    \begin{gather*}
        k:\G\to L^2\left((0,\infty),\frac{dt}{t}\right)\\
        k(x)(t) :=\phi_{(t)}(x).
    \end{gather*}
Then for each $x\in \G$, the norm of $Af(x)$ in $E:=L^2\left((0,\infty),\frac{dt}{t}\right)$ is equal to 
$g_\phi(x) $, and for $p\geq 1$
\begin{equation}\label{acot_A}
     \| Af \|_{L^p(\G, E)}=\|g_\phi(f)\|_{L^p(\G)}.
\end{equation}
Since $\phi$ satisfies (i), $\phi\in L^1(\G)$ and 
it is easy to check that 
\begin{equation*}
    |k(x)|\leq \frac{C \, }{|x|^Q} \qquad \forall x\in\G.
\end{equation*}
Thus, $k$ is locally integrable.
Moreover, using (ii), $k$ satisfies the following condition of Hörmander type \begin{equation}\label{cond_ii}
   \int_{|g|>\rho|h|} |k(h^{-1}\cdot g)-k(g))| \; dg \leq c 
\end{equation}
for some constant $c>0$, and where $\rho$ is the constant such that  \eqref{triang} holds. The proof of \eqref{cond_ii} is given in \cite[Theorem 5.2]{Korangi-Vagi} and also is verified exactly as in the original paper \cite{BCP} (page 364). 
Also, by \eqref{acot_A} and hypothesis (iii), the operator $A$ is of weak type $(r,r)$ for some $r>1$. 
Therefore, the hypotheses on the operator $A$ in Proposition \ref{prop_KV} are reached, and we have that for every $1<p<r$,  $Af\in L^p(\G,E)$ and $\|Af\|_{L^p(\G,E)}\leq c_p\|f\|_{L^p(\G)}$ for every $f\in L^\infty(\G)$ of compact support.  
\end{proof}

\subsection{The Lp case}

In this section, we will prove Theorem \ref{g_equiv_norm} for $1<p<\infty$. 

\begin{lemma}\label{phi_properties}
Let $\G$ be a graded Lie group. For any $\alpha \in \C$ with $\Re(\alpha)>0$, let us consider the function
\begin{equation}\label{phi_alpha}
  \phi_\alpha=\mathcal{R}^\alpha(h_1).   
\end{equation}
Then, the following assertions hold:
\begin{enumerate}
\item[(P0)] $\phi_\alpha$ is well defined and belongs to the space $\mathcal{BC}(\G)$ of bounded continuous functions on $\G$. For any fixed $x$, $\phi_\alpha(x)$ is an analytic function of $\alpha$ in the region $\Re(\alpha)>0$ of the complex plane.	
\label{phi_properties:P0}
\item[(P1)] For any $\alpha>0$ 
we have that  
$$  g_\alpha(f) =  \nu \;  g_{\phi_\alpha}(f) $$
\label{phi_properties:P1}
\item[(P2)] Let $m \in \N$. When $0<\Re(\alpha)<m$, we have the following explicit expression for $\phi_\alpha$, in terms of the time derivative of the heat kernel
\be \phi_\alpha(x) =  (-1)^m \frac{1}{ \Gamma(m-\alpha)} \int_{1}^\infty \partial_t^m h_t(x) (t-1)^{m-1-\alpha}  \; dt. 
\label{explicit-phi} \ee
\label{phi_properties:P2}
\item[(P3)] We have the following estimate for $\phi_\alpha$
$$ |\phi_\alpha(x)| \leq C \; (1+|x|)^{-Q-\Re(\alpha) \nu} \qquad \forall x\in \G$$
\label{phi_properties:P3}
\item[(P4)] As a function of $x$, $\phi_\alpha(x)$ is $C^\infty$ and we have 
$$ |X^J \phi_\alpha(x)| \leq C \; (1+|x|)^{-Q-\Re(\alpha) \nu-|J|} \qquad \forall x\in \G$$
for any vector field $X$ and any  multi-index $J$.
\label{phi_properties:P4}
\item[(P5)] The function $\phi_\alpha$ is symmetric 
$$ \phi_\alpha(x^{-1})= \phi_\alpha(x) \qquad \forall x\in \G.$$
\label{phi_properties:P5}
\end{enumerate}	
\end{lemma}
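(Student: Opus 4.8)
The plan is to make the explicit formula (P2) the cornerstone and read off everything else from it: once $\phi_\alpha$ is written as an absolutely convergent time–integral of heat–kernel derivatives, the regularity in (P0), the decay in (P3)--(P4), and the symmetry (P5) all reduce to the Gaussian bounds of Theorem \ref{theorem-DHZ} together with the symmetry and scaling of $h_t$ in Theorem \ref{ht_properties}. Only the equivalence (P1) with the square function is proved separately, by homogeneity.

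First I would establish (P2). Since each $h_t\in\mathcal S(\G)$ (Theorem \ref{ht_properties}), $h_1$ lies in the domain of every power of $\mathcal{R}$, so $\phi_\alpha=\mathcal{R}^\alpha h_1$ is well defined in $L^2(\G)$ via the spectral resolution \eqref{frac_calculus_esp_th}. Fix an integer $m>\Re(\alpha)$. For $t>1$ the semigroup property gives $h_t=T_{t-1}h_1$, hence $\partial_t^m h_t=(-\mathcal{R})^m T_{t-1}h_1$, and the substitution $u=t-1$ turns the right–hand side of \eqref{explicit-phi} into $\frac{1}{\Gamma(m-\alpha)}\int_0^\infty \mathcal{R}^m T_u h_1\,u^{m-1-\alpha}\,du$, the two signs $(-1)^m$ cancelling. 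Pairing against an arbitrary $g\in L^2(\G)$, inserting $\mathcal{R}^m T_u=\int_0^\infty\lambda^m e^{-\lambda u}\,dP_\lambda$, and applying Fubini for the spectral measure (legitimate because $h_1\in\mathrm{Dom}(\mathcal{R}^m)$), the inner integral $\int_0^\infty u^{m-1-\alpha}e^{-\lambda u}\,du=\Gamma(m-\alpha)\lambda^{\alpha-m}$ collapses the expression to $\int_0^\infty\lambda^\alpha\,d\langle P_\lambda h_1,g\rangle=\langle\mathcal{R}^\alpha h_1,g\rangle$. This proves \eqref{explicit-phi} as an $L^2$ identity for every such $m$ (consistency being automatic); the integral also converges absolutely and locally uniformly in $x$ by Theorem \ref{theorem-DHZ}, so it defines a continuous function which we take as the pointwise representative of $\phi_\alpha$, giving continuity in (P0). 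As the integrand is holomorphic in $\alpha$ and the convergence is uniform for $\alpha$ in compact subsets of $\{0<\Re(\alpha)<m\}$, Morera yields analyticity; since $m$ is arbitrary this holds on all of $\{\Re(\alpha)>0\}$.

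Granting (P2), the estimates (P3)--(P4) are the technical core. Bounding $|\phi_\alpha(x)|\le\frac{1}{|\Gamma(m-\alpha)|}\int_1^\infty|\partial_t^m h_t(x)|\,(t-1)^{m-1-\Re(\alpha)}\,dt$ and inserting $|\partial_t^m h_t(x)|\lesssim t^{-m-Q/\nu}\exp(-c|x|^{\nu/(\nu-1)}/t^{1/(\nu-1)})$ from Theorem \ref{theorem-DHZ}, the same change of variables used in Lemma \ref{lemma-k-alpha} controls the integral: for $|x|\le1$ it is bounded (so $\phi_\alpha\in\mathcal{BC}(\G)$, completing (P0)), while for large $|x|$ the Gaussian factor localizes the mass to $t\gtrsim|x|^\nu$, where the tail $\int^\infty t^{-1-Q/\nu-\Re(\alpha)}\,dt\sim|x|^{-Q-\Re(\alpha)\nu}$ gives exactly the decay (P3). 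For (P4) I would differentiate \eqref{explicit-phi} under the integral (justified by the uniform estimates) and use the bound of Theorem \ref{theorem-DHZ} for $\partial_t^m X^J h_t$, whose additional factor $t^{-|J|/\nu}$ translates, under the same substitution, into the extra decay $|x|^{-|J|}$. Property (P5) is immediate from \eqref{explicit-phi}: by Theorem \ref{ht_properties} one has $h_t(x^{-1})=h_t(x)$ for every $t$, and this symmetry survives $\partial_t^m$ and the $t$–integration.

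Finally, (P1) uses only scaling. The relation $h_{r^\nu t}(rx)=r^{-Q}h_t(x)$ of Theorem \ref{ht_properties} gives $h_t=t^{-Q/\nu}\,h_1\circ D_{t^{-1/\nu}}$, and combining this with the homogeneity \eqref{fractional-powers-scalling} of $\mathcal{R}_p^\alpha$ yields $t^{\alpha}\mathcal{R}^\alpha h_t=(\phi_\alpha)_{(t^{1/\nu})}$; together with \eqref{scalling-fractional-powers} this gives $(t\mathcal{R})^\alpha T_t f=f*(\phi_\alpha)_{(t^{1/\nu})}$. Substituting into \eqref{def_g_alpha} and using $t=s^{\nu}$, under which $\frac{dt}{t}=\nu\,\frac{ds}{s}$, reduces $g_\alpha(f)$ to the square function \eqref{g-phi} built from $\phi_\alpha$, the constant in (P1) being exactly the Jacobian of this substitution and the symmetry (P5) reconciling the side on which $\phi_\alpha$ is convolved. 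The main obstacle throughout is the careful justification of (P2): the spectral Fubini interchange, and above all the passage from the $L^2$ identity to the absolutely convergent \emph{pointwise} formula, since (P3)--(P5) all hinge on having \eqref{explicit-phi} available pointwise in $x$.
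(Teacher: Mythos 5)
Your proposal is sound, but it is organized around a genuinely different proof of the core item (P2), so the two routes are worth contrasting. The paper proves (P0) \emph{first}, by realizing the semigroup $T_t$ on $\mathcal{BC}(\G)$ and citing \cite[Theorem 3.1.5]{MCSA} for analyticity in $\alpha$; it then obtains (P2) only for $m=1$, from Komatsu's representation (Theorem \ref{Teo_MCSA}) together with the Fundamental Theorem of Calculus and Fubini, and reaches general $m$ by induction --- integration by parts plus analytic continuation, which is exactly where (P0) gets used. You instead prove (P2) for every $m>\Re(\alpha)$ in one stroke: writing the right-hand side of \eqref{explicit-phi} as $\frac{1}{\Gamma(m-\alpha)}\int_0^\infty u^{m-1-\alpha}\,\mathcal{R}^m T_u h_1\,du$ and collapsing it against a test function with the spectral theorem, after which (P0) falls out of the formula itself (continuity and boundedness from the uniformly convergent integral, analyticity by Morera). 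What your route buys: no induction, no boundary terms, no appeal to the external analyticity theorem, and a cleaner logical order (P2)$\Rightarrow$(P0). What it costs: you must identify the Komatsu--Balakrishnan power $\mathcal{R}^\alpha$ (the definition under which $\phi_\alpha$ is introduced) with the spectral functional-calculus power $\int_0^\infty \lambda^\alpha\,dP_\lambda$, and justify the spectral Fubini interchange; both are standard for the self-adjoint $L^2$ realization --- and the paper itself leans on this identification in Lemma \ref{usando_spectral_th} --- but each deserves an explicit sentence, as does your passage from the $L^2$ identity to the pointwise one via the continuous representative. Your treatments of (P1), (P3), (P4), (P5) coincide with the paper's; for (P3)--(P4) note that the computation you sketch is precisely what the paper packages as Lemma \ref{lemma-psi-bound}, which you could simply invoke.

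Two small inaccuracies, neither fatal. First, the substitution $t=\tau^\nu$ gives $dt/t=\nu\,d\tau/\tau$ \emph{inside} the square, so the constant in (P1) comes out as $\sqrt{\nu}$ rather than $\nu$; the paper's stated constant has the same defect, and it is harmless since only norm equivalence is ever used. Second, your parenthetical claim that (P5) ``reconciles the side on which $\phi_\alpha$ is convolved'' is not correct: on a noncommutative group, $\phi(x^{-1})=\phi(x)$ does not convert $f*\phi_{(t)}$ into $\phi_{(t)}*f$. The mismatch between the computation (which yields $f*(\phi_\alpha)_{(t^{1/\nu})}$) and the display \eqref{g-phi} (written with $\phi_{(t)}*f$) is a notational slip in the paper itself; what matters is that the kernel $k(x)(t)=\phi_{(t)}(x)$ in Theorem \ref{thm-KV} acts by right convolution, consistent with what both you and the paper derive. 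Drop the appeal to (P5) there and simply keep the convolution on the same side throughout.
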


\begin{proof}
We first prove property \hyperref[phi_properties:P0]{(P0)}. 
We consider the semigroup $T_t$	generated by $\mathcal{R}$ acting in the Banach space $\mathcal{BC}(\G)$ of bounded continuous functions on $\G$, to which $h_1$ belongs 
as a consequence of Theorem \ref{theorem-DHZ}. Indeed, since $h_1 \in \mathcal{S}(\G)$,  $h_1 \in \hbox{Dom}(\mathcal{R}^k)$ for every $k \in \N$. Hence $\phi_\alpha \in \mathcal{BC}(\G)$. It follows from 
Theorem 3.1.5 in \cite{MCSA} that the mapping $\alpha \mapsto \phi_\alpha$ is analytic in the region $\Re(\alpha)>0$, and hence the mapping 
 $\alpha \mapsto \phi_\alpha(x)$ is analytic for every $x\in \G$.
	
Next, we prove \hyperref[phi_properties:P1]{(P1)}. Let us consider the operator
	\begin{align*}
		A_t f &= (t\mathcal{R})^\alpha T_t f = t^\alpha \mathcal{R}^\alpha( f* h_t) = f * \left(t^\alpha  \mathcal{R}^\alpha(h_t)\right) 
	\end{align*}
	where we have used \eqref{scalling-fractional-powers}.
	Now, by the scaling property of the heat kernel
	$$h_t(x) = t^{-Q/\nu} \; h_1(D_{t^{-1/\nu}}(x)) .$$
	Then using \eqref{fractional-powers-scalling},
	\begin{align*}
		\mathcal{R}^\alpha(h_t)(x) &= t^{-Q/\nu} t^{-\alpha} \; \mathcal{R}^\alpha(h_1)(D_{t^{-1/\nu}}(x)) = t^{-\alpha} \left(\phi_\alpha\right)_{(t^{1/\nu})}(x).
	\end{align*}
	Hence
	$$ A_t f(x)= f *  \left(\phi_\alpha\right)_{(t^{1/\nu})} $$
	and 
	\begin{align*}
		g_\alpha(f)(x) &= \left( \int_0^\infty  |A_tf(x)|^2 \frac{dt}{t} \right)^{1/2} = \left( \int_0^\infty  |f *  \left(\phi_\alpha\right)_{(t^{1/\nu})}|^2 \frac{dt}{t} \right)^{1/2}.   
	\end{align*}
	The result follows by the change of variables $\tau= t^{1/\nu}$.
	
	In order to prove \hyperref[phi_properties:P2]{(P2)}, we proceed by induction on $m$. We first consider the case $m=1$ , so $0<\Re(\alpha)<1$. Let us recall the formula
	$$ \mathcal{R}^\alpha f = \frac{1}{K_{\alpha,1}} \lim_{\varepsilon  \to 0} 
	\int_\varepsilon^\infty t^{-\alpha-1} (I-T_t) \; f \; dt .$$
	Since $T_t h_1= h_t* h_1 = h_{1+t}$, we have that
	$$ \phi_\alpha(x)= \frac{1}{\Gamma(-\alpha)} \int_0^\infty [h_{1+r}(x) - h_1(x)] \, \frac{dr}{r^{1+\alpha}} .$$
	Using the Fundamental Theorem of Calculus, we write
	$$ h_{1+r}(x) - h_1(x) = \int_{1}^{1+r} \frac{\partial h_t}{\partial t}(x) \; dt .$$
	Then, using Fubini's theorem  we get 
	\begin{align*}
		\phi_\alpha(x) &= \frac{1}{\Gamma(-\alpha)} \int_0^\infty  \left[ \int_{1}^{1+r} \frac{\partial h_t}{\partial t}(x) \; dt \right]  \frac{dr}{r^{1+\alpha}} \\
		&= \frac{1}{\Gamma(-\alpha)} \int_{1}^\infty \frac{\partial h_t}{\partial t}(x) \left[ \int_{t-1}^{\infty}  \frac{dr}{r^{1+\alpha}} \right] \; dt \\
		&= - \frac{1}{ \Gamma(1-\alpha)} \int_{1}^\infty \frac{\partial h_t}{\partial t}(x) (t-1)^{-\alpha}  \; dt .
		\end{align*}
	This formal computation is justified as the double integral
	$$ \int_0^\infty  \int_{1}^{1+r} \left| \frac{\partial h_t}{\partial t}(x)\right| \; dt \;  \frac{dr}{r^{1+\alpha}} $$
	is finite as follows from the computation below. Thus, the case $m=1$ is established.
	
	Next, assume that  \hyperref[phi_properties:P2]{(P2)} holds for $0<\hbox{Re}(\alpha)<m$ for some $m\in\N$. We will show that it also holds when 
	$m$ is replaced by $m+1$ in the region $0<\hbox{Re}(\alpha)<m+1$. 
	Indeed, if $0<\Re(\alpha)<m$, it follows from \hyperref[phi_properties:P2]{(P2)} after integration by parts (and using the functional equation for the
	gamma function) that 
	$$ \phi_\alpha(x) =  (-1)^{m+1} \frac{1}{ \Gamma(m+1-\alpha)} \int_{1}^\infty \partial_t^{m+1} h_t(x) (t-1)^{m-\alpha}  \; dt .$$
(Note that the border terms vanish thanks to  Theorem \ref{theorem-DHZ}.)
 This formula is exactly the formula in \hyperref[phi_properties:P2]{(P2)} with $m$ replaced by $m+1$.             	
However, we observe that the integral on the right hand side converges and give an analytical function in the region $0<\Re(\alpha)<m+1$. 
Using \hyperref[phi_properties:P0]{(P0)} and the principle of analytic continuation, we deduce that the same formula holds in that region.
Thus, using the principle of mathematical induction, our claim is established for every $m \in \N$.

	In order to prove  \hyperref[phi_properties:P3]{(P3)}, we choose $m>\Re(\alpha)$ and recall that according to Theorem \ref{theorem-DHZ}
	$$ |\partial_{t}^m h_t(x)| \leq C \; t^{-1-Q/\nu-m} \; 
	\exp\left(-c |x|^{\frac{\nu}{\nu-1}}/ t^{1/(\nu-1)}   \right). $$
	Then by Lemma \ref{lemma-psi-bound} in the Appendix
	\begin{align*}
		|\phi_\alpha(x)| &\leq   \frac{C}{\Gamma(1-\alpha )} \psi_{\alpha-m,\beta,\nu,c}(r) \leq C \; (1+r)^{-\nu(\alpha+\beta-1)} 
		=  C \; (1+r)^{-Q-\nu \alpha}
	\end{align*}
	with $r=|x|$ and $\beta=1+Q/\nu+m$.
	
	In a similar way, to prove  \hyperref[phi_properties:P4]{(P4)} we observe that we can differentiate under the integral sign 
	\begin{align*}
		X^J \phi_\alpha(x) 
		&= (-1)^m \frac{1}{ \Gamma(m-\alpha)} \int_{1}^\infty X^J \partial^m_t h_t(x) (t-1)^{m-\alpha}  \; dt 
	\end{align*}
	and using Theorem \ref{theorem-DHZ} and Lemma \ref{lemma-psi-bound}, we get that 
	$$
	|X^J \phi_\alpha(x)| \leq \frac{C}{\Gamma(1-\alpha )} \psi_{\alpha-m,\widetilde{\beta},\nu,c}(r) \leq C \; (1+r)^{-\nu(\alpha+\widetilde{\beta}-1)} 
	=  C \; (1+r)^{-Q-\nu \alpha-|J|_\G}
	$$
	where $\widetilde{\beta}=1+Q/\nu+|J|_\G/\nu+m$.
	
Finally, property \hyperref[phi_properties:P5]{(P5)} follows from the symmetry of the heat kernel  (see Theorem \ref{ht_properties}).
\end{proof}

Using this lemma, we have that, for $\phi_\alpha=\mathcal{R}^\alpha(h_1)$, condition (i) in Theorem \ref{thm-KV} is exactly (P3),  and condition (ii) in  Theorem \ref{thm-KV} follows from (P4) and Theorem \ref{Lp_mean_value}. From (P1) and Lemma \ref{usando_spectral_th} we have condition (iii) with $r=2$ in Theorem \ref{thm-KV}. Therefore, Theorem \ref{g_equiv_norm} holds for  $1<p<2$.
Since the operator $A$ defined in \eqref{operator_A} is self-adjoint (by (P1) and (P5) of Lemma \ref{phi_properties}), we obtain that for every $1<p<\infty$ 
$$ \| g_\alpha(f) \|_{L^p(\G)} \leq c_p \; \| f \|_{L^p(\G)} $$
for some constant $c_p$ depending on $p$ and the bounding constants of $\phi_\alpha$.

Finally, using a well-known duality argument \cite[Chapter 6]{Journe} we get the reverse 
inequality
$$ \| f \|_{L^p(\G)} \leq C \; \| g_\alpha(f) \|_{L^p(\G)}. $$

\begin{remark}
Notice that $ \int_{\G} h_t(x) \; dx =1 $
which implies  
$ \int_{\G} \frac{\partial h_t}{\partial t}(x) = 0 $, and  from where it can be deduced $$\int_\G \phi_\alpha(x) dx=0.$$
However, $\phi_\alpha$ does not satisfy $$ \int_{a<|x|<b} \phi_\alpha(x) \; dx = 0  \qquad \forall \; 0<a<b $$ 
which is assumed as an hypothesis in \cite[Theorem 5.1]{Korangi-Vagi} (see Appendix \ref{appendix-A} for an explicit computation of $\phi_\alpha$ in the Euclidean case). Due to this fact, we needed a statement such as Theorem \ref{thm-KV}, which is not a direct consequence of the results in the article \cite{Korangi-Vagi}. 
\end{remark}

\subsection{Application to Sobolev spaces}

We will prove Theorem \ref{G_s} by using the Littlewood-Paley-Stein $g$--function.

\begin{proof} (Theorem \ref{G_s})

	We recall that $L^p_s(\G)= \hbox{Dom}\left((\mathcal{R}_p)^{s/\nu}\right)$ and by \eqref{Theorem 4.3 FR-book}
	$$ \| f \|_{L^p_s(\G)} \approx \| f \|_{L^p(\G)} + \| (\mathcal{R}_p)^{s/\nu} f \|_{L^p(\G)}. $$
	From Theorem \ref{g_equiv_norm}, 
	$$ \| (\mathcal{R}_p)^{s/\nu} f \|_{L^p(\G)} \approx \| g_\alpha((\mathcal{R}_p)^{s/\nu} f) \|_{L^p(\G)}.$$    
	If we choose $\alpha=1-s/\nu>0 $,
	\begin{align*}
		g_\alpha((\mathcal{R}_p)^{s/\nu} f )
		&=  \left( \int_0^\infty |(t\mathcal{R}_p)^\alpha T_t (\mathcal{R}_p)^{s/\nu} f|^2 \frac{dt}{t}  \right)^{1/2}  \\
		&=  \left( \int_0^\infty t^{2\alpha} |\mathcal{R}_p^{\alpha+s/\nu}  T_t f|^2 \frac{dt}{t}  \right)^{1/2}  \\
		&=  \left( \int_0^\infty t^{2\alpha} |\mathcal{R}_p  T_t f|^2 \frac{dt}{t}  \right)^{1/2}  \\
		&=  \left( \int_0^\infty t^{2\alpha} \left|\frac{\partial T_t f}{\partial t}\right|^2 \frac{dt}{t}  \right)^{1/2} 
		= G_s(f) 
	\end{align*}
	since $T_t f$ satisfies the heat equation.
\end{proof}

\begin{remark}
As said in the introduction, many similar characterizations of the potential spaces using square functions are known in different contexts. For instance, in the Euclidean case a similar characterization of the potential spaces using the Poisson semigroup is given in \cite[Theorem 5]{SegoviaWheeden}. Extensions to other operators using semigroups associated with different operators are discussed in \cite{BFRM} and \cite{BFTRMTT}.
\end{remark}

\section{Strichatz characterization by differences}\label{sec_strichatz}

In this section, we prove Theorem \ref{characterization Strichartz}. This follows by almost the same arguments as in \cite{CRTN}. Since in that article the authors deal with first order differences we only write the proof of (ii). Indeed,  the proof of (i) follows with minor modifications by using the known Theorem \ref{Teorema_MeanValue} instead of Theorem \ref{MeanValue_primero} (second-order mean value inequality). 

As a consequence of Theorem \ref{characterization Strichartz} we 
obtain Corollary \ref{indepencdencia_Rockland}. This follows directly since the generalization on graded groups of the Strichartz's functionals do not depend on Rockland operators. 
This has been proved in  \cite[Theorem 4.20]{FR-Sobolev} from a different approach.

Let $\G$ be a graded group and $\mathcal{R}_p$ be a positive self-adjoint Rockland operator acting on $L^p(\G)$ of homogeneous degree $\nu$.  
Consider $S_\s^{(2)}$ given by \eqref{S_\s^{(2)}}.
We recall 
that by \eqref{Theorem 4.3 FR-book}
	
	$$ \| f \|_{L^p_s(\G)} \approx \| f \|_{L^p(\G)} + \| (\mathcal{R}_p)^{s/\nu} f \|_{L^p(\G)}. $$
 Therefore, in order to prove Theorem \ref{characterization Strichartz} we only need to verify 
$$ \| (\mathcal{R}_p)^{\s/\nu} f \|_{L^p(\G)} \approx \| S^{(2)}_\s f \|_{L^p(\G)} \qquad \forall f\in L^p_\s(\G).$$

\begin{remark} We notice that the conditions of Strichartz's characterization are optimal. Indeed,
we will give very simple counterexamples for $\G=\R$ showing that \eqref{scrtich_estim_1} (resp. \eqref{scrtich_estim_2}) does not hold if $s>1$ (resp. $s>2$).
Let $\varphi\in C^\infty(\R)$ an even function of compact support such that $\varphi(x)=1$
 for every $x\in [-1,1]$ and  $\varphi(x)=0$ for all $x\not \in (-2,2)$. 
Consider  
$$ f_1(x) := x \varphi(x) \qquad \text{ and } \qquad f_2(x): = x^2 \varphi(x).$$
Then, for each $x \in (0,1)$, let $\delta = \min(x,1-x)$, and so if  $|y|<\delta$ we have 
$$f_1(x+y)-f_1(x) = y\quad \text{and} \quad \Delta_y^2 f_2(x) = f_2(x+y)+f_2(x-y)-2f_2(x) = 2y^2  $$
Hence, if $s \geq 1$,
\begin{align}
\left(S_s f_1(x)\right)^2 &=  \int_{0}^{\infty} \left[ \int_{|y|< 1} |f_1(x+ry) - f_1(x) | \,  dy \right]^2  \frac{dr}{r^{1+2s}} \label{S_1_contra}  \\ 
 &\gtrsim  \, \int_{0}^{\delta} r^{1-2s} \,  dr=+\infty  \notag 
\end{align}
and similarly if $s\geq 2$,
\begin{align}
\left(S_s^{(2)} f_2(x)\right) &=  \int_{0}^{\infty} \left[ \int_{|y|<1} |\Delta_y^2f_2(x)| \; dy \right]^2 \; \frac{dr}{r^{1+2s}}  
\gtrsim  \int_{0}^{\delta} r^{3-2s}  \, dr=+\infty.   \label{S_2_contra}
\end{align}
Since \eqref{S_1_contra} and \eqref{S_2_contra} hold for every $x\in (0,1)$, $S_s f_1 \not \in L^p(\R)$ for every $s\geq 1$ and  $S_s f_2 \not \in L^p(\R)$ for all $s\geq 2$. However, both $f_1$ and $f_2$  belong to every potential space $L^p_\s(\R)$ since  $f_1, f_2\in C^\infty(\R)$ and have compact support.
\end{remark}

\subsection{Estimate in one direction}

\begin{proposition}\label{direccion_facil}
Let $\G$ be a graded Lie group and $\mathcal{R}_p$ be a positive self-adjoint Rockland operator acting on $L^p(\G)$ of homogeneous degree $\nu$.  For every $1<p<\infty$, and  $s>0$
$$ \| (\mathcal{R}_p)^{\s/\nu} f \|_{L^p(\G)} \lesssim \| S^{(2)}_\s f \|_{L^p(\G)} \qquad \forall f\in L^p_\s(\G).$$
\end{proposition}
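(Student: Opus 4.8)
The plan is to avoid a direct pointwise comparison of $\int_\G|k_\alpha(y)|\,|\Delta^2_yf(x)|\,dy$ with $S^{(2)}_\s f(x)$ (which cannot work: it would amount to dominating an $L^1(\tfrac{dr}{r})$ integral by an $L^2(\tfrac{dr}{r})$ norm of the same family of ball averages) and instead to route the estimate through the heat semigroup square function and the $g$--function theory already developed. Fix $\alpha=1-\s/\nu$, which is positive in the relevant range $0<\s<2\le\nu$. By Theorem \ref{g_equiv_norm} applied to $h=(\mathcal{R}_p)^{\s/\nu}f\in L^p(\G)$ we have $\|(\mathcal{R}_p)^{\s/\nu}f\|_{L^p(\G)}\approx\|g_\alpha((\mathcal{R}_p)^{\s/\nu}f)\|_{L^p(\G)}$, and the computation carried out in the proof of Theorem \ref{G_s} shows $g_\alpha((\mathcal{R}_p)^{\s/\nu}f)=G_\s(f)$ pointwise. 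Hence it suffices to prove the pointwise bound
\begin{equation*}
 G_\s f(x)\lesssim S^{(2)}_\s f(x)\qquad\text{for a.e. }x\in\G,
\end{equation*}
after which taking $L^p$ norms yields $\|(\mathcal{R}_p)^{\s/\nu}f\|_{L^p(\G)}\lesssim\|G_\s f\|_{L^p(\G)}\lesssim\|S^{(2)}_\s f\|_{L^p(\G)}$ with no leftover $\|f\|_{L^p(\G)}$ term (which is why I prefer this route over comparing $L^p_\s$ norms).

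To get the pointwise bound I would first rewrite the time derivative of the semigroup in terms of the second order difference $\Delta^2_y$. Using $\int_\G h_t=1$ and the symmetry $h_t(y)=h_t(y^{-1})$ from Theorem \ref{ht_properties}, differentiating $T_tf(x)=\int_\G f(x\cdot y)h_t(y)\,dy$ in $t$ and exploiting $\int_\G\partial_t h_t=0$ gives
\begin{equation*}
 \partial_t T_t f(x)=\tfrac12\int_\G \Delta^2_y f(x)\,\partial_t h_t(y)\,dy .
\end{equation*}
Then I would insert the Gaussian estimate $|\partial_t h_t(y)|\lesssim t^{-1-Q/\nu}\exp\!\big(-c(|y|^\nu/t)^{1/(\nu-1)}\big)$ from Theorem \ref{theorem-DHZ} and set $t=r^\nu$, so that $|\partial_t T_t f(x)|\lesssim r^{-\nu}\widetilde A_r(x)$, where $\widetilde A_r(x):=r^{-Q}\int_\G|\Delta^2_y f(x)|\,e^{-c(|y|/r)^{\nu/(\nu-1)}}\,dy$ is a smooth average at scale $r$. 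The role of the exponential tail is that it lets me dominate $\widetilde A_r$ by a rapidly convergent superposition of the sharp ball averages $A_\rho(x):=\rho^{-Q}\int_{|y|\le\rho}|\Delta^2_y f(x)|\,dy$ appearing in $S^{(2)}_\s$: decomposing the integral over the dyadic annuli $\{2^kr\le|y|<2^{k+1}r\}$ produces
\begin{equation*}
 \widetilde A_r(x)\;\lesssim\;\sum_{k\ge -1} d_k\,A_{2^{k+1}r}(x),\qquad d_k=2^{(k+1)Q}\exp\!\big(-c\,2^{k\nu/(\nu-1)}\big),
\end{equation*}
whose coefficients $d_k$ decay super-exponentially.

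With these preparations the conclusion is a one-dimensional (logarithmic) convolution estimate. Changing variables $t=r^\nu$ in the definition of $G_\s$ gives $G_\s f(x)^2\lesssim\int_0^\infty r^{-2\s}\widetilde A_r(x)^2\,\tfrac{dr}{r}$, i.e. $G_\s f(x)\lesssim\|\widetilde A_\cdot(x)\|_{L^2(r^{-2\s}dr/r)}$, whereas by definition $S^{(2)}_\s f(x)=\|A_\cdot(x)\|_{L^2(r^{-2\s}dr/r)}$. Applying Minkowski's inequality in $L^2\big((0,\infty),r^{-2\s}\,\tfrac{dr}{r}\big)$ to the displayed bound for $\widetilde A_r$, together with the scaling identity $\|A_{2^{k+1}\cdot}(x)\|_{L^2(r^{-2\s}dr/r)}=2^{\s(k+1)}S^{(2)}_\s f(x)$ (obtained from the substitution $\rho=2^{k+1}r$), gives
\begin{equation*}
 G_\s f(x)\;\lesssim\;\Big(\sum_{k\ge-1} d_k\,2^{\s(k+1)}\Big)\,S^{(2)}_\s f(x),
\end{equation*}
and the series converges because of the super-exponential decay of $d_k$. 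This is the required pointwise estimate. I expect the main obstacle to be precisely this passage from the smooth, Gaussian-tailed scale of the heat semigroup to the sharp ball averages defining $S^{(2)}_\s$; once the annular decomposition and the super-exponential heat kernel decay of Theorem \ref{theorem-DHZ} are in place, the remaining steps are the elementary semigroup identity and the convolution bound above.
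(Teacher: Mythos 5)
Your proof is correct and follows essentially the same route as the paper's: reduction to the pointwise bound $G_\s f(x) \lesssim S^{(2)}_\s f(x)$ via the square-function characterization, the identity $2\,\partial_t T_t f(x) = \int_\G \partial_t h_t(y)\,\Delta^2_y f(x)\,dy$, the Gaussian bounds of Theorem \ref{theorem-DHZ}, and a dyadic-annulus decomposition whose super-exponentially decaying coefficients beat the geometric factor $2^{\s k}$. The only cosmetic difference is that you sum the scales using Minkowski's inequality in $L^2\bigl((0,\infty), r^{-2\s}\,dr/r\bigr)$ together with the dilation identity for the norm, whereas the paper applies a weighted Cauchy--Schwarz inequality to the dyadic sum and then integrates term by term in $t$.
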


\begin{proof} 
From Theorem \ref{G_s} we know that
$$\|G_{\s}(f)\|_{L^p(\G)}\approx \|(\mathcal{R}_p)^{\s/\nu}f\|_{L^2(\G)}.$$
Therefore, only need to prove that for $x \in \G$,
$$ \int_0^\infty t^{1-\alpha}  \left| \frac{\partial T_t f}{\partial t}(x) \right|^2 \; dt \lesssim  S_\s^{(2)} f(x) $$
with $\alpha=\frac{2\s}{\nu}$.

From the properties of the heat kernel and the fact that $\G$ is unimodular, we have
\begin{equation*}
    \begin{cases}
    \int_{\G} \frac{\partial h_t}{\partial t}(-y) \; dy = 0\\
 \quad \\
   \frac{\partial T_t f}{\partial t} (x)
= \int_{\G} \frac{\partial h_t}{\partial t} (y) f(x\cdot y) \; dy \\ 
\quad\\
    \frac{\partial T_t f}{\partial t} (x)
= \int_{\G} \frac{\partial h_t}{\partial t} (y) f(x\cdot y^{-1}) \; dy
     \end{cases}
\end{equation*}
and then we can write
$$ 2 \, \frac{\partial T_t f}{\partial t} (x)
= \int_{\G} \frac{\partial h_t}{\partial t} (y) \left[ f(x\cdot y) +  f(x\cdot y^{-1})-2 f(x) \right] \; dy .$$
We also know from Theorem \ref{theorem-DHZ} that there exist constants $C,c>0$ such that for each $x\in\G$, we have the estimate 
$$ |\partial_{t} h_t(x)| \leq \frac{C}{t^{1-Q/\nu}} \; 
\exp\left(-c \left(\frac{|x|^{\nu}}{ t}\right)^{\frac{1}{\nu-1}}   \right). $$
Hence,
\begin{align*}
2\, \left| \frac{\partial T_t f}{\partial t} (x) \right|
&\leq  \frac{C}{t^{1+Q/\nu}} \int_{\G}  \exp \left( - c \left(\frac{|y|^\nu}{t}\right)^{\frac{1}{\nu-1}} \right)  \left| \Delta_y^2f(x) \right| \; dy \\
&\leq  \frac{C}{t^{1+Q/\nu}} \left( A + \sum_{k=0}^\infty B_k \right) 
\end{align*}
where 
$$
A = \int_{|y|<t^{1/\nu}}  \exp \left( - c \left(\frac{|y|^\nu}{t}\right)^{\frac{1}{\nu-1}} \right)  \left| \Delta_y^2f(x) \right| \; dy ;
$$
$$
B_k = \int_{2^k t^{1/\nu} \leq |y|< 2^{k+1} t^{1/\nu}}  \exp \left( - c \left(\frac{|y|^\nu}{t}\right)^{\frac{1}{\nu-1}} \right)  \left| \Delta_y^2f(x) \right| \; dy .
$$
Using Cauchy-Schwarz inequality,
\begin{align*}
\, \left| \frac{\partial T_t f}{\partial t} (x) \right|
\leq  \frac{C}{t^{1+Q/\nu}} \left( A^2 + \sum_{k=0}^\infty B_k^2 \right)^{1/2}.
\end{align*}
It follows that
\begin{align*}
\left| \frac{\partial T_t f}{\partial t} (x) \right|^2
\lesssim  \frac{1}{ t^{2(1+Q/\nu)}} \left( \phi(t^{1/\nu}) + \sum_{k=0}^\infty  e^{-c2^{\frac{\nu}{\nu-1} k}} \phi(2^{k+1} t^{1/\nu} ) \right)
\end{align*}
with
$$ 
\phi(r)= \left( \int_{|y|<r}   \left| \Delta_y^2f(x) \right| \; dy \right)^{2}  
$$
Hence
$$ \int_0^\infty t^{1-\alpha}  \left| \frac{\partial T_t f}{\partial t}(x) \right|^2 \; dt \lesssim
 \int_0^\infty \frac{1}{t^{1+\alpha+2Q/\nu}} \left( \phi(t^{1/\nu}) + \sum_{k=0}^\infty  e^{-c2^{\frac{\nu}{\nu-1}  k}} \phi(2^{k+1} t^{1/\nu} ) \right) \; dt.
$$
For $k=-1,0,1,2,\dots$, we consider 
\begin{align*}
  I_k&=\int_0^\infty t^{-1-\alpha-2Q/\nu}\,  \phi(2^{k+1}t^{1/\nu} ) \, dt\\  
    &=\nu\, 2^{(k+1)(\nu\alpha+2Q)}\int_0^\infty r^{-1-\nu\alpha-2Q} \, \phi(r) \, dr.
\end{align*}
As a consequence,
\begin{align*}
  \int_0^\infty t^{1-\alpha}  \left| \frac{\partial T_t f}{\partial t}(x) \right|^2 \; dt&\lesssim 
  \left[1+\sum_{k=0}^\infty  e^{-c2^{\frac{\nu}{\nu-1}  k}} \, 2^{(k+1)(2Q+\nu\alpha+\nu)}\right]\\
  &\qquad \times \int_0^\infty\left[\frac{1}{r^{\alpha\frac{\nu}{2} + Q}}\int_{|y|<r}   \left| \Delta_y^2f(x)\right| \; dy\right]^2 \, \frac{dr}{r}.
\end{align*}
Therefore, since $\alpha=\frac{2\s}{\nu}$
we obtain
$$ \int_0^\infty t^{1-\frac{2\s}{\nu}}  \left| \frac{\partial T_t f}{\partial t}(x) \right|^2 \; dt\lesssim S_\s^{(2)}f(x).$$
\end{proof}

\subsection{Estimate on the other direction}

\begin{proposition}
Let $\G$ be a graded Lie group and $\mathcal{R}_p$ be a positive self-adjoint Rockland operator acting on $L^p(\G)$ of homogeneous degree $\nu$.  For every $1<p<\infty$, 
 if $\s\in (0,2)$, the following estimate holds 
$$ \| S^{(2)}_\s f \|_{L^p(\G)} \lesssim \| (\mathcal{R}_p)^{\s/\nu} f \|_{L^p(\G)} \qquad \forall f\in L^p_\s(\G).$$
\end{proposition}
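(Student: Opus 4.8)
The plan is to reduce, via the norm equivalence \eqref{Theorem 4.3 FR-book}, to the single inequality stated above, fix $f\in L^p_\s(\G)$ and set $g=\mathcal{R}_p^{\s/\nu}f\in L^p(\G)$. First I would reconstruct $f$ from $g$ through the subordination formula for negative powers,
\[f=\mathcal{R}_p^{-\s/\nu}g=\frac{1}{\Gamma(\s/\nu)}\int_0^\infty \tau^{\s/\nu}\,T_\tau g\,\frac{d\tau}{\tau},\]
valid on a dense class (with the usual care about convergence as $\tau\to\infty$), which realizes $f$ as the convolution of $g$ against the Riesz-type kernel $k_{-\s/\nu}$ of Lemma \ref{lemma-k-alpha}. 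The decisive structural observation is to read $S_\s^{(2)}$ as the norm of a \emph{linear} vector-valued operator: setting $E=L^2\!\left((0,\infty),\tfrac{dr}{r};L^1(\{|y|\le r\},dy)\right)$ and
\[Ff(x)(r,y)=\frac{\chi_{\{|y|\le r\}}}{r^{\s+Q}}\,\Delta_y^2f(x),\]
the absolute values in \eqref{S_\s^{(2)}} are absorbed into the norm of $E$, so that $S_\s^{(2)}f(x)=\|Ff(x)\|_E$ while $f\mapsto Ff$ is linear. Substituting $f=g*k_{-\s/\nu}$ turns $F\circ\mathcal{R}_p^{-\s/\nu}$ into an $E$-valued convolution operator $g\mapsto g*\kappa$, whose kernel is $\kappa(x)(r,y)=\tfrac{\chi_{\{|y|\le r\}}}{r^{\s+Q}}\,\Delta_y^2 k_{-\s/\nu}(x)$.

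The analytic heart of the argument is a pointwise bound for the second-order difference of the heat kernel. Using the scaling property of Theorem \ref{ht_properties}(2) and that $D_{\tau^{-1/\nu}}$ is an automorphism, one writes $\Delta_y^2 h_\tau(u)=\tau^{-Q/\nu}\,\Delta_{\tilde y}^2 h_1(\tilde u)$ with $\tilde u=D_{\tau^{-1/\nu}}u$ and $|\tilde y|=\tau^{-1/\nu}|y|$; applying the second-order mean value inequality (Theorem \ref{MeanValue_primero}) to the Schwartz function $h_1$ and using its rapid decay then yields, for every $N$,
\[|\Delta_y^2 h_\tau(u)|\lesssim \min\!\Big(1,\tfrac{|y|^2}{\tau^{2/\nu}}\Big)\,\tau^{-Q/\nu}\big(1+\tau^{-1/\nu}|u|\big)^{-N}.\]
The gain factor $\min(1,(|y|/\tau^{1/\nu})^2)$ is integrable against $\tau^{\s/\nu}\tfrac{d\tau}{\tau}$ precisely when $0<\s<2$, and it is exactly this second-order gain, unavailable with the first-order differences of \cite{CRTN}, that opens the range up to $\s=2$. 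Through $\Delta_y^2 k_{-\s/\nu}=\tfrac{1}{\Gamma(\s/\nu)}\int_0^\infty \tau^{\s/\nu}\Delta_y^2 h_\tau\,\tfrac{d\tau}{\tau}$ this estimate controls the kernel $\kappa$ and governs the convergence of all the auxiliary $\tau$- and $r$-integrals.

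With these ingredients I would conclude by invoking the vector-valued Calderón–Zygmund theory of Proposition \ref{prop_KV} with the Banach space $E$. Two hypotheses have to be verified: a weak $(2,2)$ bound $\|S_\s^{(2)}f\|_{L^2(\G)}\lesssim\|g\|_{L^2(\G)}$, which follows from spectral theory as in the computation behind Theorem \ref{g_equiv_norm}; and the Hörmander-type condition $\int_{|x|>\rho|h|}\|\kappa(h^{-1}\cdot x)-\kappa(x)\|_E\,dx\le C$ for the $E$-valued kernel $\kappa$, obtained by feeding the heat-kernel difference estimate above into the $E$-norm after integrating in $\tau$. Once both hold, Proposition \ref{prop_KV} gives $\|S_\s^{(2)}f\|_{L^p(\G)}=\|g*\kappa\|_{L^p(\G;E)}\lesssim\|g\|_{L^p(\G)}=\|\mathcal{R}_p^{\s/\nu}f\|_{L^p(\G)}$ for all $1<p<\infty$.

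The hard part will be the Hörmander verification while \emph{keeping} the orthogonality encoded by $E$. Any attempt to shortcut it by the scalar estimate $\int_{|y|\le r}|\Delta_y^2T_\tau g(x)|\,dy\lesssim\|K_{r,\tau}\|_{L^1}\,Mg(x)$ (with $M$ the Hardy–Littlewood maximal operator) is borderline divergent: carrying out the $\tau$-integral gives only $r^{-(\s+Q)}\int_{|y|\le r}|\Delta_y^2 f(x)|\,dy\lesssim Mg(x)$ with no decay in $r$, whose $L^2(\tfrac{dr}{r})$-norm is infinite. Thus the square structure cannot be collapsed to a maximal function, and the whole difficulty is to show that, after the inner $L^1$-average over $|y|\le r$ and the outer $L^2(\tfrac{dr}{r})$-norm, the smoothness and decay of $\kappa$ persist uniformly so that the $E$-valued Hörmander integral is finite.
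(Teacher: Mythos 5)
Your route --- realizing $S^{(2)}_\s\circ\mathcal{R}_p^{-\s/\nu}$ as a single $E$-valued convolution operator and invoking the vector-valued Calder\'on--Zygmund theory of Proposition \ref{prop_KV} --- is genuinely different from the paper's, which never inverts $\mathcal{R}_p^{\s/\nu}$. The paper instead decomposes $f=\sum_m f_m$ with $f_m=-\int_{2^m}^{2^{m+1}}\partial_t T_t f\,dt$, uses Theorem \ref{theorem-DHZ} to prove $|\mathrm{X}^\beta f_m(y)|\lesssim 2^{-m|\beta|/\nu}\,\mathcal{M}(g_m)(x)$ for $y$ near $x$, applies the second-order mean value inequality (Theorem \ref{MeanValue_primero}) scale by scale, sums with Cauchy--Schwarz (this is precisely where $\s<2$ enters), and concludes with the Fefferman--Stein maximal inequality together with the already-proved equivalence $\|G_\s(f)\|_{L^p(\G)}\approx\|\mathcal{R}_p^{\s/\nu}f\|_{L^p(\G)}$ of Theorem \ref{G_s}; Calder\'on--Zygmund theory is only ever used behind Theorem \ref{g_equiv_norm}, where the kernel is the tractable spectral object $\phi_\alpha$, not the much rougher $\kappa$ you introduce.

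As written, however, your proposal has genuine gaps, and its central estimate is false. (1) The claimed bound $|\Delta_y^2 h_\tau(u)|\lesssim\min\bigl(1,|y|^2\tau^{-2/\nu}\bigr)\tau^{-Q/\nu}\bigl(1+\tau^{-1/\nu}|u|\bigr)^{-N}$ fails when $|y|\gtrsim\tau^{1/\nu}$: take $\tau=1$ and $y=u^{-1}$, so that $\Delta_y^2h_1(u)=h_1(0)+h_1(u\cdot u)-2h_1(u)\to h_1(0)=\|h_{1/2}\|_{L^2(\G)}^2>0$ as $|u|\to\infty$, while your right-hand side tends to $0$. The $|y|^2$ gain from Theorem \ref{MeanValue_primero} is compatible with uniform decay in $u$ only for $|y|\lesssim\tau^{1/\nu}$ (the supremum there runs over $|z|\leq \eta^2|\tilde y|$, which destroys the decay when $|\tilde y|$ is large); beyond that scale one must carry the translated bumps $h_\tau(u\cdot y^{\pm 1})$ separately, and this off-diagonal mass is exactly what makes your H\"ormander integral delicate. (2) The weak $(2,2)$ input does not ``follow from spectral theory'': because of the absolute values inside $\int_{|y|\leq r}|\Delta_y^2 f|\,dy$, the operator $S^{(2)}_\s$ is not of the form $f\mapsto\|(\psi_t(\mathcal{R})f)_{t>0}\|_{L^2(dt/t)}$, and on a noncommutative group the right translations building $\Delta_y^2$ do not commute with the left-invariant operator $\mathcal{R}$, so the computation of Lemma \ref{usando_spectral_th} has no analogue; what you need at $p=2$ is essentially the proposition itself. (3) By your own admission, the H\"ormander condition for $\kappa$ --- the entire content of a Calder\'on--Zygmund proof --- is left unverified. (4) A further obstruction to the setup: Lemma \ref{lemma-k-alpha} only defines $k_{-\s/\nu}$ when $\s<Q$, so on $\G=\R$ (where $Q=1$) the representation $f=g*k_{-\s/\nu}$ is unavailable for $\s\in[1,2)$, a range the theorem does cover; and even when $\s<Q$, convolution of $k_{-\s/\nu}$ against a general $g\in L^p(\G)$ need not converge absolutely, so the identity $\Delta_y^2 f=g*\Delta_y^2 k_{-\s/\nu}$ itself requires a limiting argument. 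In sum, this is a plausible program, but its decisive steps are either false as stated or missing.
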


\begin{proof}
Following \cite{CRTN}, we perform a Littlewood-Paley decomposition of $f$
\begin{align*}
f &= -\int_0^\infty \frac{\partial T_t f}{\partial t} \; dt = \sum_{m=-\infty}^\infty f_m  
\end{align*}
where
\begin{align*}
f_m =  -\int_{2^m}^{2^{m+1}} \frac{\partial T_t f}{\partial t} \; dt \qquad \text{ and } \qquad  g_m =  \int_{2^{m-1}}^{2^{m}} \left|\frac{\partial T_t f}{\partial t}\right| \; dt ,
\end{align*}  
so that for every $x\in \G$ and $m \in \Z$, we have
\begin{equation}\label{eq(8)}
   |f_m(x)| \leq g_{m+1}(x). 
\end{equation}
Since $T_t$ is a semigroup $T_{2t}= T_t T_t$, therefore
\begin{align*}
f_m &= -2 \int_{2^{m-1}}^{2^{m}} \frac{\partial T_{2t} f}{\partial t} \; dt 
= -2 \int_{2^{m-1}}^{2^{m}} T_t  \frac{\partial T_{t} f}{\partial t} \; dt. 
\end{align*}  
We deduce that
\begin{align*}
f_m(y) &= -2 \int_{2^{m-1}}^{2^{m}} \int_{\G} h_t(z^{-1}\cdot y)   \frac{\partial T_{t} f(z)}{\partial t} \; dz \; dt
\end{align*}
and hence, for any multi-index $\beta\in\N_0^n$,
\begin{align*}
X^\beta f_m(y) &= -2 \int_{2^{m-1}}^{2^{m}} \int_{\G} X^\beta h_t(z^{-1}\cdot y)   \frac{\partial T_{t} f(z)}{\partial t} \; dz \; dt.
\end{align*}

Fixed $x\in\G$ and $m\in\Z$, let $y\in\G$ such that $|y^{-1}\cdot x|^\nu\leq 2^{m-1}$. Also, let $t\in[2^{m-1},2^{m}]$. Then, there exists constants $a,b>0$ such that 
\begin{equation*}\label{cuenta_desig_triang}
  e^{-\left(\frac{|z^{-1}\cdot y|^\nu}{t}\right)^{\frac{1}{\nu-1}}}\leq b \,  e^{-a\left(\frac{|z^{-1}\cdot x|^\nu}{t}\right)^{\frac{1}{\nu-1}}}.  
\end{equation*}
Indeed, by triangle inequality and since $\nu>1$
\begin{equation*}
    \frac{|z^{-1}\cdot x|^\nu}{t}\leq \frac{(|y^{-1}\cdot x|+|z^{-1}\cdot y|)^\nu}{t}\leq 2^{\nu-1} \frac{2^{m-1}
    +|z^{-1}\cdot y|^\nu}{t}\leq  2^{\nu-1}\left(1+\frac{|z^{-1}\cdot y|^\nu}{t}\right).
\end{equation*}
Since $\nu\geq 2$, if $\nu=2$, the exponent $\frac{1}{\nu-1}$ is one, and we have 
$$e^{-\frac{|z^{-1}\cdot y|^2}{t}}\leq e^{-\left(\frac{1}{2}\frac{|z^{-1}\cdot x|^2}{t} - 1 \right)}
    =e \cdot e^{-\frac{1}{2}\frac{|z^{-1}\cdot x|^2}{t}}.$$
If $\nu>2$, then $\frac{1}{\nu-1}<1$ and 
\begin{align*}
    \frac{1}{2}\left(\frac{|z^{-1}\cdot x|^\nu}{t}\right)^{\frac{1}{\nu-1}}&\leq \left(1+\frac{|z^{-1}\cdot y|^\nu}{t}\right)^{\frac{1}{\nu-1}}
    \leq 1 + \left(\frac{|z^{-1}\cdot y|^\nu}{t}\right)^{\frac{1}{\nu-1}},
\end{align*}
 so
    $$e^{-\left(\frac{|z^{-1}\cdot y|^\nu}{t}\right)^{\frac{1}{\nu-1}}}\leq e^{-\frac{1}{2}\left(\frac{|z^{-1}\cdot x|^\nu}{t}\right)^{\frac{1}{\nu-1}} + 1 }
    =e \cdot  e^{-\frac{1}{2}\left(\frac{|z^{-1}\cdot x|^\nu}{t}\right)^{\frac{1}{\nu-1}}}$$
Thus, we obtain  \eqref{cuenta_desig_triang} with  constants $b=e$ and $a=\frac{1}{2}$.

Hence, using the estimate given in Theorem \ref{theorem-DHZ} we deduce that
\begin{align*}
|X^\beta &f_m(y)| \leq 2  \int_{2^{m-1}}^{2^{m}} \int_{\G} |X^\beta h_t(z^{-1}\cdot y)|   \left|\frac{\partial T_{t} f(z)}{\partial t}\right| \; dz \; dt \\
& \lesssim  \; \int_{2^{m-1}}^{2^{m}} \int_{\G}  t^{-|\beta|_\G/\nu-Q/\nu}  \exp\left(- c \; \left(\frac{|z^{-1}\cdot y|^\nu}{t}\right)^{\frac{1}{\nu-1}}\right) \left|\frac{\partial T_{t} f(z)}{\partial t}\right| \; dz \; dt \\
& \lesssim  \frac{1}{2^{m(|\beta|_\G/\nu+Q/\nu)} } \int_{2^{m-1}}^{2^{m}} \int_{\G}  \exp\left(- \frac{c}{2} \; \left(\frac{|z^{-1}\cdot x|^\nu}{t}\right)^{\frac{1}{\nu-1}}\right)   \left|\frac{\partial T_{t} f(z)}{\partial t}\right| \; dz \; dt \\
& \lesssim  \frac{1}{2^{m(|\beta|_\G/\nu+Q/\nu)}}  \int_{\G} g_m(z)  \exp\left(- \frac{c}{2} \; \left(\frac{|z^{-1}\cdot x|^\nu}{2^m}\right)^{\frac{1}{\nu-1}}\right) \; dz.
\end{align*}
So,
\begin{align*}
|X^\beta f_m(y)| \lesssim & \,   \frac{1}{2^{m|\beta|_\G/\nu}} \left[ \frac{1}{(2^{m/\nu})^Q} \int_{|z^{-1}\cdot x|^\nu \leq 2^m} g_m(z)  \exp\left(-\frac{c}{2} \; \left(\frac{|z^{-1}\cdot x|^\nu}{2^m}\right)^{\frac{1}{\nu-1}}\right) \; dz \right. \\
& +
\left. \sum_{k=0}^\infty  \frac{1}{(2^{m/\nu})^Q}\int_{2^{k+m}<|z^{-1}\cdot x|^\nu \leq 2^{k+m+1}} g_m(z)  \exp\left(- \frac{c}{2} \; \left(\frac{|z^{-1}\cdot x|^\nu}{2^m}\right)^{\frac{1}{\nu-1}}\right) \; dz \right].
\end{align*}
We deduce that,
\begin{align}\label{acot_maximal} 
|X^\beta f_m(y)| &\lesssim 
 \frac{1}{2^{m\beta|_\G/\nu}} \left[ 1+  \sum_{k=0}^{+\infty} 2^{k Q/\nu}  \exp\left(-\frac{c}{2}\, 2^{k/(\nu-1)}\right) \right] \mathcal{M}(g_m)(x) \notag \\ &\lesssim \frac{1}{2^{m|\beta|_\G/\nu}} \; \mathcal{M}(g_m)(x) 
\end{align}
for all $y$  in the ball $B(x,2^{(m-1)/\nu})$ where $\mathcal{M}$ denotes the Hardy-Littlewood maximal function
$$\mathcal{M}(g)(x)=\sup_r \frac{1}{r^Q}\int_{|y^{-1}\cdot x|\leq r}|g(y)|dy.$$

Now we compute
\begin{align*}
\left(S^{(2)}_\s f(x)\right)^2 &=  \int_{0}^{\infty} \left[ \frac{1}{r^{v + Q}} \int_{|y|\leq r} |\Delta^2_y f(x) | \; dy \right]^2 \; \frac{dr}{r}  \\
&= \sum_{j=-\infty}^\infty  \int_{2^j}^{2^{j+1}}  \left[ \frac{1}{r^{\s + Q}} \int_{|y|\leq r} |\Delta^2_y f(x) | \; dy \right]^2 \; \frac{dr}{r} \\
&\lesssim \sum_{j=-\infty}^\infty  \left[ \frac{1}{2^{j(\s+Q)}} \int_{|y|\leq 2^{j+1}} 
	|\Delta^2_y f(x) | \; dy \right]^2 .
\end{align*}
Using the decomposition of $f$ by means of the $f_m$'s one can write 
$$
\int_{|y|\leq 2^{j+1}} 
|\Delta^2_y f(x) | \; dy
\leq \sum_{m=-\infty}^\infty \int_{|y|\leq 2^{j+1}} 
|\Delta^2_y f_m(x) | \; dy.
$$
When $j+1\leq (m-1)/\nu$, and  $|y|\leq 2^{j+1}$, by Theorem \ref{MeanValue_primero} and \eqref{acot_maximal} we have
\begin{align*}
    	|\Delta^2_y f_m(x)| &\lesssim 
    	\sum_{|\beta|\leq 2, \,  |\beta|_\G>1}|y|^{|\beta|_\G}\sup_{|x^{-1} \cdot z|\leq \eta|y| }|X^\beta f_m(z)|\\
    	&\lesssim  \sum_{|\beta|\leq 2, \,  |\beta|_\G>1}2^{(j+1)|\beta|_\G}\sup_{|x^{-1}\cdot z|\leq \eta 2^{(m-1)/\nu} }|X^\beta f_m( z)|\\
    	&\lesssim \sum_{|\beta|\leq 2, \,  |\beta|_\G>1} 2^{(j+1)|\beta|_\G} \frac{1}{2^{m|\beta|_\G/\nu}} \mathcal{M}(g_m)(x)\\
    	&= \sum_{|\beta|\leq 2, \,  |\beta|_\G>1} 2^{|\beta|_\G(j+1-m/\nu)}  \mathcal{M}(g_m)(x)
\end{align*}
Notice that for a multi-index $\beta$ satisfying $|\beta|\leq 2$ and $|\beta|_\G>1$ we have $|\beta|_\G\geq 2$ since $\G$ is graded and so the dilation weights can be assumed as positive integers. Then, since 
since $j+1-m/\nu\leq -1/\nu$ and so negative, we have
\begin{equation*}
    |\Delta^2_y f_m(x)| \lesssim 2^{2(j+1-m/\nu)}  \mathcal{M}(g_m)(x)\lesssim 2^{2(j-m/\nu)}  \mathcal{M}(g_m)(x) 
\end{equation*}
Therefore, 
$$\frac{1}{2^{jQ}} \int_{|y|\leq 2^{j+1}} 
	|\Delta^2_y f_m(x) | \; dy \lesssim {2^{2j-2m/\nu}} \mathcal{M}(g_m)(x).$$
When $j+1> (m-1)/\nu$, we use \eqref{eq(8)} to write
\begin{align*}
  \frac{1}{2^{jQ}} \int_{|y|\leq 2^{j+1}} 
	|\Delta^2_y f_m(x) | \; dy &\leq \frac{2}{ 2^{jQ}} \int_{|y|\leq 2^{j+1}} 
	|f_m(x\cdot y) | \; dy+ \frac{2}{ 2^{jQ}} \int_{|y|\leq 2^{j+1}} 
	|f_m(x) | \; dy \\
	&\lesssim  \mathcal{M}(f_m)(x) \lesssim  \mathcal{M}(g_{m+1})(x).  
\end{align*}
Finally,
\begin{equation*}
\frac{1}{ 2^{jQ}} \int_{|y|\leq 2^{j+1}} 
	|\Delta^2_y f(x) | \; dy\lesssim \sum_{m=-\infty}^{\nu(j+1)} \mathcal{M}(g_{m+1})(x) \, + \,  \sum_{m=\nu(j+1)+1}^{\infty} 2^{2j-2m/\nu}\mathcal{M}(g_{m})(x).  
\end{equation*}
As a consequence,  denoting $c_m=\mathcal{M}(g_m)(x)$ we have 
\begin{equation*}
    \frac{1}{2^{j(\s+Q)} } \int_{|y|\leq 2^{j+1}} 
	|\Delta^2_y f(x) | \; dy\lesssim 2^{-j\s} \sum_{m=-\infty}^{\nu(j+1)} c_{m+1}  \, \, + \, 2^{-j\s}\sum_{m=\nu(j+1)}^{\infty} 2^{2j-2m/\nu}c_m
\end{equation*}
and so
\begin{align*}
    S^{(2)}_\s f(x)^2&\lesssim \sum_{j=-\infty}^\infty  \left[ \frac{1}{2^{j(\s+Q)}} \int_{|y|\leq 2^{j+1}} 
	|\Delta^2_y f(x) | \; dy \right]^2\\
	&\lesssim \sum_{j=-\infty}^\infty  2^{-2j\s}\left[\sum_{m=-\infty}^{\nu(j+1)} c_{m+1}\right]^2 \, \, + \,  \sum_{j=-\infty}^\infty 2^{-2j\s}\left[\sum_{m=\nu(j+1)+1}^{\infty} 2^{2j-2m/\nu}c_m\right]^2.
\end{align*}
Let
$$A=\sum_{j=-\infty}^\infty  2^{-2j\s}\left[\sum_{m=-\infty}^{\nu(j+1)} c_{m+1}\right]^2 $$
and
\begin{align*} 
B&=\sum_{j=-\infty}^\infty 2^{-2j\s}\left[\sum_{m=\nu(j+1)+1}^{\infty} 2^{2j-2m/\nu} \, c_m\right]^2\\
&=\sum_{j=-\infty}^\infty 2^{2j(2-\s)}\left[\sum_{m=\nu(j+1)+1}^{\infty} 2^{-2m/\nu} \, c_m\right]^2.
\end{align*}
Choosing $\varepsilon\in(0,\s/\nu)$ and using Cauchy-Schwarz inequality, one obtains
\begin{align*}
    A&=\sum_{j=-\infty}^\infty  2^{-2j\s}\left[\sum_{m=-\infty}^{\nu(j+1)} 2^{m\varepsilon} \, c_{m+1} \,  2^{-m\varepsilon}\right]^2\\& \lesssim \sum_{j=-\infty}^\infty  2^{-2j\s} \left[\sum_{m=-\infty}^{\nu(j+1)} 2^{2m\varepsilon}\right] \left[\sum_{m=-\infty}^{\nu(j+1)} 2^{-2m\varepsilon} \, c_{m+1}^2\right]\\
   & \lesssim \sum_{j=-\infty}^\infty  2^{2j(\nu\varepsilon-\s)}\left[\sum_{m=-\infty}^{\nu(j+1)} 2^{-2m\varepsilon} \, c_{m+1}^2\right]\\
    &= \sum_{m=-\infty}^\infty 2^{-2m\varepsilon} \, c_{m+1}^2 \sum_{j=(n/\nu)-1}^{\infty} 2^{2j(\nu\varepsilon-\s)}\\
    &\lesssim \sum_{m=-\infty}^\infty 2^{-\frac{2m\s}{\nu}} \, c_{m}^2.
\end{align*}
Similarly, considering $\varepsilon\in(0,1-\s/2)$ -that is why we are asking $\s\in(0,2)$- and using Cauchy-Schwarz inequality, yields 
\begin{align*}
    B&=\sum_{j=-\infty}^\infty 2^{2j(2-\s)}\left[\sum_{m=\nu(j+1)+1}^{\infty} 2^{-(2m/\nu)(1-\s/2-\varepsilon)} \, c_m \, 2^{-(2m/\nu)(\s/2+\varepsilon)}\right]^2\\
    &\lesssim \sum_{j=-\infty}^\infty 2^{2j(2-\s)} \left[\sum_{m=\nu(j+1)+1}^{\infty} 2^{-\frac{4m}{\nu}(1-\s/2-\varepsilon)}\right] \left[\sum_{m=\nu(j+1)+1}^{\infty} 2^{-\frac{4m}{\nu}(\s/2+\varepsilon) \ } \, c_m^2\right]\\
    &\lesssim \sum_{j=-\infty}^\infty 2^{4j\varepsilon} \left[\sum_{m=\nu(j+1)+1}^{\infty} 2^{-\frac{4m}{\nu}(\s/2+\varepsilon)} \, c_m^2\right]\\
    &= \sum_{m=-\infty}^\infty 2^{-\frac{4m}{\nu}(\s/2+\varepsilon)} \, c_m^2 \left[ \sum_{j=-\infty}^{\frac{m-1}{\nu}-1}2^{4j\varepsilon}\right]\\
    &\lesssim\sum_{m=-\infty}^\infty 2^{-\frac{2m}{\nu}\s} \, c_m^2.
\end{align*}
Consequently,
\begin{equation*}
    S_\s^{(2)}f(x)^2\lesssim \sum_{m=-\infty}^\infty 2^{-\frac{2m}{\nu}\s}(\mathcal{M}(g_m)(x))^2
\end{equation*}
 and the proof ends after applying Fefferman-Stein theorem (which holds for spaces of homogeneous type, in particular homogeneous Lie groups) 
 \begin{align*}
     \|S_\s^{(2)}f(x)\|_{L^p(\G)}&\lesssim \left\|\left[\sum_{m=-\infty}^\infty 2^{-\frac{2m}{\nu}\s} \, (\mathcal{M}g_m)^2\right]^{1/2}\right\|_{L^p(\G)}\\
     &\lesssim \left\|\left[\sum_{m=-\infty}^\infty 2^{-\frac{2m}{\nu}\s} \, g_m^2\right]^{1/2}\right\|_{L^p(\G)}.
 \end{align*} 
 Using again Cauchy-Schwarz inequality,
 $$g_m(x)^2\leq 2^{m}\int_{2^{m-1}}^{2^m}\left|\frac{\partial T_t f}{\partial t}(x)\right|^2 \, dt$$
 we are allowed to get
 \begin{align*}
     \sum_{m=-\infty}^\infty 2^{-\frac{2m}{\nu}\s} \, g_m(x)^2
    &\lesssim  \sum_{m=-\infty}^\infty 2^{(-\frac{2m}{\nu}\s+1)} \int_{2^{m-1}}^{2^m}\left|\frac{\partial T_t f}{\partial t}(x)\right|^2 \, dt\\
     &\lesssim  \sum_{m=-\infty}^\infty 2^{(m-1)(1-\frac{2\s}{\nu})} \int_{2^{m-1}}^{2^m}\left|\frac{\partial T_t f}{\partial t}(x)\right|^2 \, dt
     \\     &\lesssim \int_0^\infty t^{1-\frac{2\s}{\nu}}  \left| \frac{\partial T_t f}{\partial t}(x) \right|^2 \; dt 
 \end{align*}
 and therefore
 \begin{align*}
     \|S_\s^{(2)}f\|_{L^p(\G)}&\lesssim \|(\mathcal{R}_p)^{\s/\nu} f\|_{L^p(\G)}.
 \end{align*} 
\end{proof}

\appendix\label{app}
\section{Appendix: Explicit formula for $\phi_\alpha$ in Euclidean spaces}

\label{appendix-A}

\subsection{Some useful special functions}

In our computations, the following special function $\psi_{\alpha,\beta,\nu,c}:[0,\infty) \to [0,\infty)$ plays 
a key role,
\begin{align}
	\psi_{\alpha,\beta,\nu,c}(r) &=  \int_{1}^\infty   t^{-\beta} \; 
	\exp\left(-c r^{\frac{\nu}{\nu-1}}/ t^{1/(\nu-1)}   \right)  \; (t-1)^{-\alpha}  \; dt \label{definition-psi}
\end{align}
where we assume $0<\alpha<1$ and $\alpha+\beta-1>0$.
By making a change of variables, we can write this integral as 
\begin{align}
	\psi_{\alpha,\beta,\nu,c}(r) 
	&=  \int_{1}^\infty   t^{-\alpha-\beta} \; \exp\left(-c r^{\frac{\nu}{\nu-1}}/ t^{1/(\nu-1)}   \right)  \; \left(1-\frac{1}{t}\right)^{-\alpha}  \; dt \notag \\ 
	&= \int_{0}^1   u^{\alpha+\beta} \; \exp\left(-c r^{\frac{\nu}{\nu-1}} u^{1/(\nu-1)}   \right)  \; \left(1-u\right)^{-\alpha}  \; \frac{1}{u^2} \; du \notag \\
	&= \int_{0}^1   u^{\alpha+\beta-2} \; \exp\left(-c r^{\frac{\nu}{\nu-1}} u^{1/(\nu-1)}   \right)  \;  \left(1-u\right)^{-\alpha}  \; du. 
	\label{definition-psi2}
\end{align}

\begin{lemma}
	We have that
	$$ \psi_{\alpha,\beta,\nu,c}(r)    \lesssim \; (1+r)^{-\nu(\alpha+\beta-1)}. $$
	\label{lemma-psi-bound}
\end{lemma}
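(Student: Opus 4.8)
The plan is to work with the second representation \eqref{definition-psi2},
$$\psi_{\alpha,\beta,\nu,c}(r)=\int_0^1 u^{\alpha+\beta-2}\,\exp\left(-c\,r^{\frac{\nu}{\nu-1}}u^{\frac{1}{\nu-1}}\right)(1-u)^{-\alpha}\,du,$$
and to treat small and large $r$ separately, since the target weight $(1+r)^{-\nu(\alpha+\beta-1)}$ is comparable to $1$ for $r\le 1$ and to $r^{-\nu(\alpha+\beta-1)}$ for $r\ge 1$. For $r\le 1$, I would simply note that the exponential factor is at most $1$, so that $\psi_{\alpha,\beta,\nu,c}(r)\le \psi_{\alpha,\beta,\nu,c}(0)=\int_0^1 u^{\alpha+\beta-2}(1-u)^{-\alpha}\,du=B(\alpha+\beta-1,1-\alpha)$, which is finite precisely because $\alpha+\beta-1>0$ and $1-\alpha>0$; since $(1+r)^{-\nu(\alpha+\beta-1)}$ is bounded below on $[0,1]$, the estimate follows on this range.

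For $r\ge 1$, I would split the $u$-integral at $u=\tfrac12$. On $[0,\tfrac12]$ the factor $(1-u)^{-\alpha}$ is bounded by $2^{\alpha}$, and the crux is the change of variables $w=r^{\frac{\nu}{\nu-1}}u^{\frac{1}{\nu-1}}$, i.e.\ $u=w^{\nu-1}r^{-\nu}$. A direct computation shows that the powers of $r$ combine to exactly $r^{-\nu(\alpha+\beta-1)}$, leaving
$$\int_0^{1/2}u^{\alpha+\beta-2}e^{-c\,r^{\frac{\nu}{\nu-1}}u^{\frac{1}{\nu-1}}}\,du\lesssim r^{-\nu(\alpha+\beta-1)}\int_0^\infty w^{(\nu-1)(\alpha+\beta)-\nu}e^{-cw}\,dw.$$
This last integral is a Gamma integral, and it converges at the origin exactly because $(\nu-1)(\alpha+\beta)-\nu>-1$, which is equivalent to the standing hypothesis $\alpha+\beta>1$; at infinity the exponential guarantees convergence. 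Hence this piece is $\lesssim r^{-\nu(\alpha+\beta-1)}$.

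On the remaining range $[\tfrac12,1]$ I would use that $u^{\frac{1}{\nu-1}}\ge(1/2)^{\frac{1}{\nu-1}}$, so that the exponential is bounded by $e^{-c'r^{\frac{\nu}{\nu-1}}}$ for some $c'>0$, while $u^{\alpha+\beta-2}$ stays bounded and $\int_{1/2}^1(1-u)^{-\alpha}\,du<\infty$ because $\alpha<1$. This contributes a term $\lesssim e^{-c'r^{\frac{\nu}{\nu-1}}}$, which for $r\ge 1$ decays faster than any power and is in particular $\lesssim r^{-\nu(\alpha+\beta-1)}$. Combining the two $u$-pieces with the small-$r$ estimate yields $\psi_{\alpha,\beta,\nu,c}(r)\lesssim(1+r)^{-\nu(\alpha+\beta-1)}$ for all $r\ge 0$. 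The only delicate point is the bookkeeping of exponents in the substitution; there it is essential that both hypotheses are invoked, as $\alpha+\beta-1>0$ makes the Gamma integral converge at the origin (and $\psi_{\alpha,\beta,\nu,c}(0)$ finite), while $0<\alpha<1$ controls the singularity of $(1-u)^{-\alpha}$ near $u=1$.
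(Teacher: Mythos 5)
Your proof is correct and follows essentially the same route as the paper: the same representation \eqref{definition-psi2}, the same split of the $u$-integral at $u=\tfrac12$, the same bound $(1-u)^{-\alpha}\le 2^{\alpha}$ with the substitution $w=c\,r^{\nu/(\nu-1)}u^{1/(\nu-1)}$ producing the Gamma integral and the factor $r^{-\nu(\alpha+\beta-1)}$, and the same exponential bound on $[\tfrac12,1]$. The only difference is that you treat $r\le 1$ explicitly via $\psi_{\alpha,\beta,\nu,c}(r)\le B(\alpha+\beta-1,1-\alpha)$, a point the paper leaves implicit; this is a welcome touch of rigor but not a different argument.
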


\begin{proof}
	We split the last integral into two parts:  $\psi^0_{\alpha,\beta,\nu,c}(r)$ corresponding to $0 \leq u \leq 1$
	and $\psi^1_{\alpha,\beta,\nu,c}(r)$ for $1/2 \leq u \leq 1$.
	
	When $u \geq 1/2$ we use the fact that a negative exponential is decreasing,
	\begin{align*}
		\psi^1_{\alpha,\beta,\nu,c}(r) &= 
		\int_{1/2}^{1}   u^{\alpha+\beta-2} \; 
		\exp\left(-c r^{\frac{\nu}{\nu-1}} u^{1/(\nu-1)}   \right)  \; \left(1-u\right)^{-\alpha}  \; du \\
		&\leq    \exp\left(-c r^{\frac{\nu}{\nu-1}} (1/2)^{1/(\nu-1)}  \right)  \int_{1/2}^{1}   u^{\alpha+\beta-2} \;  \left(1-u\right)^{-\alpha}  \; du \\
		&\lesssim \; \exp\left(- \tilde{c} \cdot r^{\frac{\nu}{\nu-1}}  \right).
	\end{align*}
	The last integral is finite since $\alpha<1$.
	
	When $u \leq 1/2$, then 
	$(1-u)^{-\alpha}\leq 2^\alpha$, and so
	\begin{align*}
		\psi^0_{\alpha,\beta,\nu,c}(r) &= 
		\int_{0}^{1/2}   u^{\alpha+\beta-2} \; 
		\exp\left(-c r^{\frac{\nu}{\nu-1}} u^{1/(\nu-1)}   \right)  \; \left(1-u\right)^{-\alpha}  \; du \\
		&\leq  2^\alpha 
		\int_{0}^{1/2}   u^{\alpha+\beta-1} \; 
		\exp\left(-c r^{\frac{\nu}{\nu-1}} u^{1/(\nu-1)}   \right)   \; \frac{du}{u}.  
	\end{align*}
	By the change of variables $v= c r^{\frac{\nu}{\nu-1}} u^{1/(\nu-1)}$, and since $\alpha+\beta-1>0$, we have 
	\begin{align*}
		\psi^0_{\alpha,\beta,\nu,c}(r) &\lesssim \; \int_0^{c r^{\frac{\nu}{\nu-1}} (1/2)^{1/(\nu-1)}}
		\left( \frac{v}{c r^{\frac{\nu}{\nu-1}}} \right)^{(\alpha+\beta-1)(\nu-1)} \exp(-v)  \; \frac{dv}{v} \\
		&\lesssim \; r^{-\nu(\alpha+\beta-1)} 
		\; \int_0^{\infty}
		v^{(\alpha+\beta-1)(\nu-1)-1} \exp(-v)  \; dv \\
		&\lesssim \; \Gamma((\alpha+\beta-1)(\nu-1)) \, r^{-\nu(\alpha+\beta-1)}. 
	\end{align*}
\end{proof}

When $\nu=2$, $\psi_{\alpha,\beta,\nu,c}$ can be expressed in terms of the confluent hypergeometric function introduced by Kummer in 1837 \cite{Kummer}. 
We will work with the regularized version, which is  
defined by the power series
$$ \varphi(a,b;z) = \sum_{k=0}^\infty \frac{(a)_k}{\Gamma(b+k)} \frac{z^k}{k!} .$$
Here $(a)_k$ denotes the Pochhammer symbol
$$ (a)_0=1, \, \, (a)_k = \frac{\Gamma(a+k)}{\Gamma(k)} = a(a+1)(a+2)\ldots (a+k-1) $$
(see \cite[equations 9.9.1 and 9.9.2]{Lebedev}).  Then, $\varphi(a,b;z)$ is an entire function of $a$ and $b$, for any fixed $z$.
Moreover, this function admits the integral representation 
$$ \varphi(a,b;z) = \frac{1}{\Gamma(a)\Gamma(b-a)}\int_0^1 e^{zu} u^{a-1}(1-u)^{b-a-1}\,du \qquad (\Re(b)>\Re(a)>0)$$
(see \cite[equations 9.9.2 and 9.11.1]{Lebedev}). 

Comparing with \eqref{definition-psi2}, we see that
\be \psi_{\alpha,\beta,2,c}(r) = {\Gamma(\alpha+\beta-1)\Gamma(1-\alpha)} \; \varphi ( \alpha+\beta-1, \beta; {-}cr^2). \label{psi-2} \ee 

We have established this formula in the range $0<\Re(\alpha)<1$, but by analytic continuation 
it follows that it holds for any $\alpha$ with $\Re(\alpha)>0$.

\section{The function $\phi_\alpha$ in Euclidean spaces}

We will give an explicit expression for the function $\phi_\alpha$ introduced in Lemma \ref{phi_properties}, in  the Euclidean space $\R^n$ (and with $\mathcal{R}$ being minus the usual Laplacian operator). In this case, $\nu=2$ and we have the well-known explicit 
formula for the heat kernel
$$ h_t(x) =\frac{1}{(4\pi t)^{n/2}} e^{-|x|^2/4t} = \frac{1}{(4\pi t)^{n/2}} e^{-r^2/4t}  $$ 
where $r=|x|$.

We consider $0<\Re(\alpha)<1$. Then, replacing in \eqref{explicit-phi} (with $m=1$), we get
$$ \phi_\alpha(x) = c(n,\alpha) \left[ - \frac{n}{2} \psi_{\alpha,n/2+1,2,1/4}(r) + \frac{r^2}{4}   \psi_{\alpha,n/2+2,2,1/4}(r)  \right] $$
where
$$ c(n,\alpha) = - \frac{1}{\Gamma(1-\alpha)} (4\pi)^{-n/2}  $$
and $\psi_{\alpha,\beta,\nu,c}$ is the function introduced in \eqref{definition-psi}. Using \eqref{psi-2} we obtain that
\begin{align*} 
	\phi_\alpha(x) &= 
	(4\pi)^{-n/2} 
	\left[  \frac{n}{2}  \, {\Gamma(\alpha+n/2)} \, \varphi(\alpha+n/2,n/2+1;-r^2/4) \right. \\
	& \qquad\left. - {\Gamma(\alpha+1+n/2)} \, \frac{r^2}{4} \, \varphi(\alpha+n/2+1,n/2+2;-r^2/4) \right].
\end{align*}

The confluent hypergeometric function $\varphi(a,b;z)$ can be computed with high precision using the C-library Arb \cite{Johansson,Johansson2017arb}. 
For instance, Figure \ref{figure-phi} shows a graph of $\phi_\alpha(r)$ for some particular values of the parameters, made by using a C program using this library and 
the graphics package Matplotlib \cite{Hunter:2007}. This picture exhibits some of the prominent features of $\phi_\alpha$ in agreement with Lemma \ref{phi_properties}.

\begin{figure}[H]
	\includegraphics[width=0.7\textwidth]{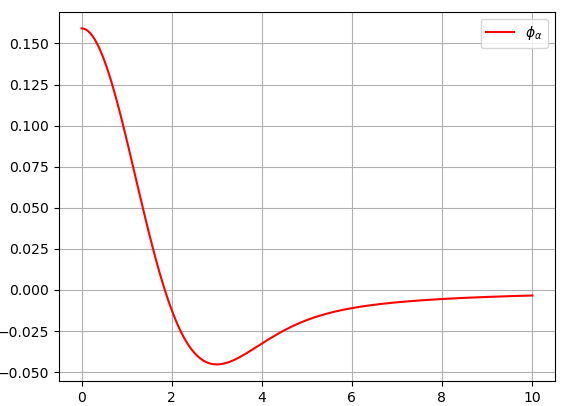}
	\caption{$\phi_\alpha(r)$ with $n=1$ and $\alpha=0.5$.}
	\label{figure-phi}
\end{figure}

\printbibliography
\end{document}